\documentclass[11pt]{article}
\usepackage{srcltx}
\usepackage{eurosym}
\usepackage{mathtools}
\usepackage{amsmath}
\usepackage{amsfonts}
\usepackage{amssymb}
\usepackage{amsthm}
\usepackage{bm}
\usepackage{graphicx}
\usepackage{mathrsfs}
\usepackage{xcolor}
\allowdisplaybreaks
\usepackage{exscale}
\usepackage{color, soul}
\usepackage{latexsym}
\usepackage{authblk}
\usepackage{comment}
\allowdisplaybreaks

\usepackage[colorlinks,plainpages=true,pdfpagelabels,hypertexnames=true,colorlinks=true,pdfstartview=FitV,linkcolor=blue,citecolor=red,urlcolor=black]{hyperref}
\PassOptionsToPackage{unicode}{hyperref}
\PassOptionsToPackage{naturalnames}{hyperref}
\usepackage{enumerate}
\usepackage[shortlabels]{enumitem}
\usepackage{bookmark}
\usepackage{wasysym}
\usepackage{esint}
\usepackage[ddmmyyyy]{datetime}
\usepackage[margin=1in]{geometry}
\makeatletter
\g@addto@macro\normalsize{%
	\setlength\abovedisplayskip{4pt}
	\setlength\belowdisplayskip{4pt}
	\setlength\abovedisplayshortskip{4pt}
	\setlength\belowdisplayshortskip{4pt}
}
\numberwithin{equation}{section}
\everymath{\displaystyle}
\usepackage[capitalize,nameinlink]{cleveref}
\crefname{section}{Section}{Sections}
\crefname{subsection}{Subsection}{Subsections}
\crefname{condition}{Condition}{Conditions}
\crefname{hypothesis}{Hypothesis}{Conditions}
\crefname{assumption}{Assumption}{Assumptions}
\crefname{lemma}{Lemma}{Lemmas}
\crefname{definition}{Definition}{Definitions}
\DeclareMathOperator{\sgn}{sgn}

\crefformat{equation}{\textup{#2(#1)#3}}
\crefrangeformat{equation}{\textup{#3(#1)#4--#5(#2)#6}}
\crefmultiformat{equation}{\textup{#2(#1)#3}}{ and \textup{#2(#1)#3}}
{, \textup{#2(#1)#3}}{, and \textup{#2(#1)#3}}
\crefrangemultiformat{equation}{\textup{#3(#1)#4--#5(#2)#6}}%
{ and \textup{#3(#1)#4--#5(#2)#6}}{, \textup{#3(#1)#4--#5(#2)#6}}%
{, and \textup{#3(#1)#4--#5(#2)#6}}

\Crefformat{equation}{#2Equation~\textup{(#1)}#3}
\Crefrangeformat{equation}{Equations~\textup{#3(#1)#4--#5(#2)#6}}
\Crefmultiformat{equation}{Equations~\textup{#2(#1)#3}}{ and \textup{#2(#1)#3}}
{, \textup{#2(#1)#3}}{, and \textup{#2(#1)#3}}
\Crefrangemultiformat{equation}{Equations~\textup{#3(#1)#4--#5(#2)#6}}%
{ and \textup{#3(#1)#4--#5(#2)#6}}{, \textup{#3(#1)#4--#5(#2)#6}}%
{, and \textup{#3(#1)#4--#5(#2)#6}}

\crefdefaultlabelformat{#2\textup{#1}#3}

\newtheorem{theorem} {Theorem}[section]
\newtheorem{proposition} [theorem]{Proposition}

\newtheorem{lemma}[theorem]{Lemma}

\newtheorem{counter example}[theorem]{Counter Example}
\newtheorem{remark}[theorem] {Remark}
\newtheorem{definition}[theorem] {Definition}

\newtheorem{assumption}[theorem]{Assumption}

\def\CC{{\rm \kern.24em \vrule width.02em height1.4ex depth-.05ex \kern-.26emC}}

\def\TagOnRight

\def\AA{{it I} \hskip-3pt{\tt A}}

\def\QQ{\rlap {\raise 0.4ex \hbox{$\scriptscriptstyle |$}} {\hskip -0.1em Q}}

\makeatletter
\newcommand{\vo}{\vec{o}\@ifnextchar{^}{\,}{}}
\makeatother

\def\YYint#1#2#3{{\setbox0=\hbox{$#1{#2#3}{\iint}$}
		\vcenter{\hbox{$#2#3$}}\kern-.50\wd0}}

\def\XXint#1#2#3{{\setbox0=\hbox{$#1{#2#3}{\int}$}
		\vcenter{\hbox{$#2#3$}}\kern-.50\wd0}}

\makeatletter
\def\namedlabel#1#2{\begingroup
	\def\@currentlabel{#2}%
	\label{#1}\endgroup
}
\makeatother
\makeatletter
\newcommand{\rmh}[1]{\mathpalette{\raisem@th{#1}}}
\newcommand{\raisem@th}[3]{\hspace*{-1pt}\raisebox{#1}{$#2#3$}}
\makeatother

\newcounter{desccount}

\newcommand{\descref}[2]{\hyperref[#1]{\textnormal{\textcolor{black}{}\textcolor{blue}{ #2}\textcolor{black}{}}}}

\newcommand{\dref}[2]{\hyperref[#1]{\textcolor{black}{(}\textcolor{blue}{\bf #2}\textcolor{black}{)}}}
\newcommand{\be} {\begin{eqnarray}}
	\newcommand{\ee} {\end{eqnarray}}
\newcommand{\Bea} {\begin{eqnarray*}}
	\newcommand{\Eea} {\end{eqnarray*}}

\newcommand{\eps} {\epsilon}

\newcounter{whitney}
\refstepcounter{whitney}

\newcounter{ineqcounter}
\refstepcounter{ineqcounter}
\makeatletter
\def\ps@pprintTitle{%
	\let\@oddhead\@empty
	\let\@evenhead\@empty
	\def\@oddfoot{}%
	\let\@evenfoot\@oddfoot}
\makeatother
\usepackage[doublespacing]{setspace}
\usepackage[titletoc,toc,page]{appendix}

\makeatletter
\newcommand{\refcheckize}[1]{%
	\expandafter\let\csname @@\string#1\endcsname#1%
	\expandafter\DeclareRobustCommand\csname relax\string#1\endcsname[1]{%
		\csname @@\string#1\endcsname{##1}\wrtusdrf{##1}}%
	\expandafter\let\expandafter#1\csname relax\string#1\endcsname
}
\makeatother

\refcheckize{\cref}
\refcheckize{\Cref}


\makeatletter
\newcommand{\mainsectionstyle}{%
	\renewcommand{\@secnumfont}{\bfseries}
	\renewcommand\section{\@startsection{section}{2}%
		\z@{.5\linespacing\@plus.7\linespacing}{-.5em}%
		{\normalfont\bfseries}}%
}
\makeatother
\usepackage{pgf,tikz}
\usetikzlibrary{arrows}
\usetikzlibrary{decorations.pathreplacing}
\usepackage{enumitem}    

\usepackage{xpatch}
\xpatchcmd{\MaketitleBox}{\hrule}{}{}{}
\xpatchcmd{\MaketitleBox}{\hrule}{}{}{}

\date{}

\usepackage{scalerel}

\makeatletter

\usepackage{subcaption}
\usepackage{hyperref}
\usepackage{caption}
\usepackage{subcaption}

\usepackage[utf8]{inputenc}
\allowdisplaybreaks

\DeclareCaptionSubType*[Alph]{figure}
\title{Well-posedness for a class of $n\times n$ hyperbolic systems and shock stability for the viscous approximation}
\author[1]{Rahul Barthwal\thanks{\href{mailto:rahul.barthwal@mathematik.uni-stuttgart.de}{rahul.barthwal@mathematik.uni-stuttgart.de}}}
\author[2]{Lilu Sahu\thanks{\href{mailto:lilusahu0123@kgpian.iitkgp.ac.in}{lilusahu0123@kgpian.iitkgp.ac.in}, 
}}
\affil[1]{\footnotesize Institute of Applied Analysis and Numerical Simulation, University of Stuttgart\\

Pfaffenwaldring 57, 70569 Stuttgart, Germany}
\affil[2]{\footnotesize Department of Mathematics, Indian Institute of Technology Kharagpur\\posedness

Kharagpur, 721302 West Bengal, India}

\begin{document}

\maketitle
\begin{abstract}
We establish the global well-posedness of weak solutions to the Cauchy problem for a broad class of multi-dimensional $n\times n$ Keyfitz-Kranzer type systems with homogeneous flux. Our proof relies on a novel viscous approximation that yields the necessary compactness estimates. By exploiting the specific structure of this proposed approximation, we derive a priori uniform $L^{\infty}$ bounds and prove the $L^1_{loc}$ precompactness of the sequence of approximate solutions. This framework allows us to rigorously justify the vanishing viscosity limit. Furthermore, under suitable assumptions on the initial data, we employ the relative entropy method to analyze the $L^2$-time decay of large perturbations of the viscous shock, up to a dynamical shift.
\end{abstract}
{\textbf{Key words.} Keyfitz--Kranzer systems, vanishing viscosity limit, weak solutions, viscous shocks, relative entropy}
\medskip \\
{\textbf{MSC codes.}  35L40 35L45 35L65 35A01  35B40 


\section{Introduction}
Let $T>0$, $0<d\leq 3$, $n\in\mathbb N$, and consider the Cauchy problem for the system
\begin{equation}\label{eq:homogeneous_KK}
\begin{array}{rcll}
\mathbf U_t+\sum_{j=1}^d  \partial_{x_j}\big(\mathbf U V(\mathbf U)\big)=0\quad \text{in }\Omega_T:=\mathbb R^d\times(0,T),\\
\end{array}
\end{equation}
with initial data
\begin{equation}\label{initial_Data}
    \mathbf U(\cdot,0)=\mathbf U_0\quad \text{in }\mathbb R^d.
\end{equation}
In \eqref{eq:homogeneous_KK}, $\mathbf U=(u_1,\ldots,u_n)^\top:
\Omega_T\rightarrow\mathcal U\subset\mathbb R^n,$ is the unknown conserved vector 
defined on a state space $\mathcal{U}$. To make the discussion in the following precise, we make the following assumption on the state space $\mathcal{U}$ and the function $V(\mathbf{U})$.
\begin{assumption}[State space and homogeneous velocity]
\label{assumption_state_space}
Let $\mathcal{G} \subset \mathbb{R}^{n-1}$ be a nonempty, open, and convex set. Define the open convex cone $\mathcal{C}$ as
\[
\mathcal{C} := \left\{ \mathbf{U} = u_1(1, \boldsymbol\theta)^\top \in \mathbb{R}^n : u_1 > 0, \, \boldsymbol\theta \in \mathcal{G} \right\}\subset \mathbb{R}^n,
\]
where $ \boldsymbol\theta :=(\theta_2,\ldots, \theta_n)= \left(\frac{u_2}{u_1}, \dots, \frac{u_n}{u_1}\right).$

Let $V \in C^\infty(\mathcal{C}; (0, \infty))$ be a homogeneous function of degree $k > 0$, such that for all $\lambda > 0$ and $\mathbf{U} \in \mathcal{C}$,
\begin{equation}\label{eq:velocity_homogeneity}
V(\lambda \mathbf{U}) = \lambda^k V(\mathbf{U}).
\end{equation}
Equivalently, defining $g \in C^\infty(\mathcal{G}; (0, \infty))$ by $g(\boldsymbol\theta) := V(1, \boldsymbol\theta)$, we have
\[
V(\mathbf{U}) = u_1^k g(\boldsymbol\theta). \qquad
\]
We restrict our analysis to a uniformly positive state space $\mathcal{U}$. Let $c > 0$ be a constant and $\mathcal{K} \Subset \mathcal{G}$ be a compact subset. We define our state space $\mathcal{U} \subset \mathcal{C}$ as
\[
\mathcal{U} := \left\{ \mathbf{U} = u_1(1, \boldsymbol\theta)^\top \in \mathcal{C} : \,u_1 \geq c, \, \boldsymbol\theta \in \mathcal{K} \right\}.
\]
Since $u_1 \geq c > 0$ and $g$ attains a strictly positive lower bound on the compact set $\mathcal{K}$, the velocity $V$ is uniformly positive on $\mathcal{U}$ such that there exists a constant $V_0 > 0$ with $V(\mathbf{U}) \geq V_0$ for all $\mathbf{U} \in \mathcal{U}$.
\end{assumption}
In view of Assumption \ref{assumption_state_space}, the variables $V$ and $\boldsymbol\theta$ determine the original state $\mathbf{U}$ uniquely. Precisely, $\mathbf{U}=(u_1, u_2, \ldots, u_n)^\top$ is recovered as
\begin{equation}\label{eq:reconstruction_formula}
    u_1
    =\left(\frac{V}{g(\boldsymbol\theta)}\right)^{1/k},
    \qquad
    u_i=\theta_i u_1,
    \quad i=2,\ldots,n.
\end{equation}
Moreover, the homogeneity of $V$ allows us to apply Euler's identity, which gives
\begin{equation}\label{eq:Euler_homogeneous}
    \nabla_{\mathbf U}V(\mathbf U)\cdot\mathbf U
    =kV(\mathbf U).
\end{equation}
This particular identity plays a crucial role in obtaining a precise mathematical structure (see \eqref{eq:V_epsilon_equation}-\eqref{eq:xi_epsilon_equation} below). The mathematical structure \eqref{eq:V_epsilon_equation}-\eqref{eq:xi_epsilon_equation}, transformation \eqref{eq:reconstruction_formula} and the Euler identity \eqref{eq:Euler_homogeneous}, are the basic mechanisms behind the compactness
argument utilized in this article to obtain the global well-posedness of the weak solutions for the Cauchy problem \eqref{eq:homogeneous_KK}-\eqref{initial_Data}.

It is noteworthy that several mathematical
models of physical significance take the form \eqref{eq:homogeneous_KK}. In particular, for $n=1$, homogeneity implies
$V(u)=V(1)u^k$ for $u>0$, and hence, after normalization, the
system \eqref{eq:homogeneous_KK} reduces to a scalar conservation law of the form
\[
u_t+\sum_{j=1}^d\partial_{x_j}(u^{k+1})=0.
\]
In particular, for $n=1$ and $k=1$, \eqref{eq:homogeneous_KK} reduces to the multi-dimensional Burgers equation.
The choice
$
V(\mathbf U)=|\mathbf U|^k
$
produces the homogeneous subclass of the symmetric Keyfitz--Kranzer
systems, which is related to the models arising in elasticity theory and simplified
magnetohydrodynamics; see, e.g., \cite{freistuhler1991rotational, freistuhler1994cauchy, keyfitz1980system}. For two-component systems with $V=u_1$, system \eqref{eq:homogeneous_KK} becomes the simplest gas-dynamic type model \cite{shen2011global, tan1994delta}. It is important to point out that the class \eqref{eq:homogeneous_KK} also contains nonsymmetric hyperbolic systems. For example, system \eqref{eq:homogeneous_KK} with the two-component velocities $V(u_1,u_2)=\frac{u_1u_2}{2}$ or $V(u_1,u_2)=\frac{u_1u_2}{2}+\frac{u_1^2}{3}$ describes the dynamics of first-order models for thin
film flows with soluble anti-surfactant transport and gravity; see, e.g., 
\cite{barthwal2026global, barthwal2023construction, barthwal2025hyperbolic, barthwal2025existence}. In both of these cases, $V$ is a homogeneous function of degree $2$ and thus belongs to the class of hyperbolic systems \eqref{eq:homogeneous_KK}. Thus, system \eqref{eq:homogeneous_KK} provides a unified framework that includes several important classes of problems  that arise in continuum mechanics.

A classical way to construct weak solutions of a system of the form
\begin{equation}\label{general_system}
    \mathbf U_t
    +\sum_{j=1}^d\partial_{x_j}\mathbf F_j(\mathbf U)
    =0
\end{equation}
is to use a viscous approximation of the form
\begin{equation}\label{general_viscous}
    \mathbf U_t^\epsilon
    +\sum_{j=1}^d\partial_{x_j}
       \mathbf F_j(\mathbf U^\epsilon)
    =
    \epsilon\sum_{j=1}^d\partial_{x_j}
    \left(
        \mathbf B(\mathbf U^\epsilon)
        \partial_{x_j}\mathbf U^\epsilon
    \right).
\end{equation}
Here, $\epsilon>0$ and
$\mathbf B:\mathcal U\to\mathbb R^{n\times n}$ is a suitably chosen diffusion
matrix. In several important classes of hyperbolic systems of the form \eqref{general_system}, the standard choice
$\mathbf B=\mathbf I$ provides the estimates required to justify the
vanishing-viscosity limit in the system \eqref{general_viscous}; see, e.g.,
\cite{bianchini_annals,diperna_cc,heibig_1994,Lu_SIMA}. However, for more
delicate nonlinear systems, the diffusion matrix $\mathbf{B}$ must respect the
structure of the convective part of the hyperbolic system. A notable example is the work of
Chen \& Perepelitsa \cite{chen2015vanishing}, where carefully designed
viscous terms are used to construct global finite-entropy
solutions of the compressible Euler equations with spherical symmetry.

By far, global existence of weak (entropy) solutions is well understood for several symmetric Keyfitz--Kranzer systems, particularly when the velocity depends only on
the modulus of the state; see, e.g.,
\cite{ambrosio2005well, lu2002hyperbolic} and references cited therein. However, these results
do not directly provide a vanishing-viscosity construction for a general
positive homogeneous velocity $V(\mathbf U)$, whose level sets need
not be radial. The underlying difficulty for systems of the form \eqref{eq:homogeneous_KK} with a diagonal diffusion matrix $\mathbf{B(U)}=\mathbf{I}$ is that the genuinely nonlinear characteristic field may not yield the necessary compactness estimates required to pass to the vanishing-viscosity limit, especially for the large initial data. This issue becomes particularly non-trivial when the system \eqref{eq:homogeneous_KK} is non-symmetric, as the diffusion with acts componentwise and is not necessarily compatible with the characteristic structure of the hyperbolic part. 

For certain two-component nonsymmetric systems with
$V=\phi(u_1u_2)$, a tailored approximation motivated by lubrication theory was recently introduced by Barthwal et al. \cite{barthwal2026global}. Although the approximation proposed in \cite{barthwal2026global} provides the required estimates for the class of systems with $V=\phi(u_1u_2)$, its dissipation mechanism can not be generalized directly to $n\times n$ systems of the form \eqref{eq:homogeneous_KK}, especially due to a rather general nonlinearity in $V$. This motivates us to introduce a different viscous regularization, adapted directly to the nonlinear field $V$, which retains a similar dissipation mechanism while extending the construction to the broader homogeneous class of systems \eqref{eq:homogeneous_KK}.

More precisely, we propose the following viscous regularization for the system \eqref{eq:homogeneous_KK}.
\begin{equation}\label{eq:homogeneous_viscous_system}
    \mathbf U_t^\epsilon
    +\sum_{j=1}^d\partial_{x_j}
       \bigl(\mathbf U^\epsilon V^\epsilon\bigr)
    =
    \epsilon\sum_{j=1}^d\partial_{x_j}
       \bigl(\mathbf U^\epsilon
       \partial_{x_j}V^\epsilon\bigr),
    \qquad
    V^\epsilon:=V(\mathbf U^\epsilon).
\end{equation}
Equivalently, the diffusion matrix associated with
\eqref{eq:homogeneous_viscous_system} is
\[
\mathbf B(\mathbf U)
=
\mathbf U\otimes\nabla_{\mathbf U}V(\mathbf U).
\]
Since the matrix $\mathbf B(\mathbf U)$ has rank one, the system \eqref{eq:homogeneous_viscous_system} is
a partially diffusive viscous system. Nevertheless, using Euler's identity \eqref{eq:Euler_homogeneous}, one can obtain a scalar parabolic equation for the genuinely nonlinear variable $V^\epsilon$.
This is the key structural feature of the proposed regularization. In particular, it provides a triangular viscous structure for the Riemann invariants $(V^\epsilon, \theta_i^\epsilon), \, i=2, 3, \ldots, n$.

Indeed, multiplying $\nabla_{\mathbf U^\eps}V(\mathbf U^\epsilon)$ to
\eqref{eq:homogeneous_viscous_system} and using
\eqref{eq:Euler_homogeneous}, we obtain (see Lemma \ref{lem:decoupled_system} for detailed calculation)
\begin{equation}\label{eq:V_epsilon_equation}
    V_t^\epsilon
    +(k+1)V^\epsilon
       \sum_{j=1}^d\partial_{x_j}V^\epsilon
    =
    \epsilon\left(
        kV^\epsilon\Delta V^\epsilon
        +|\nabla V^\epsilon|^2
    \right).
\end{equation}
Thus, as long as $V^\epsilon$ remains bounded away from zero,
\eqref{eq:V_epsilon_equation} is a uniformly parabolic equation for every fixed
$\epsilon>0$, with principal coefficient
$\epsilon kV^\epsilon>0$. 

For the variables
\begin{equation*}
    \theta_i^\epsilon:=\frac{u_i^\epsilon}{u_1^\epsilon},
    \qquad i=2,\ldots,n,
\end{equation*}
the equations for $u_i^\epsilon$ and $u_1^\epsilon$ give a transport structure of the form (see Lemma \ref{lem:decoupled_system} for detailed calculation).
\begin{equation}\label{eq:xi_epsilon_equation}
    \partial_t\theta_i^\epsilon
    +\sum_{j=1}^d
       \left(
          V^\epsilon-\epsilon\partial_{x_j}V^\epsilon
       \right)
       \partial_{x_j}\theta_i^\epsilon
    =0,
    \qquad i=2,\ldots,n.
\end{equation}
Hence, the viscous system \eqref{eq:homogeneous_viscous_system} decouples into one pure scalar parabolic
equation for $V^\epsilon$ and $n-1$ linear transport equations for
the variables $\theta_i^\epsilon$. Under some technical assumptions (see Theorem \ref{main_theorem_1}), the parabolic equation \eqref{eq:V_epsilon_equation} yields compactness estimates
of $V^\epsilon$, while the associated transport structure \eqref{eq:xi_epsilon_equation}
provides compactness of $\boldsymbol\theta^\epsilon=(\theta_2^\eps, \theta_3^\eps, \ldots, \theta_n^\eps)^\top$. More precisely, these estimates provide a pair $(V,\boldsymbol\theta)\in L^\infty(\Omega_T; \mathbb{R}^n)\cap BV_{loc}(\Omega_T; \mathbb{R}^n)$ such that $V^\epsilon\to V$ in $L^1_{loc}(\Omega_T)$ and $\boldsymbol\theta^\epsilon\to\boldsymbol\theta$ in $(L^1_{loc}(\Omega_T))^{n-1}$. The
reconstruction formula \eqref{eq:reconstruction_formula}
then yields the strong convergence of $\mathbf U^\epsilon$ to a function $\mathbf{U}=(u_1, u_2, \ldots, u_n)^\top$ in $(L^1_{loc}(\Omega_T))^n$. This strong convergence is sufficient to pass to the limit in the nonlinear flux and thereby construct the global weak solution of the Cauchy problem \eqref{eq:homogeneous_KK}-\eqref{initial_Data}, which constitutes the first main result of the article; see Theorem \ref{main_theorem_1}.

To make the discussion precise, we define the weak solution of the Cauchy problem \eqref{eq:homogeneous_KK}-\eqref{initial_Data} as follows.
\begin{definition}[Weak solution of the Cauchy problem \eqref{eq:homogeneous_KK}-\eqref{initial_Data}]\label{weak_soln}
A function $\mathbf{U} \in L^\infty\big(\Omega_T;\,{\mathbb{R}^n}\big)\cap BV_{loc}\big(\Omega_T;\,{\mathbb{R}^n}\big)$ is called a weak solution of the Cauchy problem \eqref{eq:homogeneous_KK}-\eqref{initial_Data} with initial data 
$\mathbf{U}_0 \in L^\infty(\mathbb{R}^d;\,{\mathbb{R}^n})\cap BV_{loc}\big(\mathbb{R}^d;\,{\mathbb{R}^n}\big)$, if for each vector-valued function $  {\bm \varphi} \in C_0^{1} \big( \Omega_T;\,\mathbb{R}^n\big)$, we have the identity 
\begin{equation*}
    \displaystyle\iint_{\Omega_T} 
    \Big( \mathbf{U}\cdot {\bm \varphi}_t
          + (\mathbf{U} V(\mathbf{U}))\cdot {\nabla \bm \varphi}
         \Big)\,dx\,dt
    + \displaystyle\int_{\mathbb{R}^d} \mathbf{U}_0\cdot {\bm \varphi}(\cdot, 0)\,dx = 0.
\end{equation*}
\end{definition}

The viscous approximation \eqref{eq:homogeneous_viscous_system} is
introduced above as a tool for constructing weak solutions of \eqref{eq:homogeneous_KK}-\eqref{initial_Data}. However, a meaningful
regularization should be able to provide more than just the compactness required
for passing to the limit. In particular, it should also reproduce the nonlinear wave
structure of the underlying hyperbolic system. Precisely, in the
zero-viscosity limit, an
admissible shock discontinuity should be obtained from a smooth travelling-wave profile connecting the corresponding end states. The existence of such a viscous profile shows
that the chosen diffusion is compatible with the shock structure of
the inviscid system, while its stability shows that this viscous
selection mechanism is robust under perturbations. This motivates us to analyze the existence and stability of the viscous shock profiles of the viscous system \eqref{eq:homogeneous_viscous_system} as a second important objective of this article.

The existence and stability of viscous shock profiles have played a central role in the theory of viscous conservation laws and associated hyperbolic systems. Il'in and Oleinik \cite{ilin1960asymptotic} first established the stability of viscous shock waves for scalar viscous conservation laws by exploiting the maximum principle. However, this approach does not extend to systems of conservation laws. In the multi-dimensional setting, Goodman \cite{MR978372} demonstrated the stability of weak shocks using the classical anti-derivative method (see also \cite{sahu2026nonlinear}). Subsequently, Hoff and Zumbrun \cite{MR1793680, MR1919784} extended Goodman's results to large shocks via the Green's function approach. 

More recently, the relative entropy framework to analyze shock stability has been utilized by Vasseur and his collaborators \cite{MR2807139,  serre2016relative, vasseur2008recent}. It has emerged as a powerful tool for analyzing shock stability and contraction properties. Unlike Green's function and Evan function methods, the relative entropy method directly exploits the underlying entropy structure of the governing equations, providing a robust methodology for deriving nonlinear stability estimates. Using this method, Kang \cite{MR3953019} established the $L^2$-contraction property of planar viscous shock waves under large perturbations for the multi-dimensional scalar viscous conservation laws with a linear viscosity coefficient. Later, Kang \& Oh \cite{MR4876608} proved the $L^2$-decay of moderate-strength planar viscous shock waves when the flux function is a small perturbation of the Burgers flux. This result was further extended in \cite{MR4195742} up to a shift and a weight to arbitrary strictly convex flux functions in the multi-dimensional scalar setting, though still restricted to linear viscosity. 

In contrast to the available results, a major mathematical challenge in this article is the careful handling of the nonlinear remainder terms caused by the nonlinear viscosity present in the viscous approximation \eqref{eq:homogeneous_viscous_system}. In particular, the relative entropy estimates require delicate handling. To overcome this challenge, we leverage the uniform bounds of the viscous approximation to successfully adapt the relative entropy approach, allowing us to investigate the $L^2$-decay of perturbations around viscous shock waves for the system \eqref{eq:homogeneous_viscous_system}; see Theorem \ref{main_theorem_2}. The proof of Theorem \ref{main_theorem_2} heavily relies on the mathematical structure of the viscous approximation \eqref{eq:homogeneous_viscous_system}. More precisely, the viscous shocks analyzed in this article are restricted to a subset $\widetilde{\mathcal{U}}$ of the state space $\mathcal{U}$, where end states for $\theta_i^\eps$ are same. This restriction allows us to keep the variables $\theta_i^\eps$ constant along the shock
profile, and the travelling-wave dynamics are governed entirely by the scalar parabolic equation for $V^\epsilon$ \eqref{eq:V_epsilon_equation}. In addition to this, we impose a mild assumption (see Assumption \ref{assumption2}) on the homogeneous velocity $V(\mathbf{U})$ to ensure the $L^2$-contraction of the shock profile. This indicates that the same parabolic–transport decomposition that provides compactness in the vanishing-viscosity limit also governs the structure and stability of the viscous shock. This constitutes the second main result of this article (see Theorem \ref{main_theorem_2}), which demonstrates that the adapted regularization \eqref{eq:homogeneous_viscous_system} is not only an analytical approximation of \eqref{eq:homogeneous_KK}, but also a rigorous mechanism for selecting dynamically stable shock waves. As a direct corollary of this, we establish the stability of the viscous shock for scalar conservation laws with nonlinear viscosity, thereby generalizing the classical results, which are limited to linear viscosity (see Lemma \ref{lem:forW} and Remark \ref{rem:forviscousshock}).

The remainder of the article is structured as follows. In Section \ref{sec: hyperbolicity}, we discuss the basic properties of the system \eqref{eq:homogeneous_KK}.
Next, in Section \ref{sec: viscous}, we propose our novel approximation \eqref{eq:homogeneous_viscous_system}  and deduce several a priori bounds to prove the existence of global smooth solutions of the Cauchy problem for the approximation system \eqref{eq:homogeneous_viscous_system}. In Section \ref{sec: vanishing}, we utilize these a priori bounds to ensure the well-posedness of the global weak solution
for \eqref{eq:homogeneous_KK}-\eqref{initial_Data} and prove Theorem \ref{main_theorem_1} as the first main result of this article. Section \ref{sec: viscous_shock} is devoted to analyzing the existence and stability of viscous shocks for the system \eqref{eq:homogeneous_viscous_system}, which is summarized in Theorem \ref{main_theorem_2} as the second main result of this article. Concluding remarks and future outlook are provided in Section \ref{sec: conclusions}. 
\subsection*{Notations}\label{Notation}
Throughout the article, we use the following standard notation. We denote by $C^\ell(\mathcal{D})$ the space of functions on a domain $\mathcal{D}$ whose derivatives up to order $\ell$ are continuous, where $\ell \in \mathbb{N}_0$. \\
For $\alpha \in (0,1)$, $C_b^\alpha(\mathcal{D};\mathcal{U})$ denotes the space of bounded, $\alpha$-H\"older continuous functions $f \colon \mathcal{D} \to \mathcal{U} \subset \mathbb{R}^m$. This space is endowed with the norm
\[
    {\|f\|}_{C_b^\alpha (\mathcal{D})} = {\|f\|}_{L^\infty(\mathcal{D})} + {[f]}_{C^\alpha(\mathcal{D})},
\]
where the H\"older seminorm is defined as
\[
    {[f]}_{C^\alpha(\mathcal{D})} = \sup_{\substack{x,y \in \mathcal{D} \\ x \neq y}} \frac{|f(x)-f(y)|}{|x-y|^\alpha}.
\]

More generally, for any $\ell \in \mathbb{N}_0$, $C_b^{\ell+\alpha}(\mathcal{D};\mathcal{U})$ denotes the space of $C^\ell$ functions $f \colon \mathcal{D} \to \mathcal{U}$ such that $f$ and all of its partial derivatives up to order $k$ are bounded, and its $\ell$-th order partial derivatives are $\alpha$-H\"older continuous. 

This space is endowed with the norm
\[
    {\|f\|}_{C_b^{\ell+\alpha}(\mathcal{D})} = \sum_{|j| \le \ell} {\|\partial^j f\|}_{L^\infty(\mathcal{D})} + \sum_{|j| = \ell} {[\partial^j f]}_{C^\alpha(\mathcal{D})},
\]
where $j$ is a multi-index.

Further, we denote by $BV_{\mathrm{loc}}(\mathcal{D})$ the space of functions of locally bounded variation. A function $f \in L^1_{\mathrm{loc}}(\mathcal{D})$ belongs to $BV_{\mathrm{loc}}(\mathcal{D})$ if, for every compact subset $K \subset \mathcal{D}$, the total variation of $f$ over $K$ is finite, i.e.,
\[
    {BV}_{\mathrm{loc}}(\mathcal{D}) := \left\{ f \in L^1_{\mathrm{loc}}(\mathcal{D}) : \sup_{\substack{\psi \in C_c^1(K;\mathbb{R}^n) \\ \|\psi\|_{L^\infty(K)} \le 1}} \int_{\mathcal{D}} f(x) \operatorname{div} \psi(x) \, \mathrm{d}x < \infty \quad \text{for all compact } K \subset \mathcal{D} \right\}.
\]
Further, for $p \in [1,\infty]$, we denote the Lebesgue space on $\mathcal{D}$ as $L^p(\mathcal{D})$ and the $L^p(\mathcal{D})$-norm as ${\lVert \cdot \rVert}_{L^p(\mathcal{D})}$, which for a function $f \in L^p(\mathcal{D})$ is defined as
\begin{align*}
    {\lVert f\rVert}_{L^p(\mathcal{D})} = \biggl(\int_{\mathcal{D}} |f(x)|^p \,dx\biggr)^{\frac{1}{p}} ~ \text{for}~p \in[1,\infty),
\end{align*}
and 
\[
{\lVert f\rVert}_{L^\infty(\mathcal{D})}
:=
\operatorname*{ess\,sup}_{x\in \mathcal{D}}|f(x)|.
\]
Finally, for a Banach space $X$ and $T>0$, we denote by $L^p(0, T;X)$, the Bochner space of measurable functions $f:(0,T)\to X$ with an induced norm
\[
    {\lVert f \rVert}_{L^p(0,T;X)}
    = \biggl( \int_0^T {\lVert f(\cdot, t) \rVert}_{X}^p \,\mathrm{d}t \biggr)^{1/p},
\]
for $p \in [1,\infty)$, while for $p = \infty$
\[
{\lVert f \rVert}_{L^{\infty}(0,T;X)}
    = \operatorname*{ess\,sup}_{t\in[0, T]} {\lVert f(\cdot,t) \rVert}_{X}.
\]
For vector-valued functions, these spaces are interpreted componentwise. 

Finally, throughout the article $C > 0$ denotes a generic positive constant independent of time $t>0$ and the smallness parameters $\epsilon$ and $\delta$ that may change from line to line, unless otherwise stated.
\section{Riemann invariant structure and hyperbolicity of the system \eqref{eq:homogeneous_KK}}\label{sec: hyperbolicity}
In this section, we first examine the characteristic structure of the inviscid system
\eqref{eq:homogeneous_KK}. For each $j=1,\ldots,d$, define
\[
\mathbf F_j(\mathbf U):=\mathbf U V(\mathbf U).
\]
Since all spatial fluxes have the same form, their Jacobian matrices
coincide and take the following form
\begin{equation*}\label{eq:flux_jacobian}
    \mathbf A(\mathbf U)
    :=
    D_{\mathbf U}\mathbf F_j(\mathbf U)
    =
    V(\mathbf U)\mathbf I
    +
    \mathbf U\otimes\nabla_{\mathbf U}V(\mathbf U).
\end{equation*}
Therefore, for smooth solutions, the system
\eqref{eq:homogeneous_KK} can be rewritten as
\begin{equation}\label{eq:homogeneous_KK_quasilinear}
    \mathbf U_t
    +
    \mathbf A(\mathbf U)
    \sum_{j=1}^d\partial_{x_j}\mathbf U
    =0.
\end{equation}
For a unit vector
$\boldsymbol\nu=(\nu_1,\ldots,\nu_d)^\top\in\mathbb R^d$, the
directional Jacobian matrix is then defined as
\begin{equation}\label{eq:directional_symbol}
    \mathbf A_{\boldsymbol\nu}(\mathbf U)
    :=
    \sum_{j=1}^d
    \nu_jD_{\mathbf U}\mathbf F_j(\mathbf U)
    =
    \sigma_{\boldsymbol\nu}\mathbf A(\mathbf U),
    \qquad
    \sigma_{\boldsymbol\nu}
    :=
    \sum_{j=1}^d\nu_j.
\end{equation}
In what follows, we prove that the matrix $\mathbf{A_{\nu}(U)}$ is diagonalizable over $\mathbb{R}$ and thus the system \eqref{eq:homogeneous_KK} is hyperbolic \cite{dafermos2005hyperbolic}.
\begin{lemma}[Eigenstructure and hyperbolicity]
\label{lem:eigenstructure_homogeneous_system}
Let Assumption~\ref{assumption_state_space} holds. Then, for
every $\mathbf U\in\mathcal U$, the matrix
$\mathbf A(\mathbf U)$ is diagonalizable over $\mathbb R$ with real eigenvalues given by
\begin{align}
    \lambda_{\mathrm{ld}}(\mathbf U)
    &=
    V(\mathbf U),
    &&\text{with multiplicity }n-1,
    \label{eq:linearly_degenerate_eigenvalue}
    \\
    \lambda_{\mathrm{g}}(\mathbf U)
    &=
    (k+1)V(\mathbf U),
    &&\text{with multiplicity }1.
    \label{eq:genuinely_nonlinear_eigenvalue}
\end{align}
The corresponding right eigenspaces are
\begin{align}
    E_{\mathrm{ld}}(\mathbf U)
    &=
    \ker \nabla_{\mathbf U}V(\mathbf U),
    \label{eq:tangential_eigenspace}
    \\
    E_{\mathrm{g}}(\mathbf U)
    &=
    \operatorname{span}\{\mathbf U\}.
    \label{eq:radial_eigenspace}
\end{align}

More precisely, if $\mathbf e_i$ denotes the $i$-th standard
basis vector of $\mathbb R^n$, then a basis of right eigenvectors of $\mathbf{A(U)}$ is
given by
\begin{equation}\label{eq:explicit_right_eigenvectors}
    \mathbf r_{\mathrm{g}}(\mathbf U)=\mathbf U,
    \qquad
    \mathbf r_i(\mathbf U)
    =
    \mathbf e_i
    -
    \frac{\partial_{u_i}V(\mathbf U)}
         {kV(\mathbf U)}
    \mathbf U,
    \quad i=2,3, \ldots,n.
\end{equation}
Here, $\mathbf r_{\mathrm{g}}$ corresponds to
$\lambda_{\mathrm{g}}$, while
$\mathbf r_2, \mathbf r_3,\ldots,\mathbf r_n$ correspond to
$\lambda_{\mathrm{ld}}$.

For every unit vector $\boldsymbol\nu\in\mathbb R^d$, the eigenvalues
of the directional Jacobian matrix $\mathbf A_{\boldsymbol\nu}$ are given by
\begin{align}
    \lambda_{\mathrm{ld}}^{\boldsymbol\nu}(\mathbf U)
    &=
    \sigma_{\boldsymbol\nu}V(\mathbf U),
    \label{eq:directional_ld_eigenvalue}
    \\
    \lambda_{\mathrm{g}}^{\boldsymbol\nu}(\mathbf U)
    &=
    \sigma_{\boldsymbol\nu}(k+1)V(\mathbf U).
    \label{eq:directional_g_eigenvalue}
\end{align}
In particular, the system \eqref{eq:homogeneous_KK} is hyperbolic.
\end{lemma}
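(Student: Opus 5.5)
The plan is to compute the action of $\mathbf A(\mathbf U)=V(\mathbf U)\mathbf I+\mathbf U\otimes\nabla_{\mathbf U}V(\mathbf U)$ directly on the candidate vectors, using only Euler's identity \eqref{eq:Euler_homogeneous}. Here we read $(\mathbf U\otimes\nabla V)\mathbf w=(\nabla V\cdot\mathbf w)\,\mathbf U$ for any $\mathbf w\in\mathbb R^n$.

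First, for $\mathbf r_{\mathrm g}=\mathbf U$ we have $\mathbf A\mathbf U=V\mathbf U+(\nabla V\cdot\mathbf U)\mathbf U=(k+1)V\mathbf U$ by \eqref{eq:Euler_homogeneous}, which gives $\lambda_{\mathrm g}$. Second, for any $\mathbf w\in\ker\nabla_{\mathbf U}V$ we get $\mathbf A\mathbf w=V\mathbf w$, so every nonzero element of the kernel is an eigenvector for $\lambda_{\mathrm{ld}}=V$. Since $\nabla V\cdot\mathbf U=kV\ge kV_0>0$ on $\mathcal U$, the gradient $\nabla V$ is nonzero. Hence $\ker\nabla V$ has dimension exactly $n-1$, and $\mathbf U\notin\ker\nabla V$, so $\mathbb R^n=\ker\nabla V\oplus\operatorname{span}\{\mathbf U\}$. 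This yields a real eigenbasis, hence diagonalizability. Because $(k+1)V\neq V$ (as $k>0$ and $V>0$), the two eigenvalues are distinct, with algebraic multiplicities $n-1$ and $1$. The eigenspaces are then exactly \eqref{eq:tangential_eigenspace}--\eqref{eq:radial_eigenspace}: each eigenspace has at least these dimensions, and the dimensions already sum to $n$.

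For the explicit basis \eqref{eq:explicit_right_eigenvectors}, we check that $\nabla V\cdot\mathbf r_i=\partial_{u_i}V-\frac{\partial_{u_i}V}{kV}\,kV=0$, again by Euler's identity, so $\mathbf r_i\in\ker\nabla V$. Linear independence of $\mathbf U,\mathbf r_2,\ldots,\mathbf r_n$ follows from their first coordinates. Suppose $a\mathbf U+\sum_i b_i\mathbf r_i=0$. The coordinates $2,\ldots,n$ give $b_i+(\cdot)u_i=0$, and the whole relation reads $\sum b_i\mathbf e_i+c\,\mathbf U=0$ with $c=a-\sum_i b_i\frac{\partial_{u_i}V}{kV}$. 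The first coordinate gives $c\,u_1=0$, and since $u_1\ge c>0$ on $\mathcal U$, we get $c=0$. Then $b_i=0$ for all $i$, and finally $a=0$ because $\mathbf U\neq0$.

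Finally, \eqref{eq:directional_symbol} gives $\mathbf A_{\boldsymbol\nu}=\sigma_{\boldsymbol\nu}\mathbf A$. It therefore shares the same eigenvectors, with eigenvalues scaled by $\sigma_{\boldsymbol\nu}$, and is real diagonalizable for every $\boldsymbol\nu$; this is hyperbolicity. There is no real obstacle in this argument. The only delicate points are to use positivity of $u_1$ and of $V$ on $\mathcal U$, so that $\nabla V\neq0$ and the eigenvalues are distinct, and to note that when $\sigma_{\boldsymbol\nu}=0$ the matrix $\mathbf A_{\boldsymbol\nu}$ is the zero matrix, which is trivially diagonalizable.
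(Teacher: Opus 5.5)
Your proposal is correct and follows essentially the same route as the paper: you use the action $\mathbf A(\mathbf U)\mathbf z=V\mathbf z+(\nabla_{\mathbf U}V\cdot\mathbf z)\mathbf U$, Euler's identity to get $\nabla_{\mathbf U}V\cdot\mathbf U=kV>0$, the splitting $\mathbb R^n=\operatorname{span}\{\mathbf U\}\oplus\ker\nabla_{\mathbf U}V$, and $\mathbf A_{\boldsymbol\nu}=\sigma_{\boldsymbol\nu}\mathbf A$. You also correctly supply details the paper leaves implicit: the distinctness of the two eigenvalues, the dimension count giving equality of the eigenspaces rather than mere inclusion, and the explicit linear-independence check via the first coordinate.
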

\begin{proof}
For any $\mathbf z\in\mathbb R^n$, the action of the flux Jacobian $\mathbf A(\mathbf U)$ on $\mathbf z$ is given by
\begin{equation}\label{eq:A_action}
    \mathbf A(\mathbf U)\mathbf z = V(\mathbf U)\mathbf z + \mathbf U \bigl( \nabla_{\mathbf U}V(\mathbf U)\cdot\mathbf z \bigr).
\end{equation}
Since $V$ is homogeneous of degree $k$, Euler's identity \eqref{eq:Euler_homogeneous} and Assumption~\ref{assumption_state_space} implies that $kV(\mathbf U)>0$ for each $\mathbf{U}\in \mathcal{U}$. This shows that $\nabla_{\mathbf U}V(\mathbf U)\cdot\mathbf U > 0$ and thus $\nabla_{\mathbf U}V(\mathbf U)\neq\mathbf 0$.

Taking $\mathbf z=\mathbf U$ in \eqref{eq:A_action}, we obtain
\[
\mathbf A(\mathbf U)\mathbf U = V(\mathbf U)\mathbf U + \mathbf U \bigl(kV(\mathbf U)\bigr) = (k+1)V(\mathbf U)\mathbf U.
\]
Thus, $\mathbf U$ is a right eigenvector corresponding to the eigenvalue $\lambda_{\mathrm{g}}=(k+1)V(\mathbf U)$, which spans the 1-dimensional eigenspace $E_{\mathrm{g}}(\mathbf U) = \operatorname{span}\{\mathbf U\}$. This proves \eqref{eq:genuinely_nonlinear_eigenvalue} and \eqref{eq:radial_eigenspace}.

Next, let $L:\mathbb R^n\to\mathbb R$ be the linear functional defined by
$L(\mathbf z):=\nabla_{\mathbf U}V(\mathbf U)\cdot\mathbf z$. Since
$\nabla_{\mathbf U}V(\mathbf U)\neq\mathbf 0$ for $\mathbf U\in\mathcal U$,
$L$ is surjective and thus $\operatorname{rank}(L)=1$. Using the
rank--nullity Theorem, we have
\[
\dim\ker\nabla_{\mathbf U}V(\mathbf U) = n-1.
\]
In what follows, we prove that $E_{\mathrm{ld}}(\mathbf U)=\ker\nabla_{\mathbf U}V(\mathbf U)$. For any $\mathbf z\in\ker\nabla_{\mathbf U}V(\mathbf U)$,
\eqref{eq:A_action} simplifies to
\[
\mathbf A(\mathbf U)\mathbf z = V(\mathbf U)\mathbf z.
\] 
This proves \eqref{eq:linearly_degenerate_eigenvalue} and \eqref{eq:tangential_eigenspace}.

Furthermore, since $\nabla_{\mathbf U}V(\mathbf U)\cdot\mathbf U = kV(\mathbf U) > 0$, it immediately follows that
\[
\mathbf U\notin \ker\nabla_{\mathbf U}V(\mathbf U).
\]
This implies
\[
E_{\mathrm{g}}(\mathbf U)\cap E_{\mathrm{ld}}(\mathbf U)=\{\mathbf{0}\}.
\] 
Therefore, we can express the space $\mathbb{R}^n$ as
\[
\mathbb R^n = \operatorname{span}\{\mathbf U\} \oplus \ker\nabla_{\mathbf U}V(\mathbf U),
\]
which proves that $\mathbf A(\mathbf U)$ has a complete basis of eigenvectors and is therefore diagonalizable over $\mathbb R$.

To construct an explicit basis for $\ker\nabla_{\mathbf U}V(\mathbf U)$, we define the vectors $\mathbf r_i(\mathbf U)$ for $i=2,\ldots,n$ as in \eqref{eq:explicit_right_eigenvectors}. Applying Euler's identity \eqref{eq:Euler_homogeneous} again gives
\begin{align*}
\nabla_{\mathbf U}V(\mathbf U)\cdot\mathbf r_i(\mathbf U) &= \partial_{u_i}V(\mathbf U) - \frac{\partial_{u_i}V(\mathbf U)}{kV(\mathbf U)} \bigl( \nabla_{\mathbf U}V(\mathbf U)\cdot\mathbf U \bigr) \\
&= \partial_{u_i}V(\mathbf U) - \frac{\partial_{u_i}V(\mathbf U)}{kV(\mathbf U)} \bigl( kV(\mathbf U) \bigr)=0.
\end{align*}
Hence, $\mathbf r_i\in E_{\mathrm{ld}}(\mathbf U)$ for each $i=2,\ldots,n$ in view of \eqref{eq:tangential_eigenspace}. Further, in view of Assumption \ref{assumption_state_space}, one can easily verify that the set $\{\mathbf{r}_i\}, \, {i=2, 3, \ldots, n}$ is linearly independent for $\mathbf{U}\in \mathcal{U}$ and thus forms a basis for $\ker\nabla_{\mathbf U}V(\mathbf U)$.

Now, for each $\boldsymbol\nu=(\nu_1,\ldots,\nu_d)^\top\in\mathbb R^d$, the eigenvalues of the directional Jacobian  $\mathbf{A}_\nu$ are $\sigma_{\mathbf{\nu}}$ multiple of eigenvalues of $\mathbf{A}(\mathbf{U})$ in view of \eqref{eq:directional_symbol}. This proves \eqref{eq:directional_ld_eigenvalue}-\eqref{eq:directional_g_eigenvalue}. Moreover, using \eqref{eq:directional_symbol}, the right eigenspaces are the same as those of the matrix $\mathbf{A}(\mathbf{U})$. Hence, the system \eqref{eq:homogeneous_KK} is hyperbolic. This completes the proof of the Lemma.
\end{proof}
For a given unit vector $\boldsymbol\nu=(\nu_1,\ldots,\nu_d)^\top\in\mathbb R^d$ and for every $\mathbf{r}_i\in E_{\text{ld}}(\mathbf{U})$, we have
\begin{align*}
    \nabla_{\mathbf{U}}\lambda^{\boldsymbol\nu}_{\text{ld}}(\mathbf{U}).\mathbf{r}_i = \sigma_{\boldsymbol\nu}\nabla_{\mathbf{U}}V(\mathbf{U}).\mathbf{r}_i = 0.
\end{align*}
Thus, characteristic fields corresponding to the eigenvalue $\lambda^{{\boldsymbol\nu}}_{\text{ld}}(\mathbf{U})$ are linearly degenerate. Meanwhile, we compute
\begin{align*}
    \nabla_{\mathbf{U}}\lambda^{{\boldsymbol\nu}}_{\text{g}}(\mathbf{U}).\mathbf{r}_{\text{g}} &= \sigma_{\boldsymbol\nu}(k+1)\nabla_{\mathbf{U}}V(\mathbf{U}).\mathbf{U}\\
    & = \sigma_{\boldsymbol\nu}k(k+1)V(\mathbf{U}) \neq 0 \quad \text{in}~~\mathcal{U}.
\end{align*}
Therefore, the characteristic field corresponding to the eigenvalue $\lambda^{{\boldsymbol\nu}}_{\text{g}}(\mathbf{U})$ is genuinely nonlinear. We now identify the associated Riemann invariants. Since the eigenspaces of $\mathbf{A}_{\boldsymbol\nu}$ are independent of $\boldsymbol\nu$, the Riemann invariant structure remains identical in every direction. Thus, we restrict to the one-dimensional case. 

By definition, a smooth function $\mathbf{\Pi}(\mathbf{U})$ is a Riemann invariant corresponding to a right eigenvector $\mathbf{r}(\mathbf{U})$ if 
\[
\nabla_{\mathbf{U}} \mathbf{\Pi}(\mathbf{U})\cdot \mathbf{r}(\mathbf{U})=0.
\]
Therefore, one can easily obtain the complete set of Riemann invariants for the system \eqref{eq:homogeneous_KK} as 
\begin{align}
    \Pi_1 = V(\mathbf{U}), \quad \Pi_i = \theta_i = \frac{u_i}{u_1}, \quad \text{for}~~i = 2,\ldots,n.
\end{align}
 
\section{An admissible approximation of the  system \eqref{eq:homogeneous_KK}}\label{sec: viscous}
We introduce an admissible approximation adapted to the structure of \eqref{eq:homogeneous_KK} in this Section. The approximation is designed to provide the dissipation estimates required for the vanishing-viscosity limit (see Section \ref{sec: vanishing}) and an invariant-region structure for the inviscid system \eqref{eq:homogeneous_KK}. The proposed viscous approximation for the system \eqref{eq:homogeneous_KK} is given by
\begin{equation}\label{eq:homogeneous_KK_viscos_2}
  \mathbf{U}_t^\epsilon + \sum_{j=1}^d\partial_{x_j} \bigl(\mathbf{U}^\epsilon V^\epsilon\bigr) = \epsilon\sum_{j=1}^d\partial_{x_j} \bigl(\mathbf{U}^\epsilon \partial_{x_j}V^\epsilon\bigr), \qquad V^\epsilon:=V(\mathbf{U}^\epsilon),
\end{equation}
subject to the initial data 
\begin{align}\label{eq:homogeneous_KK_viscos_initial_data}
\mathbf{U}^\epsilon(\cdot, 0) = \mathbf{U}^\epsilon_0 \quad \text{in } \mathbb{R}^d.
\end{align}
In \eqref{eq:homogeneous_KK_viscos_initial_data}, $\mathbf{U}^\epsilon_0$ is obtained by regularizing the initial data $\mathbf{U}_0$ defined in \eqref{initial_Data} by convolving with the Friedrichs mollifier. Precisely, 
\[
\mathbf{U}_0^\epsilon = j^\epsilon * \mathbf{U}_0.
\]
Here $j^\epsilon$ is a mollifier satisfying
\[
j^\epsilon(x) = \frac{1}{\epsilon^d}j\left(\frac{x}{\epsilon}\right),
\]
with
\[
j(x) = \begin{cases}
    \dfrac{1}{A}\exp\left\{\dfrac{1}{|x|^2-1}\right\}, & \text{if } |x| < 1, \\[2ex]
    0, & \text{if } |x| \geq 1,
\end{cases}
\]
and $A$ is chosen such that $\int_{\mathbb{R}^d} j(x) \, dx = 1$. One can verify that $\{\mathbf{U}^\epsilon_0\}_{\epsilon > 0} \subset C^\infty(\mathbb{R}^d; \mathcal{U}) \cap L^\infty(\mathbb{R}^d; \mathcal{U})$ satisfies for $\epsilon \to 0$ (see \cite{Evans})
\[
\mathbf{U}_0^\epsilon \to \mathbf{U}_0 \quad \text{a.e. and in } L^p_{\text{loc}}(\mathbb{R}^d) \text{ for } p \in [1, \infty).
\]
In the following Lemma, we first prove that the system \eqref{eq:homogeneous_KK_viscos_2} can be converted into a triangular form using the Riemann invariants $(V^\eps, \bm{\theta}^\eps)$.
\begin{lemma}\label{lem:decoupled_system}
Let $\mathbf U^\epsilon$ be a smooth solution of the Cauchy problem
\eqref{eq:homogeneous_KK_viscos_2}-\eqref{eq:homogeneous_KK_viscos_initial_data} with $\mathbf{U}^\eps\in \mathcal{U}$. Then the functions
$V^\epsilon$ and $\theta_i^\epsilon=\frac{u_i^\epsilon}{u_1^\epsilon},
\, \,i=2,\ldots,n,$ satisfy
\begin{align}
V^\epsilon_t
+
(k+1)V^\epsilon
\sum_{j=1}^d\partial_{x_j}V^\epsilon
&=
\epsilon\left(
kV^\epsilon\Delta V^\epsilon
+
|\nabla V^\epsilon|^2
\right),
\label{eq:scalar_velocity_equation_main}\\
\theta^\epsilon_{it}
+
\sum_{j=1}^d
\left(
V^\epsilon-\epsilon\partial_{x_j}V^\epsilon
\right)
\partial_{x_j}\theta_i^\epsilon
&=0,
\qquad i=2,\ldots,n.
\label{eq:ratio_transport_equation_main}
\end{align}
\end{lemma}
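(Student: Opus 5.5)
Both identities follow from rewriting the system in a convenient form and then contracting it with suitable covectors. Introduce the effective transport velocity $W_j^\epsilon := V^\epsilon - \epsilon\partial_{x_j}V^\epsilon$. The system \eqref{eq:homogeneous_KK_viscos_2} can then be written as the family of continuity equations
\[
\mathbf U^\epsilon_t + \sum_{j=1}^d \partial_{x_j}\bigl(\mathbf U^\epsilon W_j^\epsilon\bigr) = 0 .
\]
Each component $u_i^\epsilon$ is transported by the same vector field $(W_1^\epsilon,\dots,W_d^\epsilon)$. This common-velocity structure is what the proof exploits.

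For \eqref{eq:ratio_transport_equation_main}, use that $u_1^\epsilon\ge c>0$ on $\mathcal U$, so $\theta_i^\epsilon$ is smooth. Write $u_i^\epsilon=\theta_i^\epsilon u_1^\epsilon$ and expand:
\[
\partial_t(\theta_i^\epsilon u_1^\epsilon)+\sum_j\partial_{x_j}(\theta_i^\epsilon u_1^\epsilon W_j^\epsilon)
= \theta_i^\epsilon\Bigl(u_{1t}^\epsilon+\sum_j\partial_{x_j}(u_1^\epsilon W_j^\epsilon)\Bigr) + u_1^\epsilon\Bigl(\theta_{it}^\epsilon+\sum_j W_j^\epsilon\partial_{x_j}\theta_i^\epsilon\Bigr).
\]
The left side vanishes, and the first bracket on the right is the $u_1$-equation, so it also vanishes. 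Dividing by $u_1^\epsilon>0$ gives \eqref{eq:ratio_transport_equation_main}.

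For \eqref{eq:scalar_velocity_equation_main}, apply the chain rule, $V^\epsilon_t=\nabla_{\mathbf U}V(\mathbf U^\epsilon)\cdot\mathbf U^\epsilon_t$, and take the dot product of the conservative form with $\nabla_{\mathbf U}V$. Expanding the divergence gives
\[
\partial_{x_j}(\mathbf U^\epsilon W_j^\epsilon)=W_j^\epsilon\,\partial_{x_j}\mathbf U^\epsilon+\mathbf U^\epsilon\,\partial_{x_j}W_j^\epsilon .
\]
Contracting each piece with $\nabla_{\mathbf U}V$:
\begin{itemize}
\item $\nabla_{\mathbf U}V\cdot\partial_{x_j}\mathbf U^\epsilon=\partial_{x_j}V^\epsilon$ by the chain rule;
\item $\nabla_{\mathbf U}V\cdot\mathbf U^\epsilon=kV^\epsilon$ by Euler's identity \eqref{eq:Euler_homogeneous}.
\end{itemize}
Hence
\[
V^\epsilon_t+\sum_j\bigl(W_j^\epsilon\partial_{x_j}V^\epsilon+kV^\epsilon\partial_{x_j}W_j^\epsilon\bigr)=0 .
\]
Substituting $W_j^\epsilon=V^\epsilon-\epsilon\partial_{x_j}V^\epsilon$, the inviscid parts combine to $(k+1)V^\epsilon\sum_j\partial_{x_j}V^\epsilon$. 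The viscous parts are
\[
-\epsilon\sum_j\bigl((\partial_{x_j}V^\epsilon)^2+kV^\epsilon\partial_{x_j}^2V^\epsilon\bigr)=-\epsilon\bigl(|\nabla V^\epsilon|^2+kV^\epsilon\Delta V^\epsilon\bigr),
\]
and moving them to the right-hand side yields \eqref{eq:scalar_velocity_equation_main}.

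The main point of care is bookkeeping: the transport velocity $W_j^\epsilon$ depends on the index $j$, so the sums over $j$ must be tracked term by term, and each $\partial_{x_j}W_j^\epsilon$ produces exactly one second derivative $\partial_{x_j}^2V^\epsilon$. Everything else is the chain rule plus Euler's identity, with positivity of $u_1^\epsilon$ guaranteed by $\mathbf U^\epsilon\in\mathcal U$.
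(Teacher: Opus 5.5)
Your proposal is correct and follows essentially the same route as the paper: you write the system with the common transport velocity $V^\epsilon-\epsilon\partial_{x_j}V^\epsilon$ (exactly the coefficient in the paper's expanded form), contract with $\nabla_{\mathbf U}V$ and use Euler's identity for the $V^\epsilon$-equation, and use the shared velocity of all components to cancel the source term in the $\theta_i^\epsilon$-equation. The only differences are cosmetic: you apply the product rule to the conservative form where the paper applies the quotient rule to the non-conservative form, and your symbol $W_j^\epsilon$ collides with the paper's later $W^\epsilon=(V^\epsilon)^{1/k}$.
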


\begin{proof}
Expanding the two divergence terms in
\eqref{eq:homogeneous_KK_viscos_2}, we can express \eqref{eq:homogeneous_KK_viscos_2} as
\begin{equation}\label{eq:expanded_approximate_system}
\mathbf U^\epsilon_t
+
\sum_{j=1}^d
\left(
V^\epsilon-\epsilon\partial_{x_j}V^\epsilon
\right)
\partial_{x_j}\mathbf U^\epsilon
=
\mathbf U^\epsilon
\left(
\epsilon\Delta V^\epsilon
-
\sum_{j=1}^d\partial_{x_j}V^\epsilon
\right).
\end{equation}

Taking the scalar product of
\eqref{eq:expanded_approximate_system} with
$\nabla_{\mathbf U^\eps}V(\mathbf U^\epsilon)$ and using
\[
V^\epsilon_t
=
\nabla_{\mathbf U^\eps}V(\mathbf U^\epsilon)
\cdot\mathbf U^\epsilon_t,
\qquad
\partial_{x_j}V^\epsilon
=
\nabla_{\mathbf U^\eps}V(\mathbf U^\epsilon)
\cdot\partial_{x_j}\mathbf U^\epsilon,
\]
we obtain
\begin{align}\label{V_first}
V^\epsilon_t
+
\sum_{j=1}^d
\left(
V^\epsilon-\epsilon\partial_{x_j}V^\epsilon
\right)
\partial_{x_j}V^\epsilon
&=
\left(
\nabla_{\mathbf U^\eps}V(\mathbf U^\epsilon)
\cdot\mathbf U^\epsilon
\right)
\left(
\epsilon\Delta V^\epsilon
-
\sum_{j=1}^d\partial_{x_j}V^\epsilon
\right).
\end{align}
Using \eqref{eq:Euler_homogeneous}, \eqref{V_first} reduces to
\begin{align}\label{V_second}
V^\epsilon_t
+
V^\epsilon\sum_{j=1}^d\partial_{x_j}V^\epsilon
-
\epsilon|\nabla V^\epsilon|^2
&=
\epsilon kV^\epsilon\Delta V^\epsilon
-
kV^\epsilon
\sum_{j=1}^d\partial_{x_j}V^\epsilon.
\end{align}
Hence, rearranging \eqref{V_second} yields \eqref{eq:scalar_velocity_equation_main}.

For $i=2,\ldots,n$, the componentwise form of
\eqref{eq:expanded_approximate_system} gives
\[
u^\epsilon_{it}
+
\sum_{j=1}^d
\left(
V^\epsilon-\epsilon\partial_{x_j}V^\epsilon
\right)
\partial_{x_j}u_i^\epsilon
=
u_i^\epsilon
\left(
\epsilon\Delta V^\epsilon
-
\sum_{j=1}^d\partial_{x_j}V^\epsilon
\right).
\]
Thus, we directly compute
\begin{align*}
&\theta^\epsilon_{it}
+
\sum_{j=1}^d
\left(
V^\epsilon-\epsilon\partial_{x_j}V^\epsilon
\right)
\partial_{x_j}\theta_i^\epsilon\\
&\quad=
\frac{1}{(u_1^\epsilon)^2}
\Bigg[
u_1^\epsilon
\left(
u^\epsilon_{it}
+
\sum_{j=1}^d
\left(
V^\epsilon-\epsilon\partial_{x_j}V^\epsilon
\right)
\partial_{x_j}u_i^\epsilon
\right)-
u_i^\epsilon
\left(
u^\epsilon_{1t}
+
\sum_{j=1}^d
\left(
V^\epsilon-\epsilon\partial_{x_j}V^\epsilon
\right)
\partial_{x_j}u_1^\epsilon
\right)
\Bigg].
\end{align*}
Both terms on the right-hand side contain the common factor $\epsilon\Delta V^\epsilon
-
\sum_{j=1}^d\partial_{x_j}V^\epsilon$ and therefore cancel. This proves
\eqref{eq:ratio_transport_equation_main}.
\end{proof}
With this mathematical structure in place, we now prove that the Cauchy problem \eqref{eq:homogeneous_KK_viscos_2}-\eqref{eq:homogeneous_KK_viscos_initial_data} has a unique global-in-time solution $\mathbf{U}^\eps$ and an $\epsilon$-independent invariant domain.
\begin{proposition}[Global well-posedness of the Cauchy problem \eqref{eq:homogeneous_KK_viscos_2}-\eqref{eq:homogeneous_KK_viscos_initial_data} and an invariant domain]\label{lem:global_viscous}
Let $\epsilon>0$, $T>0$, and $\alpha\in(0,1)$ and suppose that $\mathbf U^\epsilon_0
\in
C_b^{2+\alpha}(\mathbb R^d;\mathcal{U})$ with
\begin{align}\label{eq: U0bound}
    \mathbf U_0^\epsilon \in [m, M]^n \subset \mathcal{U}, \quad 0<m<M .
\end{align}
Then the Cauchy problem \eqref{eq:homogeneous_KK_viscos_2}-\eqref{eq:homogeneous_KK_viscos_initial_data} admits a unique classical solution on $\mathbb R^d\times[0,T]$. More precisely,
\[
\mathbf U^\epsilon
\in
C\bigl([0,T];C_b^{1+\alpha}(\mathbb R^d;\mathcal{U})\bigr)
\cap
C^1\bigl([0,T];C_b^\alpha(\mathbb R^d;\mathcal{U})\bigr),
\]
and
\[
V^\epsilon:=V(\mathbf U^\epsilon)
\in
C\bigl([0,T];C_b^{2+\alpha}(\mathbb R^d)\bigr)
\cap
C^1\bigl([0,T];C_b^\alpha(\mathbb R^d)\bigr).
\]
Define
\begin{equation}\label{eq:constantm_0M_0}
    m_0:=\min_{\mathbf U_0^\eps\in{[m,M]}^n}V(\mathbf U_0^\eps),\qquad M_0:=\max_{\mathbf U_0^\eps\in{[m,M]}^n}V(\mathbf U_0^\eps).
\end{equation}
Then
\[
0<m_0
\leq V(\mathbf U^\epsilon(x,t))
\leq M_0<\infty\, \quad \forall\, (x,t)\in\mathbb R^d\times[0,T].
\]
Moreover, there exist $m_*, M_*>0$ independent of $\epsilon$ such that
\[
\mathbf U^\epsilon\in {[m_*, M_*]}^n\, \quad \forall\, (x,t)\in\mathbb R^d\times[0,T].
\]
In particular, the set 
\begin{align}\label{invariant_domain}
    \mathcal{I}=\bigg\{\mathbf{U}^\epsilon\in \mathcal{U}:\, m_0\leq V(\mathbf{U}^\epsilon)\leq M_0, \quad \dfrac{m}{M}\leq \theta_i\leq\dfrac{M}{m}\bigg\}
\end{align}
is an invariant domain for \eqref{eq:homogeneous_KK_viscos_2}.
\end{proposition}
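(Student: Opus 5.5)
The plan is to work entirely in the Riemann-invariant variables $(V^\epsilon,\boldsymbol\theta^\epsilon)$ supplied by Lemma~\ref{lem:decoupled_system}, build the solution there, and then recover $\mathbf U^\epsilon$ through \eqref{eq:reconstruction_formula}. Set $V_0^\epsilon:=V(\mathbf U_0^\epsilon)$ and $\boldsymbol\theta_0^\epsilon:=(u_{0,i}^\epsilon/u_{0,1}^\epsilon)_{i\ge2}$. Because $V$ and the quotient map are smooth on $\mathcal U$, both belong to $C_b^{2+\alpha}(\mathbb R^d)$. By \eqref{eq: U0bound} and \eqref{eq:constantm_0M_0} we have $m_0\le V_0^\epsilon\le M_0$ and $m/M\le\theta_{0,i}^\epsilon\le M/m$.

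\textbf{Step 1 (the scalar equation).} The first step is to solve the quasilinear scalar equation \eqref{eq:scalar_velocity_equation_main}, which is decoupled from $\boldsymbol\theta^\epsilon$. To keep it uniformly parabolic, truncate the diffusion coefficient and replace $\epsilon kV$ by $\epsilon k\max\{V,m_0/2\}$. Classical quasilinear parabolic theory on $\mathbb R^d$ with $C_b^{2+\alpha}$ data (Ladyzhenskaya--Solonnikov--Ural'tseva type results) then gives a local classical solution in $C([0,T_1];C_b^{2+\alpha})\cap C^1([0,T_1];C_b^\alpha)$.

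\textbf{Step 2 (maximum principle).} The structure of \eqref{eq:scalar_velocity_equation_main} yields an a priori bound. At an interior spatial maximum of $V^\epsilon$ we have $\nabla V^\epsilon=0$ and $\Delta V^\epsilon\le0$, so $V^\epsilon_t\le 0$; at a minimum the signs reverse. A rigorous version on the unbounded domain $\mathbb R^d$ uses the standard device of comparing with $M_0+\delta(1+|x|^2)e^{Ct}$, or of applying the Phragmén--Lindelöf form of the weak maximum principle for bounded classical solutions. Either way one gets $m_0\le V^\epsilon\le M_0$, so the truncation is never active. Combining this $L^\infty$ bound with the Schauder and gradient estimates for fixed $\epsilon$ prevents blow-up of the $C^{2+\alpha}$ norm, and a continuation argument extends the solution to $[0,T]$. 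Uniqueness follows from the comparison principle for the scalar equation.

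\textbf{Step 3 (the transport equations).} With $V^\epsilon$ known, each $\theta_i^\epsilon$ solves the linear transport equation \eqref{eq:ratio_transport_equation_main}. Its velocity field $b_j=V^\epsilon-\epsilon\partial_{x_j}V^\epsilon$ is bounded and lies in $C^{1+\alpha}$ in $x$, hence is globally Lipschitz. The characteristic ODE $\dot X_j=b_j(X,t)$ is therefore globally well posed, and
\[
\theta_i^\epsilon(x,t)=\theta_{0,i}^\epsilon\bigl(X^{-1}(x,t)\bigr).
\]
This immediately gives $m/M\le\theta_i^\epsilon\le M/m$ and $\theta_i^\epsilon\in C([0,T];C_b^{1+\alpha})\cap C^1([0,T];C_b^\alpha)$. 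Here regularity is lost relative to $V^\epsilon$, which explains the $C^{1+\alpha}$ class stated for $\mathbf U^\epsilon$.

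\textbf{Step 4 (reconstruction and equivalence).} Define $\mathbf U^\epsilon$ by \eqref{eq:reconstruction_formula}. Two facts are needed:
\begin{itemize}
\item[(i)] Reversing the computation of Lemma~\ref{lem:decoupled_system} shows that $\mathbf U^\epsilon$ solves \eqref{eq:homogeneous_KK_viscos_2}. One checks that $u_1^\epsilon$ satisfies its component of \eqref{eq:expanded_approximate_system}, using Euler's identity and $V=u_1^kg(\boldsymbol\theta)$; the $u_i^\epsilon=\theta_i^\epsilon u_1^\epsilon$ components then follow from the $\theta$ equations. Conversely, any classical solution in $\mathcal U$ generates such a pair, which gives uniqueness.
\item[(ii)] The bounds are uniform: $u_1^\epsilon=(V^\epsilon/g(\boldsymbol\theta^\epsilon))^{1/k}$ lies between $(m_0/\max g)^{1/k}$ and $(M_0/\min g)^{1/k}$, with $g$ evaluated on the compact $\theta$-box. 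Multiplying by the $\theta$ bounds then yields $m_*,M_*$ independent of $\epsilon$.
\end{itemize}
This also gives the invariance of $\mathcal I$.

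\textbf{Expected obstacle.} The main difficulty is Step 2: making the maximum principle and the global continuation rigorous for the degenerate-looking quasilinear operator $\epsilon kV\Delta V+\epsilon|\nabla V|^2$ on the whole space. This includes justifying that the higher-norm estimates for fixed $\epsilon$ do not blow up. A further subtlety is that the $\theta$-box must remain inside $\mathcal K$ for $g$ to be bounded below; this is implicitly guaranteed by the hypothesis $[m,M]^n\subset\mathcal U$ together with convexity.
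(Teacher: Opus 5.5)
Your plan follows the paper's proof. You reduce to $(V^\epsilon,\boldsymbol\theta^\epsilon)$, get $m_0\le V^\epsilon\le M_0$ from a maximum principle plus continuation, solve the $\theta_i^\epsilon$ equations by characteristics, and reconstruct via \eqref{eq:reconstruction_formula}. Your deviations are sound:
\begin{itemize}
\item The maximum principle applies to $V^\epsilon$ directly, because $\epsilon|\nabla V^\epsilon|^2$ vanishes at extrema. The paper's passage to $Q^\epsilon=(V^\epsilon)^{(k+1)/k}$ is therefore not needed.
\item The truncation makes the bounds available on the whole maximal interval, whereas the paper only writes them on $[0,\tau_1]$.
\item Your item (i) actually proves the converse of Lemma~\ref{lem:decoupled_system}, which the paper merely invokes.
\end{itemize}
Two of your claims, however, are wrong as stated.

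\textbf{The $\boldsymbol\theta$-box.} The box $[m/M,M/m]^{n-1}$ need not lie in $\mathcal G$, and convexity does not rescue it. Take $n=3$. Every ratio vector $\boldsymbol\theta(\mathbf U)$ with $\mathbf U\in[m,M]^3$ satisfies $\theta_2-\theta_3=(u_2-u_3)/u_1\le (M-m)/m$. This is a linear constraint, so it also holds on the convex hull of $\boldsymbol\theta([m,M]^3)$. The corner $(M/m,m/M)$, however, has $\theta_2-\theta_3=(M-m)(M+m)/(mM)>(M-m)/m$. So a convex $\mathcal G$ may contain all attainable ratios but not the box, and then $\min g$ over the box is undefined. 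The paper's $\nu_\pm$, taken over $I_M$, has the same defect.

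The fix is already in your Step 3. All $\theta_i^\epsilon$ are transported by the same field, so $\boldsymbol\theta^\epsilon(x,t)=\boldsymbol\theta^\epsilon_0(y)$, where $y$ is the foot of the characteristic through $(x,t)$. Hence $\boldsymbol\theta^\epsilon(x,t)\in\boldsymbol\theta([m,M]^n)\subset\mathcal K$. Take the extrema of $g$ over $\mathcal K$, or over the compact set $\boldsymbol\theta([m,M]^n)$. This also makes $m_*,M_*$ visibly independent of $\epsilon$.

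\textbf{Invariance of $\mathcal I$.} Your sentence ``this also gives the invariance of $\mathcal I$'' is not justified, and neither is the $\mathcal U$-valuedness asserted in the statement; the paper's proof omits this too. Membership in $\mathcal U$ requires $u_1^\epsilon\ge c$, but you only obtain $u_1^\epsilon\ge(m_0/\max g)^{1/k}$, which can be below $c$.

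Indeed, your computation in (i) gives
\[
\bigl(\partial_t+\sum_j(V^\epsilon-\epsilon\partial_{x_j}V^\epsilon)\partial_{x_j}\bigr)u_1^\epsilon=u_1^\epsilon\bigl(\epsilon\Delta V^\epsilon-\sum_j\partial_{x_j}V^\epsilon\bigr),
\]
and the right-hand side has no sign. Suppose $u^\epsilon_{1,0}\equiv m=c$ and $\epsilon\Delta V^\epsilon_0<\sum_j\partial_{x_j}V^\epsilon_0$ at some point. Then $\partial_t u_1^\epsilon<0$ there at $t=0$, so $u_1^\epsilon$ drops below $c$ immediately.

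What your argument, and the paper's, really proves is $\mathbf U^\epsilon\in\mathcal C\cap[m_*,M_*]^n$ with $m_0\le V^\epsilon\le M_0$ and $\boldsymbol\theta^\epsilon\in\mathcal K$. The constraint $u_1\ge c$ is not invariant in general and has to be relaxed.
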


\begin{proof}
From Lemma \ref{lem:decoupled_system}, it is evident that the considered viscous approximated system \eqref{eq:homogeneous_KK_viscos_2} can be recasted in the variable $V^\epsilon$ and $\theta_{i}^\epsilon:=\frac{u_i^\epsilon}{u_1}, \, i=2, 3, \ldots, n$ as \eqref{eq:scalar_velocity_equation_main}-\eqref{eq:ratio_transport_equation_main}.
Further, we define the corresponding initial data
\begin{align}
V^\epsilon_0(x)&=V(\mathbf U^\epsilon_0(x)), \label{eq:V_initial_data}\\
\theta_{i, 0}^\epsilon(x)&=\dfrac{u_{i, 0}^\epsilon(x)}{u_{1, 0}^\epsilon(x)}\label{eq:xi_initial_data}.
\end{align}
The composition theorem for Hölder functions gives
\[
V^\epsilon_0
\in C_b^{2+\alpha}(\mathbb R^d).
\]
Now define
\begin{align}\label{eq: VtoQ}
Q^\epsilon
:=
(V^\epsilon)^{\frac{k+1}{k}},
\qquad
Q^\epsilon_0
:=
(V^\epsilon_0)^{\frac{k+1}{k}}.
\end{align}
Then the initial data \eqref{eq:V_initial_data} becomes
\begin{equation}\label{eq:Q_epsilon_initial_data}
Q^\epsilon(\cdot,0)=Q^\epsilon_0=(V_0^\epsilon)^{\frac{k+1}{k}}.
\end{equation}
Because $V^\epsilon_0$ is bounded away from zero and belongs to
$C_b^{2+\alpha}(\mathbb R^d)$, we have
\[
Q^\epsilon_0\in C_b^{2+\alpha}(\mathbb R^d).
\]
Moreover, the bounds of $V_0^\eps$ implies
\[
m_0^{\frac{k+1}{k}}
\leq
Q^\epsilon_0
\leq
M_0^{\frac{k+1}{k}}.
\]
We now derive the evolution equation of $Q^\epsilon$. Set
\[
W^\epsilon:=(V^\epsilon)^{1/k}.
\]
Then
\[
Q^\epsilon=(W^\epsilon)^{k+1},
\]
and the equation for $W^\epsilon$ is obtained as
\begin{equation}\label{W_equation}
W^\epsilon_t
+
\sum_{j=1}^d
\partial_{x_j}(W^\epsilon)^{(k+1)}
=
\epsilon\frac{k}{k+1}\Delta (W^\epsilon)^{k+1}.
\end{equation}
It follows that $Q^\epsilon$ satisfies
\begin{equation}\label{eq:Q_epsilon}
Q^\epsilon_t
+
(k+1)(Q^\epsilon)^{\frac{k}{k+1}}
\sum_{j=1}^d\partial_{x_j}Q^\epsilon
=
\epsilon k
(Q^\epsilon)^{\frac{k}{k+1}}
\Delta Q^\epsilon.
\end{equation}
The diffusion coefficient in \eqref{eq:Q_epsilon} is
\[
a_\epsilon(Q)
:=
\epsilon kQ^{\frac{k}{k+1}}.
\]
It is smooth and strictly positive for $Q>0$. Since $Q^\epsilon_0$ is bounded away from zero, standard local
quasilinear parabolic theory (see, e.g., \cite{amann1995linear, ladyparabolic}) gives a unique classical solution
$Q^\epsilon$ of
\eqref{eq:Q_epsilon}--\eqref{eq:Q_epsilon_initial_data} on a maximal
interval $[0,T_{\max})$, $T_{\max}>0$, such that
\[
Q^\epsilon
\in
C\bigl([0,T_{\max});C_b^{2+\alpha}(\mathbb R^d)\bigr)
\cap
C^1\bigl([0,T_{\max});C_b^\alpha(\mathbb R^d)\bigr).
\]

Since
\[
Q^\epsilon_0(x)
\geq
m_0^{\frac{k+1}{k}}>0
\qquad
\text{for every }x\in\mathbb R^d,
\]
and
\[
Q^\epsilon
\in
C\bigl([0,T_{\max});C_b(\mathbb R^d)\bigr),
\]
continuity at $t=0$ implies that there exists $\tau_1>0$ such that
\[
{\left\|
Q^\epsilon(\cdot,t)-Q^\epsilon(\cdot,0)
\right\|}_{L^\infty(\mathbb R^d)}
<
\delta_Q
\qquad
\text{for }0\leq t\leq\tau_1,
\]
where
\[
\delta_Q
=
\min\left\{\frac{(m_0)^\frac{k+1}{k}}{2},1\right\}.
\]
Consequently,
\[
\begin{aligned}
Q^\epsilon(x,t)
&\geq
Q^\epsilon_0(x)-\delta_Q
\\
&\geq
(m_0)^\frac{k+1}{k}-\frac{(m_0)^\frac{k+1}{k}}{2}\\
&=
\frac{(m_0)^\frac{k+1}{k}}{2}>0,
\end{aligned}
\]
Therefore, \eqref{eq:Q_epsilon} is a uniformly parabolic equation in $[0, \tau_1]$, and thus the standard maximum principle can be applied on $Q^\epsilon$, which gives the bounds 
\[
Q^\epsilon\in [m_0^\frac{k+1}{k}, M_0^\frac{k+1}{k}]\quad \forall \,(x, t)\in \mathbb{R}^d\times [0, \tau_1].
\]
The coefficients of the equation and all their derivatives with
respect to $Q$ therefore remain bounded on the range of the
solution. The standard continuation theorem for uniformly
parabolic quasilinear equations then excludes a finite maximal
existence time. Hence
\[
T_{\max}=\infty.
\]
Moreover, using \eqref{eq: VtoQ}, we have
\[
V^\epsilon
\in
C\bigl([0,T];C_b^{2+\alpha}(\mathbb R^d)\bigr)
\cap
C^1\bigl([0,T];C_b^\alpha(\mathbb R^d)\bigr),
\]
and the preceding calculations show that $V^\epsilon$ solves
\eqref{eq:scalar_velocity_equation_main} and \eqref{eq:V_initial_data}. Moreover, the bounds on $Q^\epsilon$ imply that $V^\epsilon\in [m_0, M_0]$. 

Now it remains to show that the Cauchy problem \eqref{eq:ratio_transport_equation_main} and \eqref{eq:xi_initial_data} has a unique solution $\theta_i^\eps$ for $i=2, 3, \ldots, n$. Since, for each $i=1, 2, \ldots, n$, we have by \eqref{eq: U0bound} the bounds
\[
m\leq u^\epsilon_{i,0},u^\epsilon_{1,0}\leq M.
\]
Therefore, we have
\[
\frac{m}{M}
\leq
\theta^\epsilon_{i,0}(x)
\leq
\frac{M}{m}.
\]
Now, define
\[
\mathbf c^\epsilon
:=
\left(
V^\epsilon-\epsilon\partial_{x_1}V^\epsilon,
\ldots,
V^\epsilon-\epsilon\partial_{x_d}V^\epsilon
\right).
\]
The regularity of $V^\epsilon$ thus implies
\[
\mathbf c^\epsilon
\in
C\bigl([0,T];
C_b^{1+\alpha}(\mathbb R^d)\bigr).
\]
Hence $\mathbf c^\epsilon$ is bounded and globally Lipschitz in
$x$, uniformly for $t\in[0,T]$ and thus the characteristic flow is globally defined on $[0,T]$. The method of characteristics thus
implies the existence of a unique classical solution  $\theta_i^\epsilon$
of the Cauchy problem \eqref{eq:ratio_transport_equation_main}-\eqref{eq:xi_initial_data}, which satisfy
\[
\theta_i^\epsilon
\in
C\bigl([0,T];C_b^{1+\alpha}(\mathbb R^d)\bigr)
\cap
C^1\bigl([0,T];C_b^\alpha(\mathbb R^d)\bigr)
\]
and
\begin{equation}\label{eq:global_ratio_bounds}
\frac{m}{M}
\leq
\theta_i^\epsilon(x,t)
\leq
\frac{M}{m}.
\end{equation}
It remains to reconstruct $\mathbf U^\epsilon$. Set
\[
I_{M}
=
\left[
\frac{m}{M},\frac{M}{m}
\right]^{n-1}.
\]
Since $g>0$ is continuous, the constants
\[
\nu_-
=
\min_{\boldsymbol\theta\in I_{M}}
g(\boldsymbol\theta)>0,
\qquad
\nu_+
=
\max_{\boldsymbol\theta\in I_{M}}
g(\boldsymbol\theta)<\infty
\]
are well defined.

Using the reconstruction formula \eqref{eq:reconstruction_formula}, we get the bound
\[
\left(
\frac{m_0}{\nu_+}
\right)^{1/k}
\leq
u_1^\epsilon
\leq
\left(
\frac{M_0}{\nu_-}
\right)^{1/k}.
\]
Using \eqref{eq:global_ratio_bounds} and the reconstruction formula \eqref{eq:reconstruction_formula}, we find
\[
\frac{m}{M}
\left(
\frac{m_0}{\nu_+}
\right)^{1/k}
\leq
u_i^\epsilon
\leq
\frac{M}{m}
\left(
\frac{M_0}{\nu_-}
\right)^{1/k},
\qquad i=2,\ldots,n.
\]
Thus, with
\[
m_*
=
\frac{m}{M}
\left(
\frac{m_0}{\nu_+}
\right)^{1/k}, \quad M_*
=
\frac{M}{m}
\left(
\frac{M_0}{\nu_-}
\right)^{1/k},
\]
we obtain
\[
\dfrac{M m_*}{m}\leq u_1^\eps\leq \dfrac{mM_*}{M},\, \quad \, \quad m_*
\leq
u_i^\epsilon(x,t)
\leq
M_*,
\qquad i=2,\ldots,n.
\]

At $t=0$, homogeneity of $V(\mathbf{U})$ gives
\[
V^\epsilon_0
=
(u^\epsilon_{1,0})^k
g(\boldsymbol\theta^\epsilon_0),
\]
so the reconstruction formula \eqref{eq:reconstruction_formula} yields
\[
\mathbf U^\epsilon(\cdot,0)
=
\mathbf U^\epsilon_0.
\]
The regularity of $V^\epsilon$, the ratios $\theta^\epsilon_i$, and the reconstruction
formula \eqref{eq:reconstruction_formula} then gives
\[
\mathbf U^\epsilon
\in
C\bigl([0,T];C_b^{1+\alpha}(\mathbb R^d;\mathcal{U})\bigr)
\cap
C^1\bigl([0,T];C_b^\alpha(\mathbb R^d;\mathcal{U})\bigr).
\]

By the equivalence established in
Lemma~\ref{lem:decoupled_system}, the reconstructed state
$\mathbf U^\epsilon$ solves
\eqref{eq:homogeneous_KK_viscos_2} and satisfies the initial data \eqref{eq:homogeneous_KK_viscos_initial_data}. Finally, uniqueness of the
scalar parabolic problem, uniqueness of the characteristic flow,
and the reconstruction formula imply uniqueness of
$\mathbf U^\epsilon$. This completes the proof.
\end{proof}
\section{The vanishing viscosity limit of the system \eqref{eq:homogeneous_KK_viscos_2}}\label{sec: vanishing} 
In this Section, we pass to the limit $\epsilon \rightarrow 0$ in the Cauchy problem \eqref{eq:homogeneous_KK_viscos_2}-\eqref{eq:homogeneous_KK_viscos_initial_data} and prove the well-posedness of the global weak solutions of the Cauchy problem \eqref{eq:homogeneous_KK}-\eqref{initial_Data}. 
In order to prove this, we first prove that the sequence $\{V^\eps\}_{\eps>0}$ and $\{\theta_i^\eps\}_{\eps>0}, \, i=2, 3, \ldots, n$ are precompact in $L^1_{loc}(\Omega_T)$. This then allows us to obtain a strong limit for $\mathbf{U}^\eps\rightarrow \mathbf{U}$ in $L^1_{loc}(\Omega_T)$. This strong limit, along with gradient estimates for $V^\eps$ (see Lemma \ref{lem:power_entropy_V}) allows us to pass to the limit in the system \eqref{eq:homogeneous_KK_viscos_2}, which provides us the weak solution of the Cauchy problem \eqref{eq:homogeneous_KK}-\eqref{initial_Data}.

In what follows, we first prove that the sequence $\{V^\epsilon\}_{\epsilon>0}$ is precompact in $L^1_{loc}(\Omega_T)$. We prove this in the following Lemma.
\begin{lemma}[$L^1_{loc}$ precompactness of the sequence $\{V^\epsilon\}_{\epsilon>0}$]
\label{lemma: L1_compact}
Let $k>0$, $T>0$, and $0<\epsilon\leq1$, and assume that the initial data $\mathbf{U}_0^\epsilon$ defined in \eqref{eq:homogeneous_KK_viscos_initial_data} satisfy
\begin{align*}
\mathbf U_0^\epsilon \in {[m,M]}^n \subset \mathcal{U},
\qquad 0<m<M.
\end{align*}
Define the constants $m_0$ and $M_0$ as in \eqref{eq:constantm_0M_0}, such that $V_0^\epsilon$ defined in \eqref{eq:V_initial_data} satisfy
\begin{align}\label{eq:initial_bound_V}
0<m_0\leq V_0^\epsilon\leq M_0.
\end{align}
Further, assume that
\begin{equation}
\label{eq:initial_variation_V}
\sup_{0<\epsilon\leq1}
\int_{\mathbb R^d}|\nabla V_0^\epsilon(x)|\,dx
\leq \Lambda_0
\end{equation}
for some uniform constant $\Lambda_0>0$, independent of $\epsilon$.

For each $\epsilon> 0$, let $V^\epsilon$ be a classical solution of the Cauchy problem for \eqref{eq:scalar_velocity_equation_main} and \eqref{eq:V_initial_data} satisfying \eqref{eq:initial_bound_V}--\eqref{eq:initial_variation_V}. Then the sequence $\{V^\epsilon\}_{\epsilon>0}$ is relatively compact in $L^1_{loc}(\Omega_T)$.
\end{lemma}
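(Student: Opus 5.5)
Our plan is to establish two uniform estimates for $V^\epsilon$: a spatial $BV$ bound and an $L^1$ time-continuity bound. Together with the uniform $L^\infty$ bound $m_0\le V^\epsilon\le M_0$ from Proposition~\ref{lem:global_viscous}, these give relative compactness in $L^1_{loc}(\Omega_T)$ by Kolmogorov--Riesz (equivalently, by Helly's theorem in $BV_{loc}$). The key point is that \eqref{eq:scalar_velocity_equation_main} is a scalar equation which, after the change of variables $W^\epsilon=(V^\epsilon)^{1/k}$, takes the conservative form \eqref{W_equation}. That form is a scalar conservation law with degenerate-free nonlinear diffusion
\[
W_t+\sum_j\partial_{x_j}\Phi(W)=\epsilon\Delta A(W),\qquad \Phi(W)=W^{k+1},\quad A(W)=\tfrac{k}{k+1}W^{k+1},
\]
and $A'(W)=kW^k\ge k m_0>0$ on the invariant range. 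Since the map $V\mapsto V^{1/k}$ is bi-Lipschitz on $[m_0,M_0]$, it suffices to prove compactness for $W^\epsilon$. Hypothesis \eqref{eq:initial_variation_V} also transfers: $\int|\nabla W_0^\epsilon|\le C(m_0,M_0,k)\Lambda_0$.

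\emph{Step 1: $L^1$-contraction and spatial $BV$ bound.} For a fixed shift $h\in\mathbb R^d$, both $W^\epsilon$ and $W^\epsilon(\cdot+h,\cdot)$ solve \eqref{W_equation}. We will use Kružkov's doubling of variables, or more simply multiply the difference equation by $\mathrm{sgn}_\eta(W^\epsilon(x+h)-W^\epsilon(x))$ with a smooth sign approximation. The diffusion term then gives $\int \mathrm{sgn}_\eta'(\cdot)\nabla(W_h-W)\cdot\nabla(A(W_h)-A(W))\ge -o(1)$ as $\eta\to0$, by the standard argument using monotonicity of $A$. The result is
\[
\int_{\mathbb R^d}|W^\epsilon(x+h,t)-W^\epsilon(x,t)|\,dx\le\int_{\mathbb R^d}|W^\epsilon_0(x+h)-W^\epsilon_0(x)|\,dx\le |h|\,C\Lambda_0 .
\]
Cutting off at infinity requires some care, because $W^\epsilon$ is merely bounded and not integrable; the differences are, however, integrable since $\nabla W_0^\epsilon\in L^1$. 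To handle this we will work with the differentiated equation for $\partial_{x_l}W^\epsilon$ directly, using a cutoff $\chi_R$ and letting $R\to\infty$; the boundary terms are controlled by the $C_b^{2+\alpha}$ regularity at fixed $\epsilon$. Alternatively we will invoke the classical $L^1$-contraction for uniformly parabolic conservation laws. This gives $\sup_t\|\nabla W^\epsilon(\cdot,t)\|_{L^1}\le C\Lambda_0$ uniformly in $\epsilon$.

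\emph{Step 2: time equicontinuity.} Take a test function $\psi\in C_c^\infty(\mathbb R^d)$ and integrate \eqref{W_equation} against $\psi$ over $[t,t+\tau]$. This yields
\[
\Big|\int (W^\epsilon(t+\tau)-W^\epsilon(t))\psi\Big|\le \tau\big(\|\Phi'(W)\|_\infty\|\nabla\psi\|_{L^\infty}+\epsilon\|A'(W)\|_\infty\|\nabla\psi\|_{L^\infty}\big)\,C\Lambda_0 ,
\]
after moving one derivative onto $\psi$ and using Step 1 together with $\nabla A(W)=A'(W)\nabla W$. Note that $\epsilon\le1$ keeps the constants uniform. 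We then choose $\psi$ to be a mollification of $\mathrm{sgn}(W^\epsilon(t+\tau)-W^\epsilon(t))\chi_K$ at scale $\rho$ and use the spatial $BV$ bound to control the mollification error. Optimizing in $\rho$ gives $\int_K|W^\epsilon(t+\tau)-W^\epsilon(t)|\le C_K(\sqrt{\tau}+\tau)$, uniformly in $\epsilon$.

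\emph{Step 3: conclusion.} The uniform $L^\infty$ bound, the uniform translation estimates in $x$ (Step 1) and in $t$ (Step 2) on compact sets $K\times[0,T]$ verify the Fréchet--Kolmogorov criterion. Hence $\{W^\epsilon\}$, and therefore $\{V^\epsilon\}=\{(W^\epsilon)^k\}$, is relatively compact in $L^1_{loc}(\Omega_T)$. A diagonal argument over exhausting compacts gives a single subsequence. As a by-product, the limit lies in $BV_{loc}$. The main obstacle is Step 1: the rigorous $L^1$-contraction for the differentiated, quasilinear equation on all of $\mathbb R^d$ with non-decaying solutions. There we first try the Kato-inequality route $\partial_t|\partial_lW|+\nabla\cdot(\Phi'(W)\mathbf 1\,|\partial_lW|)\le\epsilon\Delta(A'(W)|\partial_lW|)$ in the distributional sense, integrating against $\chi_R$ and using the fixed-$\epsilon$ boundedness of derivatives to kill the cutoff terms.
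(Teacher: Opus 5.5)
Your outline is the paper's proof in its essentials. You pass to $W^\epsilon=(V^\epsilon)^{1/k}$, use a Kato/Kru\v{z}kov inequality for differences of spatial translates to get a uniform $BV$ bound, get time equicontinuity from spatial mollification plus that $BV$ bound, and conclude by a Kolmogorov/Helly compactness argument. There are two technical differences.
\begin{itemize}
\item The paper never proves your unweighted contraction $\|W^\epsilon(\cdot+h,t)-W^\epsilon(\cdot,t)\|_{L^1}\le\|W^\epsilon_0(\cdot+h)-W^\epsilon_0\|_{L^1}$. It tests $\partial_t|w|+\nabla\cdot(b|w|)\le\epsilon\Delta(a|w|)$ against the integrable weight $\vartheta(x)=e^{-\sqrt{1+|x|^2}}$, which satisfies $|\nabla\vartheta|\le\vartheta$ and $|\Delta\vartheta|\le C\vartheta$. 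Gronwall then gives $\int|w(t)|\vartheta\le e^{CT}\int|w(0)|\vartheta$, hence only a local $BV$ bound. That local bound is all your Steps 2--3 actually use.
\item For the time estimate, the paper puts every derivative on the mollifier and gets the modulus $\tau^{1/3}$. You use the $BV$ bound on the flux and viscous terms and get $\sqrt{\tau}$. Both are fine.
\end{itemize}

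The one step that would not go through as you wrote it is the justification of Step 1 at spatial infinity. Testing with $\chi_R(x)=\chi(x/R)$ produces error terms of size $\frac{C}{R}\int_{R<|x|<2R}|\partial_l W^\epsilon(x,t)|\,dx$. The fixed-$\epsilon$ $C_b^{2+\alpha}$ regularity only bounds $\|\nabla W^\epsilon\|_{L^\infty}$, so this error is $O(R^{d-1})$ and does not vanish as $R\to\infty$. To kill it you would need $\partial_l W^\epsilon(\cdot,t)\in L^1$, which is exactly what you are trying to propagate, so the argument is circular.

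The fix is the paper's device, rescaled. Take $\vartheta_R(x)=e^{-\sqrt{1+|x|^2}/R}$. For $R\ge1$ it satisfies $|\nabla\vartheta_R|\le\vartheta_R/R$ and $|\Delta\vartheta_R|\le C\vartheta_R/R$. Because $\vartheta_R\in L^1$ and $w$ is bounded, the cutoff justification now genuinely works. Gronwall gives $\int|w(t)|\vartheta_R\le e^{Ct/R}\int|w(0)|$, and letting $R\to\infty$ by monotone convergence yields your unweighted contraction. Alternatively, settle for the weighted, local bound as the paper does, since nothing downstream needs more. Your fallback of citing the classical $L^1$-contraction for uniformly parabolic conservation laws with bounded solutions is also acceptable.
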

\begin{proof}
Recall the transformation
\[
W^\epsilon=(V^\epsilon)^{1/k}.
\]
Since $m_0\leq V_0^\epsilon\leq M_0$, in view of Proposition \ref{lem:global_viscous}, we have
\[
\underline w:=m_0^{1/k}
\leq W^\epsilon
\leq M_0^{1/k}=:\overline w.
\]
The evolution equation of $W^\epsilon$ is given by
\eqref{W_equation} and can be written in a compact form as
\begin{equation}
\label{eq:W_epsilon}
\partial_tW^\epsilon+\nabla\cdot F(W^\epsilon)
=
\epsilon\Delta B(W^\epsilon),
\end{equation}
with initial data
\begin{equation}\label{eq:initialdatafor_W}
    W^\epsilon(.,0) = W^\epsilon_0 := (V_0^\epsilon)^{1/k}.
\end{equation}
In \eqref{eq:W_epsilon}, we denote
\[
F(W) =(f_1,\ldots,f_d):=W^{k+1}(1,\ldots,1), \quad 
B(W)=\frac{k}{k+1}W^{k+1}.
\]
In particular, $B'(W)=kW^k$, and therefore
\begin{equation}
\label{eq:Bprime_bounds}
km_0\leq B'(W^\epsilon)\leq kM_0.
\end{equation}
We now prove precompactness of $W^\epsilon$. Since
$w\mapsto w^k$ is Lipschitz continuous on
$[\underline w,\overline w]$, this implies the corresponding
compactness of $V^\epsilon$. Note that $W^\epsilon$ satisfies a
classical scalar viscous conservation law with nonlinear viscosity of the form \eqref{eq:W_epsilon}. Therefore, we can
obtain space-time estimates for $W^\eps$ following Kruzhkov \cite{Kruzkov} and
Alibaud et al. \cite{alibaud_entropy}.

Let us define
\[
\vartheta(x)=\exp\left(-\sqrt{1+|x|^2}\right), \quad x\in \mathbb{R}^d.
\]
Then
\begin{align}
\label{vartheta_bounds}
\vartheta\in L^1(\mathbb R^d),
\qquad
0<\vartheta\leq1,
\qquad
|\nabla\vartheta|\leq\vartheta,
\qquad
|\Delta\vartheta|\leq C\vartheta.
\end{align}
Fix $y\in\mathbb R^d$ and define
\[
U(x,t):=W^\epsilon(x+y,t),
\qquad
Z(x,t):=W^\epsilon(x,t),
\qquad
w:=U-Z.
\]
Since \eqref{eq:W_epsilon} is invariant under spatial translations,
we obtain
\begin{equation}
\label{W_updated}
w_t+\nabla\cdot(F(U)-F(Z))
=
\epsilon\Delta(B(U)-B(Z)).
\end{equation}
Define
\[
b(x,t)=\int_0^1 DF(Z+sw)\,ds,
\qquad
a(x,t)=\int_0^1B'(Z+sw)\,ds.
\]
Then, we have
\[
F(U)-F(Z)=bw,
\qquad
B(U)-B(Z)=aw,
\]
and hence \eqref{W_updated} becomes
\begin{equation}
\label{eq:difference_W}
w_t+\nabla\cdot(bw)
=
\epsilon\Delta(aw).
\end{equation}
Moreover,  \eqref{eq:Bprime_bounds} and Proposition \ref{lem:global_viscous} implies that there exist a constant $L_F>0$ such that
\begin{align}
\label{a_b_bounds}
|b|
\leq
\sup_{\underline w\leq \tau\leq\overline w}|DF(\tau)|
=:L_F,
\qquad
km_0\leq a\leq kM_0.
\end{align}
Then by multiplying with a sequence of convex functions converging
to $\sgn(w)$ and in view of the monotonicity of $B'$, it is easy to
obtain the following inequality in the sense of distributions (see, e.g., \cite{carrillo1999entropy, chen2003well}).
\begin{equation}
\label{eq:absolute_value_W}
\partial_t|w|
+
\nabla\cdot(b|w|)
\leq
\epsilon\Delta(a|w|).
\end{equation}
Testing \eqref{eq:absolute_value_W} with $\vartheta$ and using a
cutoff function argument, one can rigorously obtain
\begin{align}
\label{eq:absolute_value_W_integral}
\int_{\mathbb R^d}|w(x,t)|\vartheta(x)\,dx
&\leq
\int_{\mathbb R^d}|w(x,0)|\vartheta(x)\,dx
\nonumber\\
&\quad+
\int_0^t\int_{\mathbb R^d}
(b|w|)\cdot\nabla\vartheta\,dx\,ds
\nonumber\\
&\quad+
\int_0^t\int_{\mathbb R^d}
\epsilon(a|w|)\Delta\vartheta\,dx\,ds.
\end{align}
Noting the bounds
\eqref{vartheta_bounds} and \eqref{a_b_bounds}, we then have
\begin{equation}
\label{eq:absolute_value_W_integral_updated}
\int_{\mathbb R^d}|w(x,t)|\vartheta(x)\,dx
\leq
\int_{\mathbb R^d}|w(x,0)|\vartheta(x)\,dx
+
C\int_0^t\int_{\mathbb R^d}|w|\vartheta\,dx\,ds,
\end{equation}
where $C:=C(m_0,M_0)$ is a uniform constant and is independent of
$\epsilon$. Therefore, using Gronwall's inequality, we have
\begin{align}
\label{space_continuity_with_vartheta}
\int_{\mathbb R^d}
|W^\epsilon(x+y,t)-W^\epsilon(x,t)|\vartheta(x)\,dx
\leq
e^{CT}
\int_{\mathbb R^d}
|W^\epsilon(x+y,0)-W^\epsilon(x,0)|\vartheta(x)\,dx.
\end{align}

We now use \eqref{space_continuity_with_vartheta} to obtain uniform
bounded variation estimates. Let $y=he_i$, where $h>0$ and
$e_i$ is the $i$-th standard basis vector of $\mathbb{R}^d$. Dividing
\eqref{space_continuity_with_vartheta} by $h$, we obtain
\begin{align}
\label{space_continuity_h_division}
\int_{\mathbb R^d}
\frac{
|W^\epsilon(x+he_i,t)-W^\epsilon(x,t)|
}{h}
\vartheta(x)\,dx
\leq
e^{CT}
\underbrace{\int_{\mathbb R^d}
\frac{
|W_0^\epsilon(x+he_i)-W_0^\epsilon(x)|
}{h}
\vartheta(x)\,dx}_{=: \it I}.
\end{align}
Since $\vartheta\leq1$, it follows that
\begin{align}
 \it I=\int_{\mathbb R^d}
\frac{
|W_0^\epsilon(x+he_i)-W_0^\epsilon(x)|
}{h}
\vartheta(x)\,dx
&\leq
\int_{\mathbb R^d}
\frac{
|W_0^\epsilon(x+he_i)-W_0^\epsilon(x)|
}{h}\,dx
\nonumber\\
&\leq
\int_{\mathbb R^d}
|\partial_{x_i}W_0^\epsilon(x)|\,dx.
\label{eq:initial_difference_BV}
\end{align}
Moreover, since
\[
\nabla W_0^\epsilon
=
\frac{1}{k}(V_0^\epsilon)^{1/k-1}
\nabla V_0^\epsilon,
\]
and $m_0\leq V_0^\epsilon\leq M_0$. The Assumption
\eqref{eq:initial_variation_V} therefore gives
\begin{equation}
\label{eq:initial_variation_W}
\sup_{0<\epsilon\leq1}
\int_{\mathbb R^d}|\nabla W_0^\epsilon(x)|\,dx
\leq C\Lambda_0.
\end{equation}

Taking the limit $h\to0$ in
\eqref{space_continuity_h_division}, we obtain
\begin{align}
\label{space_continuity_derivative_bound}
\int_{\mathbb R^d}
|\partial_{x_i}W^\epsilon(x,t)|
\vartheta(x)\,dx
\leq
e^{CT}
\int_{\mathbb R^d}
|\partial_{x_i}W_0^\epsilon(x)|\,dx.
\end{align}
Summing over $i=1,\ldots,d$ and using
\eqref{eq:initial_variation_W}, we obtain
\begin{align}
\label{space_continuity_global_BV}
\sup_{0\leq t\leq T}
\int_{\mathbb R^d}
|\nabla W^\epsilon(x,t)|\vartheta(x)\,dx
\leq C e^{CT}\Lambda_0.
\end{align}

For any fixed radius $R>0$, we have
\[
\vartheta(x)\geq e^{-\sqrt{1+R^2}}
\qquad\text{for }x\in\mathcal B_R.
\]
This allows us to localize the estimate \eqref{space_continuity_global_BV}, which gives
\begin{align}
\label{space_continuity_local_BV}
\sup_{0\leq t\leq T}
\int_{\mathcal B_R}
|\nabla W^\epsilon(x,t)|\,dx
\leq C_R,
\end{align}
where $C_R$ is uniform in $\epsilon$. Since $W^\epsilon$ is also
uniformly bounded, it follows that the sequence
$\{W^\epsilon\}_{\epsilon>0}$ is uniformly bounded in
\[
L^\infty(0,T;BV_{loc}(\mathbb R^d)).
\]

To establish uniform equicontinuity of $\{W^\eps\}_{\eps>0}$ in time, let
$\rho\in C_c^\infty(\mathbb R^d)$ be a standard nonnegative
mollifier supported in $\mathcal B_R$ and set
\[
\rho_h(x)=h^{-d}\rho(x/h),
\qquad
W_h^\epsilon=\rho_h*W^\epsilon.
\]
For every $t\in[0,T]$, we have 
\begin{align}
{\|W_h^\epsilon(t)-W^\epsilon(t)\|}_{L^1(\mathcal B_R)}
&\leq
\int_{\mathbb R^d}\rho_h(z)
\int_{\mathcal B_R}
|W^\epsilon(x-z,t)-W^\epsilon(x,t)|\,dx\,dz.
\label{eq:mollification_space_error}
\end{align}
Using the uniform $BV_{loc}$ estimate
\eqref{space_continuity_local_BV}, we obtain, for $0<h\leq1$,
\begin{equation}
\label{eq:mollification_BV_error}
\sup_{0\leq t\leq T}
{\|W_h^\epsilon(t)-W^\epsilon(t)\|}_{L^1(\mathcal B_R)}
\leq C_Rh.
\end{equation}

On the other hand, convolving \eqref{eq:W_epsilon} with $\rho_h$ gives
\[
\partial_tW_h^\epsilon
=
-F(W^\epsilon)*\nabla\rho_h
+
\epsilon B(W^\epsilon)*\Delta\rho_h.
\]
Since $W^\epsilon$ is uniformly bounded, we have
\[
{\|F(W^\epsilon)\|}_{L^\infty}
+
{\|B(W^\epsilon)\|}_{L^\infty}
\leq C.
\]
Furthermore,
\[
{\|\nabla\rho_h\|}_{L^1}
\leq Ch^{-1},
\qquad
{\|\Delta\rho_h\|}_{L^1}
\leq Ch^{-2}.
\]
Therefore, we obtain
\begin{equation}
\label{eq:mollified_time_derivative}
{\|\partial_tW_h^\epsilon(t)\|}_{L^\infty(\mathbb R^d)}
\leq
C(h^{-1}+h^{-2}),
\end{equation}
uniformly in $\epsilon$.

Let $0\leq t\leq t+\tau\leq T$. Then
\[
{\|W_h^\epsilon(t+\tau)-W_h^\epsilon(t)\|}_{L^1(\mathcal B_R)}
\leq
C_R\tau(h^{-1}+h^{-2}).
\]
Using \eqref{eq:mollification_BV_error} at times $t$ and
$t+\tau$, we therefore obtain
\begin{align}
{\|W^\epsilon(t+\tau)-W^\epsilon(t)\|}_{L^1(\mathcal B_R)}
&\leq
2C_Rh
+
C_R\tau(h^{-1}+h^{-2}).
\label{eq:time_translation_interpolation}
\end{align}
Choosing
\[
h=\tau^{1/3},
\]
we conclude that
\begin{equation}
\label{eq:local_time_translation_W}
\lim_{\tau\to0}
\sup_{0<\epsilon\leq1}
\sup_{0\leq t\leq T-\tau}
\int_{\mathcal B_R}
|W^\epsilon(x,t+\tau)-W^\epsilon(x,t)|\,dx
=0.
\end{equation}

Since the sequence $\{W^\epsilon\}_{\epsilon>0}$ is uniformly
bounded in
$L^\infty(0,T;BV_{loc}(\mathbb R^d))$, the compact
embedding $BV(\mathcal B_R)\Subset L^1(\mathcal B_R)$ provides compactness in space. Together with the uniform time
equicontinuity \eqref{eq:local_time_translation_W}, the
Arzel\`a--Ascoli theorem and the compactness theorem for $BV$
functions imply that the sequence
$\{W^\epsilon\}_{\epsilon>0}$ is precompact in
\[
L^1_{loc}(\mathbb R^d\times[0,T]).
\]
Using the Lipschitz continuity of the map $s\mapsto s^k$ on
$[\underline w,\overline w]$, the precompactness of the sequence
$\{V^\epsilon\}_{\epsilon>0}$ in
$L^1_{loc}(\mathbb R^d\times[0,T])$ is straightforward.
\end{proof}
Although Lemma \ref{lemma: L1_compact} provides strong compactness for the sequence $\{V^\eps\}_{\eps>0}$, it is still not enough to pass to the limit in the nonlinear viscous term of the viscous system \eqref{eq:homogeneous_KK_viscos_2}. In order to pass to the limit in the system \eqref{eq:homogeneous_KK_viscos_2}, we need a strong limit for the sequence $\{\boldsymbol\theta^\eps\}_{\eps>0}$ as $\epsilon \rightarrow 0$ and also gradient control for the sequence $\{\nabla V^\epsilon\}_{\epsilon>0}$. We prove the gradient estimates of $V^\eps$ in the following lemma. 
\begin{lemma}[Local viscous gradient estimate for $\{V^\eps\}_{\eps>0}$]
\label{lem:power_entropy_V}
Let $k>0$, $\beta>0$, and $T>0$. Assume that
\[
V^\epsilon\in C^{2,1}(\mathbb R^d\times[0,T])
\]
be a classical solution of \eqref{eq:scalar_velocity_equation_main} and \eqref{eq:V_initial_data} such that
\[
0<m_0\leq V^\epsilon(x,t)\leq M_0,
\quad
\text{for all }(x,t)\in\mathbb R^d\times[0,T].
\]
Define
\[
{\eta}_\beta(V)=V^{-\beta},
\]
and 
\begin{align*}
q_\beta(V)
=\begin{cases}
\frac{\beta(k+1)}{\beta-1}V^{1-\beta},\quad \beta\neq 1,\\[1ex]
-(k+1)\log V,\quad \beta=1.
\end{cases}
\end{align*}
Then $V^\epsilon$ satisfies the pointwise identity
\begin{align}
\partial_t\eta_\beta(V^\epsilon)
+
\sum_{j=1}^d
\partial_{x_j}q_\beta(V^\epsilon)
&=
-\epsilon\beta k
\nabla\cdot
\left(
(V^\epsilon)^{-\beta}\nabla V^\epsilon
\right)-\epsilon\beta(1+k\beta)
(V^\epsilon)^{-\beta-1}
|\nabla V^\epsilon|^2.
\label{eq:power_entropy_identity}
\end{align}
In particular, for every compact set $K\Subset\mathbb R^d$,
there exists a constant $C_{K,T}>0$, independent of $\epsilon$, such that
\begin{equation}
\label{eq:local_gradient_estimate}
\epsilon
\int_0^T\int_K
|\nabla V^\epsilon|^2\,dx\,dt
\leq C_{K,T}.
\end{equation}
Equivalently,
\[
\left\{
\sqrt{\epsilon}\,\nabla V^\epsilon
\right\}_{\epsilon>0}
\quad\text{is bounded in }
L^2_{loc}
(\mathbb R^d\times(0,T)).
\]
\end{lemma}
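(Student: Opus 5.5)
The plan has two parts: verify the entropy identity \eqref{eq:power_entropy_identity} by direct computation, then test it against a spatial cutoff. The dissipation term has a good sign, and every other term is bounded using only the invariant-domain bounds $m_0\le V^\epsilon\le M_0$ from Proposition~\ref{lem:global_viscous}.

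\textbf{Step 1 (the identity).} I multiply \eqref{eq:scalar_velocity_equation_main} by $\eta_\beta'(V^\epsilon)=-\beta (V^\epsilon)^{-\beta-1}$; this is legitimate since $V^\epsilon\in C^{2,1}$ and $V^\epsilon\ge m_0>0$.

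For the convective term, one checks that $q_\beta'(V)=\eta_\beta'(V)(k+1)V=-\beta(k+1)V^{-\beta}$ in both cases $\beta\neq1$ and $\beta=1$. Hence $\eta_\beta'(V^\epsilon)(k+1)V^\epsilon\sum_j\partial_{x_j}V^\epsilon=\sum_j\partial_{x_j}q_\beta(V^\epsilon)$.

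For the viscous term, I use
\[
V^{-\beta}\Delta V=\nabla\cdot(V^{-\beta}\nabla V)+\beta V^{-\beta-1}|\nabla V|^2 .
\]
With this, $\eta_\beta'\,\epsilon kV\Delta V=-\epsilon\beta k\,\nabla\cdot(V^{-\beta}\nabla V)-\epsilon\beta^2kV^{-\beta-1}|\nabla V|^2$. Adding $\eta_\beta'\,\epsilon|\nabla V|^2=-\epsilon\beta V^{-\beta-1}|\nabla V|^2$ gives exactly the dissipation coefficient $-\epsilon\beta(1+k\beta)$ in \eqref{eq:power_entropy_identity}. This step is routine algebra.

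\textbf{Step 2 (localization).} Given a compact $K$, I fix $\psi\in C_c^\infty(\mathbb R^d)$ with $0\le\psi\le1$ and $\psi\equiv1$ on $K$. I multiply \eqref{eq:power_entropy_identity} by $\psi$ and integrate over $\mathbb R^d\times(0,T)$; all boundary terms vanish since $\psi$ has compact support. Moving the dissipation to the left, I get
\begin{align*}
\epsilon\beta(1+k\beta)\int_0^T\!\!\int \psi (V^\epsilon)^{-\beta-1}|\nabla V^\epsilon|^2
&=\int\psi\bigl(\eta_\beta(V_0^\epsilon)-\eta_\beta(V^\epsilon(T))\bigr)dx
+\int_0^T\!\!\int q_\beta(V^\epsilon)\sum_j\partial_{x_j}\psi\\
&\quad+\epsilon\beta k\int_0^T\!\!\int (V^\epsilon)^{-\beta}\nabla V^\epsilon\cdot\nabla\psi .
\end{align*}
The first two terms on the right are bounded by $C(m_0,M_0,\beta,k)(1+T)\|\psi\|_{W^{1,1}}$, since $\eta_\beta$ and $q_\beta$ are continuous on $[m_0,M_0]$.

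The last term is the only one that still involves $\nabla V^\epsilon$, so it is the one point needing care. I would handle it by writing $(V^\epsilon)^{-\beta}\nabla V^\epsilon=\nabla G(V^\epsilon)$, where $G(V)=V^{1-\beta}/(1-\beta)$ for $\beta\neq1$ and $G(V)=\log V$ for $\beta=1$. Integrating by parts once more turns the term into $-\epsilon\beta k\int_0^T\!\int G(V^\epsilon)\Delta\psi$, which is bounded by $CT\|\Delta\psi\|_{L^1}$ because $\epsilon\le1$. Alternatively, Young's inequality absorbs it into the left side.

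\textbf{Step 3 (conclusion).} On the left, $\psi(V^\epsilon)^{-\beta-1}\ge M_0^{-\beta-1}$ on $K$. This yields \eqref{eq:local_gradient_estimate} with $C_{K,T}$ depending only on $m_0,M_0,k,\beta,T$ and $\psi$, and in particular independent of $\epsilon$. The equivalent statement that $\{\sqrt\epsilon\,\nabla V^\epsilon\}_{\epsilon>0}$ is bounded in $L^2_{loc}(\mathbb R^d\times(0,T))$ is just a restatement of \eqref{eq:local_gradient_estimate}.
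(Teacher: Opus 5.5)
Your proof is correct and follows the paper's argument. You use the same multiplier $-\beta(V^\epsilon)^{-\beta-1}$ to get the identity, then localize against a spatial cutoff and bound everything with $m_0\le V^\epsilon\le M_0$. The only difference is the cross term $\epsilon\beta k\iint (V^\epsilon)^{-\beta}\nabla V^\epsilon\cdot\nabla\psi$. The paper absorbs it by Young's inequality with a squared cutoff $\chi^2$. You instead integrate by parts once more via $(V^\epsilon)^{-\beta}\nabla V^\epsilon=\nabla G(V^\epsilon)$, which is slightly cleaner and works for any cutoff. Your fallback to Young's inequality would need $\psi=\chi^2$ to control $|\nabla\psi|^2/\psi$. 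Both versions use $\epsilon\le 1$ to make the constant independent of $\epsilon$.
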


\begin{proof}
Since
\[
\eta_\beta'(V)=-\beta V^{-\beta-1},
\]
multiplying \eqref{eq:scalar_velocity_equation_main} by
$-\beta(V^\epsilon)^{-\beta-1}$ gives
\begin{align}
\partial_t(V^\epsilon)^{-\beta}
-
\beta(k+1)(V^\epsilon)^{-\beta}
\sum_{j=1}^d\partial_{x_j}V^\epsilon
&=
-\epsilon\beta k
(V^\epsilon)^{-\beta}\Delta V^\epsilon
-\epsilon\beta
(V^\epsilon)^{-\beta-1}
|\nabla V^\epsilon|^2.
\label{eq:entropy_multiplied}
\end{align}
For $\beta\neq1$,
\[
q_\beta'(V)
=
-\beta(k+1)V^{-\beta},
\]
and the same relation holds for $\beta=1$ with
$q_1(V)=-(k+1)\log V$. Hence
\[
-\beta(k+1)(V^\epsilon)^{-\beta}
\sum_{j=1}^d\partial_{x_j}V^\epsilon
=
\sum_{j=1}^d
\partial_{x_j}q_\beta(V^\epsilon).
\]
Further, we use the identity
\[
\nabla\cdot
\left(
(V^\epsilon)^{-\beta}\nabla V^\epsilon
\right)
=
(V^\epsilon)^{-\beta}\Delta V^\epsilon
-
\beta
(V^\epsilon)^{-\beta-1}
|\nabla V^\epsilon|^2.
\]
Consequently,
\begin{align}\label{eq: V_laplacian}
-\beta k(V^\epsilon)^{-\beta}\Delta V^\epsilon
&=
-\beta k
\nabla\cdot
\left(
(V^\epsilon)^{-\beta}\nabla V^\epsilon
\right)
-\beta^2k
(V^\epsilon)^{-\beta-1}
|\nabla V^\epsilon|^2.
\end{align}
Substituting \eqref{eq: V_laplacian} in \eqref{eq:entropy_multiplied} yields
\eqref{eq:power_entropy_identity}.

It remains to obtain the local gradient estimate \eqref{eq:local_gradient_estimate}. Let
$K\Subset\mathbb R^d$ and $\chi\in C_c^\infty(\mathbb R^d),
\, \,
0\leq\chi\leq1$ be a standard mollifier. Multiplying \eqref{eq:power_entropy_identity} by $\chi^2$ and
integrating over $\mathbb R^d\times(0,T)$ gives
\begin{align}
\epsilon\beta(1+k\beta)
&\int_0^T\int_{\mathbb R^d}
\chi^2(V^\epsilon)^{-\beta-1}
|\nabla V^\epsilon|^2\,dx\,dt
\nonumber\\
&=
\int_{\mathbb R^d}
\left(
(V^\epsilon_0)^{-\beta}
-
(V^\epsilon(T))^{-\beta}
\right)\chi^2\,dx
+
\int_0^T\int_{\mathbb R^d}
q_\beta(V^\epsilon)
\sum_{j=1}^d\partial_{x_j}(\chi^2)\,dx\,dt\nonumber\\
&+
\epsilon\beta k
\int_0^T\int_{\mathbb R^d}
(V^\epsilon)^{-\beta}
\nabla V^\epsilon\cdot\nabla(\chi^2)\,dx\,dt.
\label{eq:localized_entropy}
\end{align}
Since $m_0\leq V^\epsilon\leq M_0$, both
$(V^\epsilon)^{-\beta}$ and $q_\beta(V^\epsilon)$ are uniformly
bounded. Thus the first two terms on the right-hand side of
\eqref{eq:localized_entropy} are bounded independently of $\epsilon$.

For the last term of \eqref{eq:localized_entropy}, Young's inequality and
$\nabla(\chi^2)=2\chi\nabla\chi$ give
\begin{align*}
&2\epsilon\beta k
\int_0^T\int_{\mathbb R^d}
\chi(V^\epsilon)^{-\beta}
|\nabla V^\epsilon||\nabla\chi|\,dx\,dt
\\
&\qquad\leq
\frac{\epsilon\beta(1+k\beta)}{2}
\int_0^T\int_{\mathbb R^d}
\chi^2(V^\epsilon)^{-\beta-1}
|\nabla V^\epsilon|^2\,dx\,dt
+C\epsilon.
\end{align*}
Absorbing the first term
into the left-hand side of \eqref{eq:localized_entropy}, and using
\[
(V^\epsilon)^{-\beta-1}\geq M_0^{-\beta-1},
\]
we conclude that
\[
\epsilon
\int_0^T\int_K
|\nabla V^\epsilon|^2\,dx\,dt
\leq C_{K,T}.
\]
This proves the Lemma.
\end{proof}
In view of the precompactness of $V^\eps$ in $L^1_{loc}(\mathbb{R}^d\times (0, T))$ and the local gradient control of $V^\epsilon$, we now prove the following global well-posedness result for the Cauchy problem \eqref{eq:homogeneous_KK}-\eqref{initial_Data} as the first main result of this article.

\begin{theorem}[Well-posedness of the global weak solutions of the Cauchy prolem \eqref{eq:homogeneous_KK}-\eqref{initial_Data}]\label{main_theorem_1}
Let $k>0$, $T>0$ and $M > m>0 $ be constants such that the initial datum 
$\mathbf{U}_0\in L^\infty(\mathbb{R}^d; \mathbb{R}^n)\cap BV_{loc}(\mathbb{R}^d; \mathbb{R}^n)$ satisfies
\begin{align}\label{initial_invariant}
\mathbf{U}_0\in [m, M]^n \subset {\mathcal U}. 
\end{align}
Further, let there is a constant $\Lambda_0>0$ such that
\begin{equation}
\label{eq:initial_variation_V_main}
\int_{\mathbb R^d}|\nabla V_0(x)|\,dx
\leq \Lambda_0.
\end{equation}
\\
Assume that $\{\mathbf{U}^\epsilon\}_{\epsilon>0}$ is a sequence of smooth solutions of the Cauchy problem \eqref{eq:homogeneous_KK_viscos_2}-\eqref{eq:homogeneous_KK_viscos_initial_data}. Then, there exists a subsequence of ${\{\mathbf{U}^\epsilon}\}_{\epsilon>0}$ (still labelled ${\{\mathbf{U}^\epsilon}\}_{\epsilon>0}$) and 
a function $\mathbf{U} = (u_1,u_2, \ldots, u_n)^T\in L^\infty(\Omega_T; \mathcal{U})\cap BV_{loc}(\Omega_T; \mathcal{U})$, such that $\mathbf{U}^\epsilon \rightarrow   \mathbf{U}$ holds a.e.\ for  $\epsilon\rightarrow 0$.\\
The limit $\mathbf{U}$ is a unique weak solution of the Cauchy problem for the hyperbolic system \eqref{eq:homogeneous_KK} in the sense of Definition \ref{weak_soln}. 
\end{theorem}
\begin{proof}
In what follows, we prove that $\mathbf{U}^\eps\rightarrow \mathbf{U}$ in $(L^p_{loc}(\Omega_T))^n, \, \, p\in [1, \infty)$. First, in view of Lemma \ref{lemma: L1_compact}, there exists a subsequence of the sequence $\{V^\epsilon\}_{\epsilon>0}$ (still denoted by $\{V^\epsilon\}_{\epsilon>0}$) and a function $V\in L^\infty(\Omega_T)\cap BV_{loc}(\Omega_T)$ such that $V^\epsilon\rightarrow V$ in $L^1_{loc}(\Omega_T)$, as $\epsilon \rightarrow 0$.

To obtain the convergence of the sequence $\{\mathbf{U}^\eps\}_{\eps>0}$, it remains to establish that $\theta_i^\eps \to \theta_i$ in $L^1_{loc}(\Omega_T)$ for each $i=2, 3, \ldots, n$, where $\theta_i^\eps$ satisfies \eqref{eq:ratio_transport_equation_main}. Since $V^\eps \to V$ in $L^1_{loc}(\Omega_T)$ and the sequence $\{V^\eps\}_{\eps>0}$ is uniformly bounded with respect to $\eps$, the dominated convergence theorem implies that $V^\eps=(W^\eps)^k \to V$ in $L^p_{loc}(\Omega_T)$ for all $1\leq p<\infty$. Furthermore, the local gradient estimates obtained in Lemma \ref{lem:power_entropy_V} show that $\eps\nabla V^\eps \to 0$ in $L^2_{loc}(\Omega_T)$. Therefore, the transport velocities $\mathbf{c}^\eps=V^\eps \mathbf{e}-\epsilon \nabla V^\eps, \, \,\mathbf{e}=(1, 1, \ldots, 1)^\top$, converge strongly in $L^1_{loc}$. In particular, we have
\[
\mathbf{c}^\epsilon \to \mathbf{c} := V\mathbf{e} \, \, \mathrm{in}\,\,L^1_{loc}(\Omega_T).
\]
Note that the evolution equation for $W^\epsilon=(V^\epsilon)^{1/k}$ given by \eqref{W_equation} can also be expressed as a transport equation of the form
\begin{equation}\label{W_continuity}
W^\eps_t+\nabla\cdot (W^\eps\mathbf{c}^\epsilon)=0,
\end{equation}
Multiplying the transport equation \eqref{eq:ratio_transport_equation_main} by $W^\epsilon$ and using the equation \eqref{W_continuity} yields the following conservative equation for $W^\eps \theta_i^\eps$.
\begin{equation}
\label{eq:weighted_xi_epsilon}
\partial_t(W^\epsilon\theta_i^\epsilon) + \nabla \cdot(W^\epsilon\mathbf{c}^\epsilon\theta_i^\epsilon) = 0, \quad i=2, 3, \ldots, n.
\end{equation}

Thus, we consider the Cauchy problem 
\begin{align}
\label{eq:weighted_xi_cauchy}
\begin{cases}
\partial_t(W^\epsilon\theta_i^\epsilon) + \nabla \cdot(W^\epsilon\mathbf{c}^\eps\theta_i^\eps) = 0, \quad i=2, 3, \ldots, n\\
(W^\eps\theta_i^\eps)(x, 0)=W_0^\eps\theta_{i,0}^\eps,
\end{cases}
\end{align}
where $W_0^\epsilon=(V_0^\epsilon)^{1/k},$ and $V_0^\eps$ and $\theta_0^\eps$ are defined in \eqref{eq:V_initial_data}, \eqref{eq:xi_initial_data}, respectively.

In view of Proposition \ref{lem:global_viscous}, the sequence $\{\theta_i^\epsilon\}_{\eps>0}$ is uniformly bounded and thus, by the Banach-Alaoglu theorem, we can extract a subsequence (still denoted by $\{\theta_i^\eps\}_{\eps>0}$) and a function $\theta_i\in L^\infty(\Omega_T)$ such that $\theta_i^\epsilon \rightharpoonup^\ast \theta_i$ in $L^\infty(\Omega_T)$. The strong convergence of $V^\epsilon$ and $\mathbf{c}^\epsilon$ allows us to pass to the limit in \eqref{eq:weighted_xi_cauchy} in the sense of distributions, which implies that $\theta_i$ is a bounded weak solution of the following limit Cauchy problem. 
\begin{align}
\label{eq:weighted_xi_cauchy_limit}
\begin{cases}
\partial_t(W\theta_i) + \nabla \cdot(W\mathbf{c}\theta_i) = 0, \quad i=2, 3, \ldots, n\\
(W\theta_i)(x, 0)=W_0\theta_{i,0},
\end{cases}
\end{align}
Since $W, \mathbf{c} \in BV_{loc}(\Omega_T)$, the product $W \mathbf{c}\in BV_{loc}(\Omega_T)$. Thus, the argument in Lemma 2.8 and Proposition 2.10 of Ambrosio--Bouchut--De Lellis \cite{ambrosio2005well} applies to \eqref{eq:weighted_xi_cauchy_limit}, guaranteeing that $\theta_i^\epsilon\rightarrow \theta_i$ in $L^1_{loc}(\Omega_T)$ uniquely.  In view of Proposition \ref{lem:global_viscous} and the dominated convergence theorem, we can then obtain strong convergence in $L^p_{loc}(\Omega_T)$ for any $p\in [1, \infty)$.

Due to the strong convergence of the sequence $(V^\epsilon, \theta_2^\epsilon, \theta_3^\epsilon, \ldots, \theta_n^\epsilon)\to (V, \theta_2, \theta_3, \ldots, \theta_n)$ and the invertible relation  \eqref{eq:reconstruction_formula}, we can identify a subsequence $\{\mathbf{U}^\eps\}_{\eps>0}$ and
a unique function $\mathbf{U}= (u_1, u_2, \ldots, u_n)^\top$ in $L^\infty(\Omega_T; \mathcal{U})\cap BV_{loc}(\Omega_T; \mathcal{U})$ such that 
\begin{equation}\label{stronglimit}
\mathbf{U}^\eps \to \mathbf{U} \text{ a.e.\ in } (L^1_{loc}(\Omega_T))^n \text{ as } \eps \to 0. 
\end{equation}
We now proceed to prove that the limit $\mathbf{U}$ of the sequence of the approximate solutions $\mathbf{U}^\eps = (u_1^\eps, u_2^\eps, \ldots, u_n^\eps)^\top$ is the weak solution of the Cauchy problem \eqref{eq:homogeneous_KK}-\eqref{initial_Data}. Consider a sequence of smooth solutions $(u_1^\eps, u_2^\eps, \ldots, u_n^\eps)^\top$ of the Cauchy problem \eqref{eq:homogeneous_KK_viscos_2}-\eqref{eq:homogeneous_KK_viscos_initial_data} obtained using the convergent sequences $\{V^\eps\}_{\eps>0}$ and $\{\theta_i^\eps\}_{\eps>0}$ and the transformation \eqref{eq:reconstruction_formula}. Further, consider an arbitrary  vector-valued test function $\bm{\varphi}\in C_0^1(\Omega_T; \mathbb{R}^n)$. Then, by multiplying the system \eqref{eq:homogeneous_KK_viscos_2} with $\bm{\varphi}$ and integrating over $\Omega_T$, we obtain
\begin{equation}\label{weak_form_smooth}
    \displaystyle\iint_{\Omega_T} 
    \Big( \mathbf{U}^\eps\cdot {\bm \varphi}_t
          + \mathbf{U}^\eps V(\mathbf{U}^\eps)\cdot {\bm \varphi}_x
         \Big)\,dx\,dt
    + \displaystyle\int_{\mathbb{R}} \mathbf{U}_0^\eps\cdot {\bm \varphi}(\cdot, 0)\,dx = \epsilon \displaystyle\iint_{\Omega_T}  \left(\mathbf{U}^\eps \nabla V(\mathbf{U}^\eps)\right)\cdot \bm{\varphi}_x\, dx\, dt,
\end{equation}
From the strong limit $\mathbf{U}^\eps\rightarrow \mathbf{U}$ in \eqref{stronglimit}, and continuity of $V$, we deduce  $V\mathbf{(U^\eps)}\rightarrow V(\mathbf{U})$ a.e. in $\Omega_T$. Moreover, in view of local gradient estimate \eqref{eq:local_gradient_estimate}, it follows that
\[
{\|\epsilon \nabla V^\eps\|}_{L^2(\Omega_T)}^2
=
\epsilon^2{\|\nabla V^\eps\|}_{L^2(\Omega_T)}^2
\le \tilde C\epsilon \to 0.
\]
Hence, using the uniform bounds of $\mathbf{U}^\eps$ from the Proposition \ref{lem:global_viscous} and applying Cauchy-Schwarz inequality, we obtain
\[
\epsilon \displaystyle\iint_{\Omega_T}  \left(\mathbf{U}^\eps\nabla V(\mathbf{U}^\eps)
\right)\cdot \bm{\varphi}_x\, dx\, dt \to 0
\qquad\text{as }\epsilon\to0.
\]
Passing to the limit in \eqref{weak_form_smooth} and using the strong convergence of $\mathbf U^\epsilon\to \mathbf U$ and $\mathbf{U}_0^\eps\rightarrow \mathbf{U}_0$, we conclude that the limit function  $\mathbf{U}=(u_1, u_2, \ldots, u_n)^\top$ is the unique  weak solution of the Cauchy problem \eqref{eq:homogeneous_KK}-\eqref{initial_Data} in the sense of Definition \ref{weak_soln}. This completes the proof of the Theorem.
\end{proof}
\section{Stability of the viscous shock for the system \eqref{eq:homogeneous_KK_viscos_2}}\label{sec: viscous_shock}
So far, we have utilized the viscous approximation \eqref{eq:homogeneous_KK_viscos_2} as a regularization for the inviscid system \eqref{eq:homogeneous_KK} and to develop global weak solutions of the Cauchy problem \eqref{eq:homogeneous_KK}-\eqref{initial_Data}.  In Section \ref{sec: viscous}, we established the global well-posedness and $L^\infty$ bound of the smooth viscous solution of the Cauchy problem \eqref{eq:homogeneous_KK_viscos_2}-\eqref{eq:homogeneous_KK_viscos_initial_data} for the initial data belonging to the state space $\mathcal{U}$. In this Section, we analyze the existence and decay properties of the perturbation of the viscous shock with the help of the obtained mathematical structure \eqref{eq:scalar_velocity_equation_main}-\eqref{eq:ratio_transport_equation_main}. 

One of the primary motivations for studying the decay behavior of large perturbations of shocks is to understand the stability of the admissible shock waves of the inviscid hyperbolic system \eqref{eq:homogeneous_KK}. Although the analysis of a general multi-dimensional $n\times n$ system is challenging, we can still derive the time-asymptotic stability of the viscous shock profile of the system \eqref{eq:homogeneous_KK_viscos_2} under certain restrictive assumptions on the state space (see \eqref{eq:viscousassumption} below). In order to achieve the $L^2$-time decay estimate, we use the relative entropy method up to a dynamical shift $X(t)$, which was also utilized in \cite{MR4195742, MR3592682} (see \cite{dafermos2005hyperbolic} for more details on the relative entropy method). However, the presence of nonlinear viscous terms requires careful handling of the resulting residuals in the relative entropy estimates. In order to control these residual terms, we utilize the a-priori estimates of the viscous system \eqref{eq:homogeneous_KK_viscos_2} obtained in Proposition \ref{lem:global_viscous}.

In what follows, we restrict our attention to a subset $\widetilde{\mathcal{U}}$ of the state space $\mathcal{U}$ defined by
\begin{align}\label{eq:viscousassumption}
     \widetilde{\mathcal{U}} =\left\{\left(u_{1}^\eps, \ldots, u_{n}^\eps\right)\in \mathcal{U} \Big|\,\,u_1^\eps\geq c>0, \quad\, \, \frac{u_{i}^\eps}{u_{1}^\eps} = \gamma_i, \quad \gamma_i\in \mathbb{R},\,  i = 2,\ldots,n\right\} \subset \mathcal{U}.
\end{align}
To prove the stability of the viscous shock of the Cauchy problem \eqref{eq:homogeneous_KK_viscos_2}-\eqref{eq:homogeneous_KK_viscos_initial_data}, we adopt a strategy similar to the one used in Section \ref{sec: viscous}, i.e., we first explore the viscous shock profile associated to the transformed variable $W^\epsilon = (V^\epsilon)^{\frac{1}{k}}$ and establish the time-asymptotic stability of the associated viscous shock $\widetilde{W}^\epsilon$ (see also Remark \ref{rem:forviscousshock}). Since the ratio $\theta^\epsilon _i,\, \, i=2, 3, \ldots, n$, is constant in the state space $\widetilde{\mathcal{U}}$, the transport structure in \eqref{eq:ratio_transport_equation_main} implies that $\theta^\epsilon_i$ remains constant for all $(x, t)\in \Omega\times (0, T)$. This allows us to return to the original variable $\mathbf{U}^\epsilon$ by using the inverse transformation \eqref{eq:reconstruction_formula}.

First, we recall the Cauchy problem of $W^\epsilon$ from \eqref{eq:W_epsilon}-\eqref{eq:initialdatafor_W}. We further assume that there exist constant states $W_\pm\in \widetilde{\mathcal{U}}$ such that $W^\epsilon_0(x)\longrightarrow W_\pm \quad \text{as}~x \to \pm \infty$. Then the Cauchy problem \eqref{eq:W_epsilon}-\eqref{eq:initialdatafor_W} reduces to
\begin{align}
\partial_tW^\epsilon&+\nabla\cdot F(W^\epsilon)
=
\epsilon \Delta B(W^\epsilon),\label{eq:equationforW}
\end{align}
with initial data 
\begin{align}\label{eq:initialdataforW}
    W^\epsilon(x,0) = W^\epsilon_0(x)\longrightarrow W_\pm \quad \text{as}~x \to \pm \infty.
\end{align}
In \eqref{eq:equationforW}, $F(W)=(f_1, \ldots, f_d) = W^{k+1}(1,\ldots,1)$, $B(W) = \frac{k}{k+1}W^{k+1}$, $x = (x_1, x')\in \Omega_s: =\mathbb{R}\times \mathbb{T}^{d-1}\subset \Omega$ and $\mathbb{T}^{d-1}:= \mathbb{R}^{d-1}/\mathbb{Z}^{d-1}$, $d\geq 2$ is $d-1$ dimensional flat torus.

Since $\theta^\epsilon_i$ for $i=2, 3, \ldots, n$ is constant for $\mathbf{U}^\eps\in \mathcal{U}$, using the inverse transformation \eqref{eq:reconstruction_formula}, we can find constant states $\mathbf{U}_\pm\in \widetilde{\mathcal{U}}$ defined by
\begin{align}\label{eq:farfieldstates}
    \mathbf{U}_\pm :=  \left(u_{1\pm}, u_{2\pm},\ldots,u_{n\pm}\right)^\top = \left(\frac{W_\pm}{\bar{g}^{1/k}}, \gamma_2\frac{W_\pm}{\bar{g}^{1/k}},\ldots,\gamma_n\frac{W_\pm}{\bar{g}^{1/k}} \right)^\top,
\end{align}
where the constant $\bar{g}= g(\gamma), \, \gamma = (\gamma_2,\gamma_3,\ldots, \gamma_n)^\top$. 

Before stating our second main result, we make the following assumption on the function $W^\eps$.
\begin{assumption}[Assumption on $W^\eps$ and far field states]\label{assumption2}
    For a given left far field state $W_-$, let $W^\eps$ satisfy $M_0{\|(W^\eps)^{k+1}\|}_{L^\infty}\leq m_0(W_-)^{k+1}$, where $M_0$ and $m_0$ are defined as in \eqref{eq:constantm_0M_0}. Further, assume that for any $\delta'\in(0,1)$, the far field states $W_\pm$, defined in \eqref{eq:initialdataforW}, satisfy $W_+ = W_- - \delta'$.  
\end{assumption}

With this setup in mind, we now state the second main result of this article, which concerns the $L^2$-stability and decay of viscous shock profiles of the Cauchy problem \eqref{eq:homogeneous_KK_viscos_2}-\eqref{eq:homogeneous_KK_viscos_initial_data} for $\mathbf{U}^\eps\in \widetilde{\mathcal U}$.
\begin{theorem}\label{main_theorem_2}
Consider the Cauchy problem \eqref{eq:homogeneous_KK_viscos_2}-\eqref{eq:homogeneous_KK_viscos_initial_data} with initial data $\mathbf{U}^\epsilon_0 \in \widetilde{\mathcal{U}}$. Let $ W_{-}$ be the far-field state as defined in \eqref{eq:initialdataforW} satisfying Assumption \ref{assumption2}. Further, let $\delta$ and $\delta_0$ be two uniform constants. 

For $0<\delta<\delta_0$ with $\delta_0\in(0,1)$, assume that $u_{i+} = u_{i-}-\delta, \, i=1, 2, \ldots, n$, where $u_{i_\pm}$ are defined as in \eqref{eq:farfieldstates} and $\delta$ denotes the shock strength. Let $\widetilde{\mathbf{U}}^\epsilon$ be the viscous shock of \eqref{eq:homogeneous_KK_viscos_2}-\eqref{eq:homogeneous_KK_viscos_initial_data} connecting the constant states $\mathbf{U}_-$ and $\mathbf{U}_+$ defined in \eqref{eq:farfieldstates}. Then for any smooth solution $\mathbf{U}^\epsilon$ of \eqref{eq:homogeneous_KK_viscos_2}--\eqref{eq:homogeneous_KK_viscos_initial_data} with initial data satisfying $\mathbf{U}^\epsilon_0-\widetilde{\mathbf{U}}^\epsilon\in (L^1\,\cap\, L^\infty)(\Omega_s)$, there exists an absolute continuous shift $X(t)$ in $\widetilde{\mathbf{U}}^\eps$ such that for all $t\in (0, T]$, we have the estimate
 \begin{align}\label{eq:decayforu_i}
     {\|u_i^\epsilon(\cdot, t)-\widetilde{u}_i^\epsilon(.-\sigma t-X(t))\|}_{L^2(\Omega_s)} = O(t^{-\frac{1}{4}}),\quad i = 1,\ldots, n,
 \end{align}
 where $\sigma$ is the speed of the inviscid shock corresponding to the genuinely nonlinear field of the inviscid system \eqref{eq:homogeneous_KK}.

 Moreover, for the dynamical shift $X(t)$, we have the estimate
 \begin{align}
     |\dot{X}(t)| = O(t^{-\frac{1}{4}}).
 \end{align}
\end{theorem}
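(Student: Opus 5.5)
The plan is to reduce everything to the scalar equation \eqref{eq:equationforW} for $W^\epsilon=(V^\epsilon)^{1/k}$, prove the decay statement there, and then transfer it back to $\mathbf U^\epsilon$.

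\textbf{Reduction to $W^\epsilon$.} Since $\mathbf U_0^\epsilon\in\widetilde{\mathcal U}$, every initial ratio satisfies $\theta_{i,0}^\epsilon\equiv\gamma_i$. Uniqueness for the transport equation \eqref{eq:ratio_transport_equation_main}, as in Proposition \ref{lem:global_viscous}, then gives $\theta_i^\epsilon\equiv\gamma_i$ for all time. By \eqref{eq:reconstruction_formula},
\[
u_1^\epsilon=W^\epsilon\bar g^{-1/k},\qquad u_i^\epsilon=\gamma_i u_1^\epsilon .
\]
Hence
\[
\|u_i^\epsilon-\widetilde u_i^\epsilon(\cdot-\sigma t-X)\|_{L^2}\le C\,\|W^\epsilon-\widetilde W^\epsilon(\cdot-\sigma t-X)\|_{L^2},
\]
and \eqref{eq:decayforu_i} follows from the corresponding estimate for $W^\epsilon$. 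The hypothesis $u_{i+}=u_{i-}-\delta$ translates into $W_+=W_--\delta'$ with $\delta'\sim\delta$, matching Assumption \ref{assumption2}.

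\textbf{The viscous shock.} I would look for a planar travelling wave $\widetilde W^\epsilon(\xi)$, $\xi=x_1-\sigma t$. Integrating \eqref{eq:equationforW} once gives the first-order ODE
\[
\epsilon kW^k W'=W^{k+1}-\sigma W-\bigl(W_-^{k+1}-\sigma W_-\bigr),
\qquad \sigma=\frac{W_-^{k+1}-W_+^{k+1}}{W_--W_+},
\]
which is the Rankine--Hugoniot speed of the genuinely nonlinear field. Convexity of $W^{k+1}$ and $W_+<W_-$ (the Lax condition) give a monotone heteroclinic orbit $\widetilde W'<0$. Expanding around the end states yields
\[
|\widetilde W'|\sim \frac{\delta^2}{\epsilon}e^{-c\delta|\xi|/\epsilon},\qquad \int|\widetilde W'|\,d\xi=\delta .
\]

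\textbf{Relative entropy estimate.} Next I would apply the relative entropy method with the entropy $\eta(W)=W^2/2$, the shift $X(t)$ and the weighted functional $\int_{\Omega_s} a(\xi)\,\eta(W\,|\,\widetilde W^{X})\,dx$, in the spirit of \cite{MR4195742}. Here $a$ is a weight with $a'\sim\widetilde W'$ and amplitude $O(\delta)$.

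Differentiating in time gives the standard decomposition into four parts:
\begin{itemize}
\item a shift term $\dot X\,Y(W)$;
\item a hyperbolic bad term $\int a'\,q(W;\widetilde W)$, together with a term $\int a\widetilde W' F(W|\widetilde W)$;
\item a good diffusion term $-\epsilon\int a B'(W)|\nabla(W-\widetilde W)|^2$;
\item new remainders coming from the nonlinear viscosity, of the form $\epsilon\int a\,(B'(W)-B'(\widetilde W))\,\nabla\widetilde W\cdot\nabla(W-\widetilde W)$, plus commutator terms with $a'$.
\end{itemize}
I would choose $X$ by the ODE $\dot X=-\frac{M}{\delta}Y(W)$, so that the shift produces the good term $-\frac{\delta}{M}|\dot X|^2$. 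The diffusion coercivity is uniform because $B'(W)\ge km_0$ by Proposition \ref{lem:global_viscous}. The viscosity remainders are bounded by Young's inequality using $|B'(W)-B'(\widetilde W)|\le C|W-\widetilde W|$ and $|\nabla\widetilde W|\le C\delta^2/\epsilon$, which makes them absorbable into the diffusion and the hyperbolic good term for $\delta<\delta_0$. I expect Assumption \ref{assumption2} to be exactly what fixes the sign of the $k$-dependent coefficient here, namely $M_0\|W^{k+1}\|_\infty\le m_0W_-^{k+1}$.

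\textbf{Main obstacle: the one-dimensional weighted Poincaré step.} The key inequality must show that the bad hyperbolic term is dominated by the shift and diffusion terms. Following Kang--Vasseur, I would localize to the shock layer, rescale with $y=(\widetilde W-W_-)/(W_+-W_-)\in(0,1)$, and apply the sharp Poincaré inequality
\[
\int_0^1|f-\bar f|^2\le\frac12\int_0^1 y(1-y)|f'|^2 .
\]
The nonlinear viscosity only perturbs the Jacobian $d y/d\xi$ by a factor $B'(\widetilde W)/B'(W_-)=1+O(\delta)$, so the argument should survive. Transverse derivatives on $\mathbb T^{d-1}$ only contribute additional good terms.

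\textbf{Decay rate and shift bound.} The previous steps give the contraction
\[
\frac{d}{dt}\mathcal E+\mathcal D\le 0,
\qquad \mathcal E=\int a\,\eta(W\,|\,\widetilde W^X)\,dx,\quad \mathcal D\gtrsim\epsilon\|\nabla(W-\widetilde W^X)\|_{L^2}^2 .
\]
Since the perturbation lies in $L^1\cap L^\infty$, its $L^1$ norm stays bounded by the $L^1$ contraction of \eqref{eq:absolute_value_W}. A Nash-type interpolation in the unbounded direction $x_1$,
\[
\|w\|_{L^2}^2\le C\|w\|_{L^1}^{4/3}\|\partial_{x_1}w\|_{L^2}^{2/3},
\]
yields $\dot{\mathcal E}\le -c\,\mathcal E^3$, hence $\mathcal E\lesssim t^{-1/2}$. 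This gives the $O(t^{-1/4})$ rate in $L^2$.

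Finally, $|\dot X|\le \frac{C}{\delta}|Y(W)|\le C\,\|W-\widetilde W^X\|_{L^2}$, using the Cauchy--Schwarz inequality with the integrable weight $\widetilde W'$. This gives $|\dot X(t)|=O(t^{-1/4})$.
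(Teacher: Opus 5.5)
\textbf{Overall assessment.} Your reduction to $W^\epsilon$ (constancy of $\theta_i^\epsilon$, $u_1^\epsilon=\bar g^{-1/k}W^\epsilon$, $u_i^\epsilon=\gamma_iu_1^\epsilon$) matches the paper. So do the profile ODE, the shift ODE, the Young-inequality treatment of the nonlinear-viscosity remainder, and the Cauchy--Schwarz bound on $\dot X$. The paper, however, works without a weight.

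\textbf{The genuine gap is in your Poincar\'e step.} You claim the nonlinear viscosity only changes $dz/d\xi$ by a factor $B'(\widetilde W^\epsilon)/B'(W_-)=1+O(\delta)$. But the dissipation $\epsilon\int_{\Omega_s}B'(W^\epsilon_X)|\nabla(W^\epsilon_X-\widetilde W^\epsilon)|^2$ carries $B'$ evaluated at the \emph{solution}. After the change of variables it is, to leading order, $\frac{\delta f_1''(W_-)}{2}\int_{\mathbb T^{d-1}}\int_0^1\frac{B'(W^\epsilon_X)}{B'(\widetilde W^\epsilon)}\,z(1-z)|\partial_z\mathcal W|^2\,dz\,dx'$. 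For large perturbations this ratio is only bounded below by $m_0/M_0$.

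Likewise, $f_1=W^{k+1}$ is not quadratic. So $f_1(W^\epsilon_X|\widetilde W^\epsilon)$ is only bounded by $\frac12\|f_1''\|_{L^\infty}|W^\epsilon_X-\widetilde W^\epsilon|^2$, not by $\frac12f_1''(W_-)(1+O(\delta))|W^\epsilon_X-\widetilde W^\epsilon|^2$. The sharp Poincar\'e inequality leaves only a factor-two margin, so these $O(1)$ ratios decide whether the estimate closes.

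This is exactly where the paper uses Assumption~\ref{assumption2}. It takes $B'(W^\epsilon)\ge km_0$ and $B'(\widetilde W^\epsilon)\le kM_0$ in \eqref{eq:estimateG}, bounds $\mathcal B_1$ through $\|f_1''\|_{L^\infty}$ in \eqref{eq:estimateB_1}, and needs $\|f_1''\|_{L^\infty}\lesssim\frac{m_0}{M_0}f_1''(W_-)$ to get $R_2\le0$ in \eqref{eq:R2r}. You attach the assumption to the remainder absorption instead, which in fact closes by smallness of $\delta$ alone.

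A weight of amplitude $O(\delta)$ does not restore the missing coercivity either. To handle genuinely large perturbations along your weighted route, you would need to separate small and large values of $W^\epsilon_X-\widetilde W^\epsilon$. For example, truncate at a small level so that both ratios are $1+o(1)$ on the truncated part, and use a weight of amplitude large compared with $\delta$ to control the rest. Nothing of this kind is in your plan.

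Read literally, the inequality in Assumption~\ref{assumption2} cannot hold, since $M_0>m_0$ by homogeneity of $V$ and $\|W^\epsilon\|_{L^\infty}\ge W_-$. What the unweighted computation actually needs is $\frac{m_0f_1''(W_-)}{M_0\|f_1''\|_{L^\infty}}>\frac12$ up to $O(\delta)$ margins, once the factor $\frac12$ in the bound on $f_1(\cdot|\cdot)$ is kept.

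\textbf{Two further steps need repair.}

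First, in $d\ge2$ the transverse dissipation is not merely an extra good term. Since $X$ does not depend on $x'$, the shift yields only $-\bigl(\int_{\mathbb T^{d-1}}\overline{\mathcal W}\,dx'\bigr)^2$, whereas the Poincar\'e step leaves $\int_{\mathbb T^{d-1}}\overline{\mathcal W}^2\,dx'$. The difference must be absorbed by the $x'$-dissipation through the Poincar\'e inequality on $\mathbb T^{d-1}$. This is the paper's $R_1\le0$ step, and it forces $\delta$ small relative to $\epsilon$.

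Second, in the decay step the $L^1$-contraction applies to two solutions. It therefore controls $W^\epsilon-\widetilde W^\epsilon(\cdot-\sigma t)$, but not the shifted difference. You only get $\|W^\epsilon-\widetilde W^\epsilon(\cdot-\sigma t-X(t))\|_{L^1}\le\|W^\epsilon_0-\widetilde W^\epsilon\|_{L^1}+\delta|X(t)|$, and with only $|X(t)|\lesssim t$ the Nash argument gives no rate. You must first show that $X$ is bounded. One way: the two translates of the profile differ by at least $\delta/2$ on a set of measure $\gtrsim|X(t)|-C\epsilon/\delta$. Meanwhile $\|W^\epsilon-\widetilde W^\epsilon(\cdot-\sigma t)\|_{L^2}$ stays bounded (via $L^1\cap L^\infty$), and so does the shifted $L^2$ norm (by contraction).

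Also, on $\mathbb R\times\mathbb T^{d-1}$ the one-dimensional Nash inequality should be applied to the $x'$-average, with the remainder controlled by transverse dissipation. The paper omits the decay proof altogether and defers to Kang--Oh. With these additions, your Nash argument would be a reasonable self-contained substitute.
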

In the following Sections, we prove Theorem \ref{main_theorem_2} by utilizing the mathematical structure \eqref{eq:scalar_velocity_equation_main}-\eqref{eq:ratio_transport_equation_main} of the viscous system \eqref{eq:homogeneous_KK_viscos_2}.
\subsection{Existence and time asymptotic decay of the viscous shock profile for \eqref{eq:equationforW}-\eqref{eq:initialdataforW} }\label{sectionforW}
In this Section, we prove the existence and time-asymptotic stability of the viscous shock profile for the Cauchy problem \eqref{eq:equationforW}-\eqref{eq:initialdataforW}. We prove the existence of the viscous shock profile in the following Lemma.
\begin{lemma}\label{lem:existenceofviscouswave}
    Let $W_->W_+$ be as defined in \eqref{eq:initialdataforW}. Suppose that the shock speed $\sigma$ is determined by the Rankine-Hugoniot relation $\sigma = \frac{f_1(W_-)-f_1(W_+)}{W_--W_+}$, where $f_1=W^{k+1}$. Then, for every $\epsilon>0$, there exists a planar viscous wave profile  $\widetilde{W}^\epsilon(\xi) = \widetilde{W}^\epsilon(x_1-\sigma t)$ of \eqref{eq:equationforW}-\eqref{eq:initialdataforW} satisfying the boundary value problem
    \begin{align}\label{eq:viscouseqforW}
    \begin{cases}
    -\sigma\partial_\xi\widetilde{W}^\epsilon + \partial_\xi f_1(\widetilde{W}^\epsilon) = \epsilon \partial_{\xi \xi}B(\widetilde{W}^\epsilon),\\
    \widetilde{W}^\epsilon|_{\pm \infty} = W_\pm.
    \end{cases}
    \end{align}
    Moreover, the viscous profile is monotonically decreasing and unique up to a translation. 
\end{lemma}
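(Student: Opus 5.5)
Integrating the ODE in \eqref{eq:viscouseqforW} once reduces the problem to a scalar first-order autonomous equation, which is then solved by separation of variables. Since the profile must approach the constants $W_\pm$ with vanishing derivatives at $\xi\to\pm\infty$, integrating \eqref{eq:viscouseqforW} from $-\infty$ to $\xi$ gives
\begin{equation*}
\epsilon\,B'(\widetilde W^\epsilon)\,\partial_\xi\widetilde W^\epsilon
= f_1(\widetilde W^\epsilon)-\sigma \widetilde W^\epsilon-\bigl(f_1(W_-)-\sigma W_-\bigr)=:h(\widetilde W^\epsilon).
\end{equation*}
The Rankine--Hugoniot choice of $\sigma$ is exactly the condition $h(W_+)=h(W_-)=0$, so the constant of integration is consistent at both ends. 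The next step is a sign analysis of $h$. Because $f_1(W)=W^{k+1}$ with $k>0$ is strictly convex on $(0,\infty)$, the function $h$ is strictly convex and vanishes at the two points $W_+<W_-$. Hence $h<0$ on $(W_+,W_-)$, and $h'(W_-)=f_1'(W_-)-\sigma>0$ and $h'(W_+)=f_1'(W_+)-\sigma<0$; this is the Lax entropy condition $f_1'(W_+)<\sigma<f_1'(W_-)$. Moreover $B'(W)=kW^k$ is strictly positive and bounded above and below on $[W_+,W_-]$, since $W_+>0$ in the uniformly positive state space.

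The ODE $\partial_\xi \widetilde W=h(\widetilde W)/(\epsilon B'(\widetilde W))=:G(\widetilde W)$ is autonomous with $G$ smooth on a neighbourhood of $[W_+,W_-]$. It has simple zeros at $W_\pm$ and $G<0$ strictly between them. I would then fix $\widetilde W^\epsilon(0)=\bar w:=(W_-+W_+)/2$ and define the profile implicitly by
\begin{equation*}
\xi=\int_{\bar w}^{\widetilde W^\epsilon(\xi)}\frac{\epsilon B'(s)}{h(s)}\,ds .
\end{equation*}
Since $h$ has simple zeros at $W_\pm$, the integrand behaves like $c/(s-W_\pm)$ near the endpoints, so the integral diverges logarithmically as $\widetilde W\to W_\pm$. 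Therefore the map $\widetilde W\mapsto\xi$ is a smooth, strictly decreasing bijection from $(W_+,W_-)$ onto $\mathbb R$. Its inverse is the desired profile: it is smooth, strictly decreasing because $G<0$, and satisfies $\widetilde W^\epsilon(\pm\infty)=W_\pm$. The same simple-zero structure gives exponential convergence to the end states, with rates $h'(W_\pm)/(\epsilon B'(W_\pm))$. This justifies that $\partial_\xi\widetilde W^\epsilon\to 0$ at $\pm\infty$, so differentiating back recovers \eqref{eq:viscouseqforW}.

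For uniqueness up to translation, the integration step must hold for any solution. I would argue that any profile tending to $W_\pm$ has $\epsilon\partial_\xi B(\widetilde W)\to 0$ along some sequence, since $B(\widetilde W)$ has finite limits, and the once-integrated quantity is constant; so every profile solves the first-order ODE with the same constant. By uniqueness for the Lipschitz autonomous ODE, a solution that lies at some point in $(W_+,W_-)$ stays there, so it never touches the equilibria. Any two such solutions are translates of one another, because they are both orbits of the same autonomous scalar equation through the interval. A solution taking a value outside $[W_+,W_-]$ cannot have limits $W_-$ at $-\infty$ and $W_+$ at $+\infty$, by the sign of $G$ outside the interval. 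The main obstacle I anticipate is this boundary-behaviour justification, namely that the integration constant is forced and the derivative vanishes at infinity. I would handle it by the sequence argument just described combined with the explicit exponential tails.
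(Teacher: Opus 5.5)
Your argument is correct, and it differs from the paper mainly in being an actual proof: the paper's own proof consists only of a reference to the classical theory for convex viscous scalar conservation laws of Il'in--Oleinik \cite{ilin1960asymptotic}.

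That classical theory is formulated for linear viscosity $u_{xx}$, whereas here the dissipation is $\epsilon\,\partial_{\xi\xi}B(\widetilde W^\epsilon)$ with $B'(W)=kW^k$. Your quadrature argument is what justifies the transfer. The only new ingredient is that $B'$ is bounded above and below by positive constants on $[W_+,W_-]$. Dividing by it turns $\epsilon B'(\widetilde W)\partial_\xi\widetilde W=h(\widetilde W)$ into an autonomous scalar ODE with the same phase-line structure as in the linear case.

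Your route also gives more than the lemma states:
\begin{itemize}
\item the Lax condition $f_1'(W_+)<\sigma<f_1'(W_-)$;
\item exponential convergence to $W_\pm$ at rates $|h'(W_\pm)|/(\epsilon B'(W_\pm))$;
\item the explicit scaling $\widetilde W^\epsilon(\xi)=\widetilde W^1(\xi/\epsilon)$;
\item the quantitative bound $|\partial_\xi\widetilde W^\epsilon|\le \max_{[W_+,W_-]}|h|/(\epsilon kW_+^k)\le \|f_1''\|_{L^\infty(W_+,W_-)}(W_--W_+)^2/(8\epsilon kW_+^k)$.
\end{itemize}
The last bound makes explicit the $\epsilon$-dependence of the profile estimates that are invoked later in the stability analysis.

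Your uniqueness argument is also sound. You force the integration constant using a sequence along which $\partial_\xi B(\widetilde W)\to 0$, then use that solutions cannot cross the equilibria $W_\pm$, together with translation invariance of the autonomous ODE. The one case you do not mention explicitly, a solution that takes the value $W_+$ or $W_-$ somewhere, is constant by ODE uniqueness and is trivially excluded by the boundary conditions.

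The citation in the paper gains only brevity. Your argument gains a self-contained proof that genuinely covers the nonlinear-viscosity case, plus the profile bounds needed later.
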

\begin{proof}
The proof of this Lemma follows directly from the classical theory for convex viscous scalar conservation laws; see \cite{ilin1960asymptotic} for more details.
\end{proof}
In what follows, we derive the contraction estimate and decay of large perturbations of the viscous shock for $W^\epsilon$, which is crucial to prove Theorem \ref{main_theorem_2}. The decay and $L^2$-contraction property for $W^\epsilon$ is established in Lemma \ref{lem:forW}. In order to prove Lemma \ref{lem:forW},  we need the following Poincaré-type inequality and a well-defined shift function $X(t)$.
\begin{lemma}[\cite{MR4195742}, Lemma 2.9]
   Suppose that $g:[0,1]\longrightarrow \mathbb{R}$ be a continuously differentiable function such that $\int_0^1z(1-z)|g'|^2\,dz<\infty$. Then the following holds
   \begin{align}\label{eq: poincare_type_inequality}
       \int_0^1\left|g-\int_0^1g \,dz\right|^2\,dz\leq \dfrac{1}{2}\int_0^1z(1-z)|g'|^2\,dz.
   \end{align}
\end{lemma}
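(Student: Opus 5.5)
The plan is to prove \eqref{eq: poincare_type_inequality} by expanding $g$ in shifted Legendre polynomials. These are exactly the eigenfunctions of the degenerate Sturm--Liouville operator $\mathcal L h:=-\bigl(z(1-z)h'\bigr)'$ on $[0,1]$. Let $P_n(z)$ denote the Legendre polynomial of degree $n$ evaluated at $2z-1$. We will use four standard facts:
\begin{itemize}
\item $\mathcal L P_n=n(n+1)P_n$;
\item $\{P_n\}_{n\geq0}$ is an orthogonal basis of $L^2(0,1)$;
\item $P_0\equiv1$;
\item the weight $w(z):=z(1-z)$ vanishes at both endpoints.
\end{itemize}
The first nonzero eigenvalue is $1\cdot 2=2$, which explains the constant $\tfrac12$ in the inequality.

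\textbf{Step 1 (coefficients of $g$).} Set $c_n:=\int_0^1 gP_n\,dz\big/\|P_n\|_{L^2}^2$. Then $c_0=\int_0^1 g\,dz$. Since $g\in C^1([0,1])$ is in particular in $L^2(0,1)$, Parseval's identity gives
\[
\int_0^1\Bigl|g-\int_0^1 g\,dz\Bigr|^2dz=\sum_{n\geq1}|c_n|^2\|P_n\|_{L^2}^2 .
\]

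\textbf{Step 2 (integration by parts with the degenerate weight).} For any $h\in C^1([0,1])$,
\[
\int_0^1 w\,h'P_n'\,dz=-\int_0^1 h\,(wP_n')'\,dz=n(n+1)\int_0^1 hP_n\,dz .
\]
The boundary terms vanish because $w(0)=w(1)=0$ and $h$, $P_n'$ are bounded. Taking $h=P_m$ shows that $\{P_n'\}_{n\geq1}$ is orthogonal in $L^2_w(0,1)$, with $\|P_n'\|_{L^2_w}^2=n(n+1)\|P_n\|_{L^2}^2$. Taking $h=g$ gives $\langle g',P_n'\rangle_{L^2_w}=n(n+1)c_n\|P_n\|_{L^2}^2$. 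The finiteness of $\int_0^1 w|g'|^2\,dz$ is what we need here; for $g\in C^1([0,1])$ it holds automatically.

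\textbf{Step 3 (Bessel and conclusion).} By Bessel's inequality for the orthogonal family $\{P_n'\}_{n\geq1}$ in $L^2_w$ (completeness is not needed),
\[
\int_0^1 z(1-z)|g'|^2dz\;\geq\;\sum_{n\geq1}\frac{|\langle g',P_n'\rangle_{L^2_w}|^2}{\|P_n'\|_{L^2_w}^2}=\sum_{n\geq1}n(n+1)|c_n|^2\|P_n\|_{L^2}^2\;\geq\;2\sum_{n\geq1}|c_n|^2\|P_n\|_{L^2}^2 .
\]
Combining this with Step 1 yields \eqref{eq: poincare_type_inequality}.

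The only delicate point is Step 2: we must justify the vanishing boundary terms and the weighted orthogonality of $P_n'$ despite the degeneracy of the weight at $z=0,1$. Since $g\in C^1$ on the closed interval and the $P_n$ are polynomials, this is routine. The remaining ingredients, namely the Legendre eigenvalue relation and the $L^2$-completeness of $\{P_n\}$, are classical and will be quoted.
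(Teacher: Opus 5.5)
Your proof is correct and complete for the statement as written. The paper gives no argument of its own here; it simply quotes the inequality from \cite{MR4195742}, so there is no in-paper proof to compare against. Your route is the standard spectral proof of this sharp inequality: expand in shifted Legendre polynomials, which are the eigenfunctions of $-(z(1-z)h')'$ with first nonzero eigenvalue $2$ (equality holds for $g=2z-1$). Using Bessel's inequality for the orthogonal family $\{P_n'\}_{n\geq1}$ in $L^2_w(0,1)$ is a good choice. It spares you from proving completeness of that family, or the identity $\int_0^1 z(1-z)|g'|^2\,dz=\sum_{n\geq1}n(n+1)|c_n|^2\|P_n\|_{L^2}^2$.

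One remark on the hypothesis. As you note, for $g\in C^1([0,1])$ the weighted-energy assumption is redundant. However, the function $\mathcal{W}$ to which the lemma is applied in the proof of Lemma~\ref{lem:forW} need not be $C^1$ up to $z=0,1$. Indeed, $\partial_z\mathcal{W}=-\delta\,\partial_\xi(W^\epsilon_X-\widetilde{W}^\epsilon)/\partial_\xi\widetilde{W}^\epsilon$, and $\partial_\xi\widetilde{W}^\epsilon\to0$ at the endpoints. Your result still covers that case by a dilation argument:
\begin{enumerate}
\item Apply your inequality to $g(\eta+(1-2\eta)z)$.
\item Use $(z-\eta)(1-\eta-z)\leq z(1-z)$ on $[\eta,1-\eta]$ to get $\int_\eta^{1-\eta}|g-m_\eta|^2\,dz\leq\frac12\int_0^1z(1-z)|g'|^2\,dz$, where $m_\eta$ is the average of $g$ over $[\eta,1-\eta]$.
\item Let $\eta\to0$. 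Finite weighted energy already forces $g\in L^2(0,1)$, so $m_\eta\to\int_0^1g\,dz$ and Fatou's lemma applies.
\end{enumerate}
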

Now we introduce $X(t)$ as a solution of the following initial value problem
\begin{align}\label{eq:shiftfunction}
\left\{
\begin{aligned}
    \dot{X}(t)
    &= -\dfrac{M}{\delta}\int_{\Omega_s}
    \left(W^\epsilon(x,t)-\widetilde{W}^\epsilon(x_1-\sigma t-X(t))\right)
    (\widetilde{W}^{\epsilon})'(x_1-\sigma t-X(t))\,\mathrm{d}x,\\
    X(0) &= 0.
\end{aligned}
\right.
\end{align}
In \eqref{eq:shiftfunction}, $\delta \in (0,1)$ is a sufficiently small constant and the specific constant $M$ is chosen as $M := {\|f_1''\|}_{L^\infty}+C\delta{\|B''\|}_{L^\infty}$, which will be used in the proof of Lemma \ref{lem:forW}. The existence of $X(t)$ follows from the a priori $L^\infty$-bound of $W^\eps$ obtained in Proposition \ref{lem:global_viscous} and $\partial_\xi\widetilde{W}^{\epsilon}, \partial_{\xi \xi}\widetilde{W}^{\epsilon}\in {L}^p$ for $1\leq p< \infty$. Since our proof is based on the relative entropy method up to a dynamical shift $X(t)$, we define the following relative entropy functional corresponding to a strictly convex entropy $E$.
\begin{align}\label{eq:relativeentropy}
    \mathcal{E}(W^\epsilon|\widetilde{W}^\epsilon) := {E}(W^\epsilon)-{E}(\widetilde{W}^\epsilon) - E'(\widetilde{W}^\epsilon)(W^\epsilon-\widetilde{W}^\epsilon).
\end{align}
As a consequence of the strictly convex entropy $E$, there exist constants $C_1, C_2>0$ depending only on ${\|\mathbf{U}_0^\eps\|}_{L^{\infty}(\Omega_T)}$, such that the following holds (see \cite{dafermos2005hyperbolic} for more details).
\begin{align}\label{eq:bound_on_E}
C_1(W^\epsilon-\widetilde{W}^\epsilon)^2\leq\mathcal{E}\left(W^\epsilon|\widetilde{W}^\epsilon\right)\leq C_2(W^\epsilon-\widetilde{W}^\epsilon)^2 .
\end{align}
We further define the corresponding relative entropy flux as $\mathcal{\mathbf{Q}} := (\mathcal{Q}_1,\ldots,\mathcal{Q}_d)$, where $\mathcal{Q}_i$'s are defined as
\begin{align}\label{eq:relativeentropyflux}
\mathcal{Q}_i(W^\epsilon|\widetilde{W}^\epsilon) = q_i(W^\epsilon)-q_i(\widetilde{W}^\epsilon)-E'(\widetilde{W}^\epsilon)(f_i(W^\epsilon)-f_i(\widetilde{W}^\epsilon)), \quad i= 1,\ldots, d.
\end{align}
In \eqref{eq:relativeentropyflux}, the entropy fluxes  $q_i$ satisfy $q_i' = E'f_i'$ for $i=1, 2,\ldots, d$. Henceforth, for notational simplicity we use $W^\epsilon_{\pm X}( \xi, x',t):= W^\epsilon( \xi\pm X(t), x',t)$.
\begin{lemma}\label{lem:forW}
    Consider the Cauchy problem  \eqref{eq:equationforW}-\eqref{eq:initialdataforW} with a given far-field state $W_-$ defined in \eqref{eq:initialdataforW} satisfying Assumption \ref{assumption2} and let $\widetilde{W}^\epsilon$ be the viscous shock satisfying \eqref{eq:viscouseqforW}. Then for any solution $W^\epsilon$ of \eqref{eq:equationforW}-\eqref{eq:initialdataforW} with initial data satisfying $W_0^\epsilon- \widetilde{W}^\epsilon\in (L^2\cap L^\infty)(\Omega_s)$, there exists an absolute continuous shift $X(t)$ such that the following estimate holds.
 \begin{align}\label{eq:contractionforW}
     \int_{\Omega_s} \left|W^\epsilon(x,t)-\widetilde{W}^\epsilon(x_1-\sigma t-X(t))\right|^2dx \leq \int_{\Omega_s}\left|W_0^\epsilon-\widetilde{W}^\epsilon\right|^2\,dx.
 \end{align}
 Moreover, suppose that $W_0^\epsilon-\widetilde{W}^\epsilon\in (L^1\cap L^\infty)(\Omega_s)$. Then we have the following time decay estimate for $W^\epsilon$ and $\dot{X}(t)$ for all $t\in (0,T]$.
 \begin{align}\label{eq:decayinW}
     {\|W^\epsilon(\cdot, t)-\widetilde{W}^\epsilon(\cdot-\sigma t-X(t))\|}_{L^2(\Omega_s)} = O(t^{-\frac{1}{4}}),
 \end{align}
 and 
 \begin{align}
     |\dot{X}(t)| = O(t^{-\frac{1}{4}}).
 \end{align}
\end{lemma}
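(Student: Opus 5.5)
We follow the relative-entropy-with-shift method of Kang--Vasseur--Wang (\cite{MR4195742}), adapted to the nonlinear viscosity $B(W)=\frac{k}{k+1}W^{k+1}$. We work in the moving frame $\xi=x_1-\sigma t$. We take the quadratic entropy $E(W)=\frac12 W^2$, so that $\mathcal E(W^\epsilon|\widetilde W^\epsilon)=\frac12|W^\epsilon-\widetilde W^\epsilon|^2$ and \eqref{eq:bound_on_E} is trivial. We also write $\widetilde W^\epsilon_{-X}:=\widetilde W^\epsilon(\xi-X(t))$.

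The first step is to differentiate $\int_{\Omega_s}\mathcal E(W^\epsilon|\widetilde W^\epsilon_{-X})\,dx$ in time. Using \eqref{eq:equationforW}, the profile equation \eqref{eq:viscouseqforW}, and integration by parts in $\xi$ and $x'$, we aim for a decomposition
\[
\frac{d}{dt}\int_{\Omega_s}\mathcal E\,dx=\dot X(t)\,Y(t)+\mathcal J_{\rm bad}(t)-\mathcal G(t).
\]
The term $Y(t)=-\int_{\Omega_s}(W^\epsilon-\widetilde W^\epsilon_{-X})(\widetilde W^\epsilon_{-X})'\,dx+(\text{higher order})$ comes from differentiating the shifted profile. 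The hyperbolic bad term is $\mathcal J_{\rm bad}=-\int (\widetilde W^\epsilon_{-X})'\,f_1(W^\epsilon|\widetilde W^\epsilon_{-X})\,dx$, where $f_1(\cdot|\cdot)$ denotes the relative flux. The good diffusion term is $\mathcal G=\epsilon\int B'(W^\epsilon)|\nabla(W^\epsilon-\widetilde W^\epsilon_{-X})|^2\,dx$, which is bounded below by $\epsilon k m_0\int|\nabla(W^\epsilon-\widetilde W^\epsilon_{-X})|^2\,dx$ thanks to \eqref{eq:Bprime_bounds}.

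Because $B$ is nonlinear, an extra remainder appears:
\[
\epsilon\int \partial_\xi(W^\epsilon-\widetilde W^\epsilon_{-X})\,\bigl(B'(W^\epsilon)-B'(\widetilde W^\epsilon_{-X})\bigr)(\widetilde W^\epsilon_{-X})'\,dx .
\]
We bound it by Young's inequality, absorbing half into $\mathcal G$. What remains is controlled by $C\epsilon\|B''\|_{L^\infty}\int|(\widetilde W^\epsilon)'|^2|W^\epsilon-\widetilde W^\epsilon|^2\,dx$. Since $|(\widetilde W^\epsilon)'|\lesssim \delta^2/\epsilon$ (this follows from the ODE for the profile), the leftover is a $\delta$-small multiple of $\int|(\widetilde W^\epsilon)'||W^\epsilon-\widetilde W^\epsilon|^2\,dx$. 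This is why $M$ in \eqref{eq:shiftfunction} carries the extra term $C\delta\|B''\|_{L^\infty}$. The pointwise bound $M_0\|(W^\epsilon)^{k+1}\|_{L^\infty}\le m_0W_-^{k+1}$ from Assumption \ref{assumption2} is used to compare the diffusion coefficient $B'(W^\epsilon)$ with its value on the profile, so that the weight constants in the Poincaré step are uniform.

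The second step uses the shift. With $\dot X$ defined by \eqref{eq:shiftfunction}, we get $\dot X\,Y=-\frac{\delta}{M}|\dot X|^2+\dots$, giving a good term $-\frac{\delta}{2M}|\dot X|^2$ plus a term $\frac{M}{\delta}\bigl(\int (W^\epsilon-\widetilde W^\epsilon_{-X})(\widetilde W^\epsilon_{-X})'\,dx\bigr)^2$. We then pass to the variable $z=\frac{W_--\widetilde W^\epsilon(\xi)}{\delta}\in(0,1)$, with $\delta=W_--W_+$. This transfers the $\xi$-integrals weighted by $(\widetilde W^\epsilon)'$ to $[0,1]$, and the diffusion in $\xi$ becomes a weighted diffusion with weight comparable to $z(1-z)$, using $\epsilon\,\partial_\xi z\approx \frac{\delta}{2kW_-^k}(f_1''/2)\,z(1-z)$ for weak shocks. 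We first average over $x'\in\mathbb T^{d-1}$; the $x'$-oscillation is controlled directly by $\mathcal G$ through the torus Poincaré inequality. Applying \eqref{eq: poincare_type_inequality} to the $x'$-averaged perturbation, the cubic expansion of $f_1(\cdot|\cdot)$ (with the $|W^\epsilon-\widetilde W^\epsilon|^3$ term handled by the $L^\infty$ bound of Proposition \ref{lem:global_viscous} and smallness of $\delta$) shows that $\mathcal J_{\rm bad}$ plus the shift-induced term is dominated by the good terms. The result is
\[
\frac{d}{dt}\int\mathcal E\,dx+c\,\mathcal G+\frac{\delta}{4M}|\dot X|^2\le 0,
\]
and \eqref{eq:contractionforW} follows by integration. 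This Poincaré matching step, with the nonlinear-viscosity remainders and the exact constants $1/2$ versus $M$, is the main obstacle. We would first prove it for weak shocks ($\delta_0$ small), tracking all constants explicitly.

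The third step gives the decay. Assuming additionally $W_0^\epsilon-\widetilde W^\epsilon\in L^1$, we use the $L^1$-contraction for \eqref{eq:equationforW}, obtained as in \eqref{eq:absolute_value_W} without weight, which yields a uniform $L^1$ bound on the perturbation; the shift only costs $O(\delta|X|)$, controlled via $\dot X\in L^2$. The Gagliardo--Nirenberg/Nash inequality in $\Omega_s$ (effectively one-dimensional in $\xi$ after the torus average) gives $\|w\|_{L^2}^{6}\lesssim \|w\|_{L^1}^{4}\|\nabla w\|_{L^2}^2$. Combined with the dissipation inequality this gives $y'\le -c\,y^3$ for $y=\|w\|_{L^2}^2$, hence $y\lesssim t^{-1/2}$, which is \eqref{eq:decayinW}. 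Finally, \eqref{eq:shiftfunction} and Cauchy--Schwarz give $|\dot X|\le \frac{M}{\delta}\|w\|_{L^2}\|(\widetilde W^\epsilon)'\|_{L^2}=O(t^{-1/4})$.
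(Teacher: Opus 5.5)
Your contraction argument follows the paper's route: quadratic entropy, the splitting $\dot X\mathcal Y+\mathcal B-\mathcal G$, the variable $z$, and the weighted Poincar\'e inequality on $[0,1]$ together with the torus Poincar\'e inequality in $x'$. Two of your steps there fail as written, and the decay step, which the paper delegates to Kang--Oh, has a genuine gap.

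\textbf{Contraction.} First, you claim the cubic part of $f_1(W^\epsilon|\widetilde W^\epsilon)$ is controlled by the $L^\infty$ bound and the smallness of $\delta$. This is false for large perturbations. After the change of variables the quadratic and cubic terms carry the same weight $|(\widetilde W^\epsilon)'|\,d\xi=\delta\,dz$. Hence $\int|(\widetilde W^\epsilon)'||w|^3\le\|w\|_{L^\infty}\int|(\widetilde W^\epsilon)'||w|^2$ with $\|w\|_{L^\infty}=O(1)$, which is of the same order as the leading term. The same applies to the ratio $B'(W^\epsilon)/B'(\widetilde W^\epsilon)$ in the diffusion. The paper does not expand around $W_-$. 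It bounds the hyperbolic term by $\delta\|f_1''\|_{L^\infty}\int\mathcal W^2$ over the whole invariant range, uses $km_0\le B'\le kM_0$ in $\mathcal G$ and in $dz/d\xi$, and then needs the explicit sign condition \eqref{eq:R2r}. That is exactly the constant matching you leave as ``the main obstacle'', and it is where Assumption~\ref{assumption2} enters. Note that this assumption is an \emph{upper} bound on $W^\epsilon$, so it cannot give the lower bound on $B'(W^\epsilon)/B'(\widetilde W^\epsilon)$ you attribute to it.

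Second, absorbing half of $\mathcal G$ in the nonlinear-viscosity remainder destroys the margin this matching needs. With the hyperbolic term normalized sharply to $\int_0^1\mathcal W^2dz$, the leading diffusion coefficient is $1$, against the sharp constant $\tfrac12$ of the Poincar\'e-type inequality. After halving, nothing is left for the $O(\delta)$ errors or for the $c\,\mathcal G$ you keep for the decay step. Since $\epsilon|(\widetilde W^\epsilon)'|\lesssim\delta^2$, absorb only a $\delta$-fraction: $\epsilon\|B''\|_{L^\infty}|w||\partial_\xi w||(\widetilde W^\epsilon)'|\le\delta\epsilon km_0|\partial_\xi w|^2+C\delta|(\widetilde W^\epsilon)'||w|^2$.

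\textbf{Decay.} The gap is in the control of the shift. Knowing $\dot X\in L^2(0,\infty)$ only gives $|X(t)|\le\sqrt t\,\|\dot X\|_{L^2(0,t)}$. So $A(t):=\|w(t)\|_{L^1}\le\|w_0\|_{L^1}+\delta|X(t)|$ may grow like $\sqrt t$. Then $y'\le-cy^3/A^4$ becomes $y'\le-cy^3/(1+t)^2$, which yields only $y(t)^{-2}\ge y(0)^{-2}+2ct/(1+t)$, i.e.\ no decay.

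What you need is $\sup_t|X(t)|<\infty$, and it follows from ingredients you already have.
\begin{enumerate}
\item The $L^1$-contraction against the \emph{unshifted} wave, together with the $L^\infty$ bounds, gives $\sup_t\|W^\epsilon(t)-\widetilde W^\epsilon(\cdot-\sigma t)\|_{L^2}^2\le C\|W_0^\epsilon-\widetilde W^\epsilon\|_{L^1}$.
\item Combined with \eqref{eq:contractionforW}, this bounds $\|\widetilde W^\epsilon-\widetilde W^\epsilon(\cdot-X(t))\|_{L^2}$ uniformly in $t$.
\item Monotonicity of the profile gives $\|\widetilde W^\epsilon-\widetilde W^\epsilon(\cdot-X)\|_{L^2}^2\ge\frac{\delta^2}{4}(|X|-2L)$, where $\widetilde W^\epsilon$ lies within $\delta/4$ of $W_\pm$ outside $[-L,L]$.
\end{enumerate}
Hence $X$ and $A$ are bounded, and your differential inequality closes.

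Finally, for $d\ge2$ the inequality $\|w\|_{L^2}^6\lesssim\|w\|_{L^1}^4\|\nabla w\|_{L^2}^2$ is false on $\mathbb R\times\mathbb T^{d-1}$; concentrated bumps violate it. Use the split you hint at: one-dimensional Nash for the $x'$-average and the torus Poincar\'e inequality for the oscillation. This gives $y\le C(A^{4/3}D^{1/3}+D)$ with $D=\|\nabla w\|_{L^2}^2$, which still yields $y\lesssim t^{-1/2}$ because $y$ is bounded.
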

\begin{proof}
   In view of \eqref{eq:relativeentropy}, we take the time derivative of the averaged relative entropy functional $\mathcal{E}$.  Using \eqref{eq:equationforW}, \eqref{eq:viscouseqforW}, \eqref{eq:relativeentropy}, and \eqref{eq:relativeentropyflux}, it yields
\begin{align}
        \dfrac{d}{dt}\int_{\Omega_s}&\mathcal{E}\left(W^\epsilon\,\big|\,\widetilde{W}_{-X}^\epsilon\right)\, dx'\, d\xi  \notag\\
&=\int_{\Omega_s}
\left(E'(W^\epsilon)-E'\left(\widetilde{W}_{-X}^{\eps}\right)\right)\partial_t W^\epsilon
-E''\left(\widetilde{W}_{-X}^{\eps}\right)
\left(W^\epsilon-\widetilde{W}_{-X}^{\eps}\right)
\partial_t\widetilde{W}_{-X}^{\eps}
\, dx'\, d\xi  \notag\\[0.5ex]
&=\int_{\Omega_s}
\left(E'(W)-E'\left(\widetilde{W}_{-X}^{\eps}\right)\right)
\left(\sigma W^\epsilon_\xi+\epsilon \Delta B(W^\epsilon)-\operatorname{div}F(W^\epsilon)\right)\, dx'\, d\xi \notag\\[0.5ex]
&\quad
-\int_{\Omega_s} E''\left(\widetilde{W}_{-X}^{\eps}\right)
\left(W^\epsilon-\widetilde{W}_{-X}^{\eps}\right)
\left(
-\dot{X}\,\partial_\xi\widetilde{W}_{-X}^{\eps}
+\sigma\partial_\xi\widetilde{W}_{-X}^{\eps}
+\epsilon \Delta B(\widetilde{W}_{-X}^{\eps})
-\operatorname{div}F\!\left(\widetilde{W}_{-X}^{\eps}\right)
\right)
\, dx'\, d\xi  \notag
\\[0.5ex]
&=\dot{X}\int_{\Omega_s}
E''\left(\widetilde{W}_{-X}^{\eps}\right)
\left(W^\epsilon-\widetilde{W}_{-X}^{\eps}\right)
\partial_\xi\widetilde{W}_{-X}^{\eps}
\, dx'\, d\xi  \notag
\\[0.5ex]
&\quad
+\int_{\Omega_s}
\sigma\Bigl(
\left(E'(W^\epsilon)-E'\!\left(\widetilde{W}_{-X}^{\eps}\right)\right)\partial_\xi W^\epsilon
-E''\left(\widetilde{W}_{-X}^{\eps}\right)
\left(W^\epsilon-\widetilde{W}_{-X}^{\eps}\right)
\partial_\xi\widetilde{W}_{-X}^{\eps}
\Bigr)
\, dx'\, d\xi  \notag
\\[0.5ex]
&\quad
-\int_{\Omega_s}
\operatorname{div}
\!\left(q\!\left(W^\epsilon|\widetilde{W}_{-X}^{\eps}\right)\right)
+E''\!\left(\widetilde{W}_{-X}^{\eps}\right)
\partial_\xi\widetilde{W}_{-X}^{\eps}
\,f_1(W^\epsilon\,\big|\,\widetilde{W}_{-X}^{\eps}) \, dx'\, d\xi \notag
\\[0.5ex]
&\quad
+\int_{\Omega_s} \sum_{i=2}^{n}
E''\!\left(\widetilde{W}_{-X}^{\eps}\right)
\partial_{x_i}\widetilde{W}_{-X}^{\eps}\,
f_i (W^\epsilon\,\big|\,\widetilde{W}_{-X}^{\eps})
\, dx'\, d\xi  \notag
\\[0.5ex]
&\quad
+\int_{\Omega_s}
\left(E'(W^\epsilon)-E'\!\left(\widetilde{W}_{-X}^{\eps}\right)\right)\epsilon \Delta B(W^\epsilon)
-E''\!\left(\widetilde{W}_{-X}^{\eps}\right)
\left(W^\epsilon-\widetilde{W}_{-X}^{\eps}\right)
\epsilon \Delta B(\widetilde{W}_{-X}^{\eps})
\, dx'\, d\xi . \label{eq:timegrowthofE}
\end{align}
Choosing $E(W^\epsilon) = \frac{(W^\epsilon)^2}{2}$  along with ${W^\epsilon|}_{\pm \infty}$ =   ${\widetilde{W}^\epsilon|}_{\pm \infty}$, \eqref{eq:timegrowthofE} reduces to 
\begin{align*}
\dfrac{\mathrm{d}}{\mathrm{d}t}&\int_{\Omega_s}
\frac{{|W^\epsilon-\widetilde{W}_{-X}^{\eps}|}^2}{2}\, dx'\, d\xi \\
&=
\dot{X}
\int_{\Omega_s}
\left(W^\epsilon-\widetilde{W}_{-X}^{\eps}\right)
\partial_{\xi}\widetilde{W}_{-X}^{\eps}
\, dx'\, d\xi 
+\sigma
\int_{\Omega_s}
\left(
\dfrac{{|W^\epsilon-\widetilde{W}_{-X}^{\eps}|}^2}{2}
\right)_{\xi}
\, dx'\, d\xi 
\\[0.5ex]
&
-
\int_{\Omega_s}
f_1 (W^\epsilon\,\big|\,\widetilde{W}_{-X}^{\eps})
\partial_{\xi}\widetilde{W}_{-X}^{\eps}
\, dx'\, d\xi 
+
\epsilon \int_{\Omega_s}
\left(W^\epsilon-\widetilde{W}_{-X}^{\eps}\right)
\Delta\!\left(B(W^\epsilon)-B(\widetilde{W}_{-X}^{\eps})\right)
\, dx'\, d\xi \\[0.5cm]
&  = \dot{X}
\int_{\Omega_s}
\left(W^\epsilon-\widetilde{W}_{-X}^{\eps}\right)
\partial_{\xi}\widetilde{W}_{-X}^{\eps}
\, dx'\, d\xi -
\int_{\Omega_s}
f_1 (W^\epsilon\,\big|\,\widetilde{W}_{-X}^{\eps})
\partial_{\xi}\widetilde{W}_{-X}^{\eps}
\, dx'\, d\xi \\[0.5cm]
& -\epsilon\int_{\Omega_s} \nabla \left(W^\epsilon -\widetilde{W}_{-X}^{\eps}\right). \nabla \left(B(W^\epsilon)-B(\widetilde{W}_{-X}^{\eps})\right)\, dx'\, d\xi \\[0.5cm]
& = \dot{X}
\int_{\Omega_s}
\left(W^\epsilon-\widetilde{W}_{-X}^{\eps}\right)
\partial_{\xi}\widetilde{W}_{-X}^{\eps}
\, dx'\, d\xi -
\int_{\Omega_s}
f_1 (W^\epsilon\,\big|\,\widetilde{W}_{-X}^{\eps})
\partial_{\xi}\widetilde{W}_{-X}^{\eps}
\, dx'\, d\xi \\[0.5cm]
& - \epsilon \int_{\Omega_s} B'(W^\epsilon)\left|\nabla(W^\epsilon-\widetilde{W}_{-X}^{\eps})\right|^2\, dx'\, d\xi - \epsilon \int_{\Omega_s} \left(B'(W^\epsilon)-B'(\widetilde{W}_{-X}^{\eps})\right)\nabla \widetilde{W}_{-X}^{\eps}.\nabla\left(W^\epsilon-\widetilde{W}_{-X}^{\eps}\right)\, dx'\, d\xi \\[0.5cm]
& = \dot{X}
\int_{{\Omega_s}}
\left(W^\epsilon-\widetilde{W}_{-X}^{\eps}\right)
\partial_{\xi}\widetilde{W}_{-X}^{\eps}
\, dx'\, d\xi -
\int_{{\Omega_s}}
f_1 (W^\epsilon\,\big|\,\widetilde{W}_{-X}^{\eps})
\partial_{\xi}\widetilde{W}_{-X}^{\eps}
\, dx'\, d\xi \\[0.5cm]
& - \epsilon \int_{\Omega_s} B'(W^\epsilon)\left|\nabla(W^\epsilon-\widetilde{W}_{-X}^{\eps})\right|^2\, dx'\, d\xi - \epsilon \int_{\Omega_s} \left(B'(W^\epsilon)-B'(\widetilde{W}_{-X}^{\eps})\right)\partial_\xi \widetilde{W}_{-X}^{\eps}\partial_\xi\left(W^\epsilon-\widetilde{W}_{-X}^{\eps}\right)\, dx'\, d\xi .
\end{align*}
After a change of variable $\xi \longrightarrow \xi +X(t)$, we obtain
 \begin{align}\label{eq:entropyestimate}
\dfrac{\mathrm{d}}{\mathrm{d}t}\int_{{\Omega_s}}&
\dfrac{\left|{W}_X^\epsilon-{\widetilde{W}}^\epsilon\right|^2}{2}\, dx'\, d\xi \nonumber\\
& \hspace{0.5cm}= \dot{X}
\int_{{\Omega_s}}
\left(W^\epsilon_X-\widetilde{W}^\epsilon\right)
\partial_{\xi}\widetilde{W}^\epsilon
\, dx'\, d\xi -
\int_{{\Omega_s}}
f_1 (W^\epsilon_X\,\big|\,\widetilde{W}^\epsilon)
\partial_{\xi}\widetilde{W}^\epsilon
\, dx'\, d\xi \nonumber\\[0.4cm]
&\hspace{0.5cm}- \epsilon \int_{\Omega_s} \left(B'(W^\epsilon_X)-B'(\widetilde{W}^\epsilon)\right)\partial_\xi\left(W^\epsilon_X-\widetilde{W}^\epsilon\right)\partial_\xi \widetilde{W}^\epsilon\, dx'\, d\xi -\epsilon \int_{\Omega_s} B'(W^\epsilon_X)\left|\nabla(W^\epsilon_X-\widetilde{W}^\epsilon)\right|^2\, dx'\, d\xi \nonumber\\[0.2cm]
&\hspace{0.5cm}:= \dot{X}\mathcal{Y}(W^\epsilon_X)+ \mathcal{B}(W^\epsilon_X)-\mathcal{G}(W^\epsilon_X),
\end{align}
where
\begin{align}
    &\mathcal{Y}(W^\epsilon_X) =\int_{\Omega_s} \left(W^\epsilon_X-\widetilde{W}^\epsilon\right)
\partial_{\xi}\widetilde{W}^\epsilon\, dx'\, d\xi \label{eq:forY},\\
&\mathcal{B}(W^\epsilon_X) = -\int_{\Omega_s} f_1(W^\epsilon_X|\widetilde{W}^\epsilon)\partial_\xi\widetilde{W}^\epsilon \, dx'\, d\xi -  \epsilon\int_{\Omega_s} \left(B'(W^\epsilon_X)-B'(\widetilde{W}^\epsilon)\right)\partial_\xi\left(W^\epsilon_X-\widetilde{W}^\epsilon\right)\partial_\xi \widetilde{W}^\epsilon \, dx'\, d\xi \label{eq:forB},\\
& \mathcal{G}(W^\epsilon_X) = \epsilon\int_{\Omega_s} B'(W_X^\epsilon)\left|\nabla(W^\epsilon_X-\widetilde{W}^\epsilon)\right|^2\, dx'\, d\xi .\label{eq:forG}
\end{align}
Now we rewrite the functionals \eqref{eq:forY}, \eqref{eq:forB}, \eqref{eq:forG} and $\dot{X}(t)$ in terms of the following variables similar to \cite{MR3953019},
\begin{align}\label{eq:transformation}
    z = \dfrac{1}{\delta}(W_--\widetilde{W}^\epsilon),\quad \mathcal{W} = (W^\epsilon_X-\widetilde{W}^\epsilon)o(z^{-1},\mathbf{I}).
\end{align}
In \eqref{eq:transformation}, $o$ is the composition operator and $\mathbf{I}$ is the identity operator on $\mathbb{T}^{d-1}$ to $\mathbb{T}^{d-1}$ . Note that
\begin{align}\label{eq:transformation2}
    \dfrac{dz}{d\xi} = -\dfrac{\partial_\xi\widetilde{W}^\epsilon}{\delta}.
\end{align}
Using the transformation \eqref{eq:transformation}, \eqref{eq:transformation2},  we rewrite $\dot{X}(t)$ and \eqref{eq:forY},  as follows
\begin{align}\label{eq:estimateY}
    \dot{X}(t)= M\int_{\mathbb{T}^{d-1}}\overline{\mathcal{W}}\,dx', \qquad \mathcal{Y}(W^\epsilon_X) = -\delta\int_{\mathbb{T}^{d-1}} \overline{\mathcal{W}}\, dx',
\end{align}
where $\overline{\mathcal{W}} = \int_0^1 \mathcal{W}\, dz$. 

For the bad terms $\mathcal{B}$ in \eqref{eq:entropyestimate}, we rewrite $\mathcal{B} := \mathcal{B}_1+\mathcal{B}_2$ with
\[
\mathcal{B}_1(W^\epsilon_X)
= -\int_{\Omega_s} f_1(W^\epsilon_X|\widetilde{W}^\epsilon)
\partial_\xi\widetilde{W}^\epsilon \, dx'\, d\xi
\]
and 
\[
\mathcal{B}_2(W^\epsilon_X)
= - \epsilon \int_{\Omega_s} \left(B'(W^\epsilon_X)-B'(\widetilde{W}^\epsilon)\right)
\partial_\xi\left(W^\epsilon_X-\widetilde{W}^\epsilon\right)
\partial_\xi \widetilde{W}^\epsilon \, dx'\, d\xi.
\]
In view of strict convexity of $f_1$, by \eqref{eq:bound_on_E}, \eqref{eq:transformation} and \eqref{eq:transformation2}, we estimate $\mathcal{B}_1$ as follows
\begin{align}
\mathcal{B}_1(W^\epsilon_X)
&= -\int_{\Omega_s} f_1(W^\epsilon_X|\widetilde{W}^\epsilon)
\partial_\xi\widetilde{W}^\epsilon \, dx'\, d\xi \notag\\
&\leq \delta{\|f''_1\|}_{L^{\infty}}
\int_{\mathbb{T}^{d-1}}\int_0^1 \mathcal{W}^2\, dx'\,dz.
\label{eq:estimateB_1}
\end{align}
Now for $\mathcal{B}_2$,  we proceed as follows
\begin{align}
\mathcal{B}_2(W^\epsilon_X)
&= - \epsilon \int_{\Omega_s} \left(B'(W^\epsilon_X)-B'(\widetilde{W}^\epsilon)\right)
\partial_\xi\left(W^\epsilon_X-\widetilde{W}^\epsilon\right)
\partial_\xi \widetilde{W}^\epsilon \, dx'\, d\xi 
\notag\\
&=  - \epsilon \int_{\Omega_s} B''(W^*)
\left(W^\epsilon_X-\widetilde{W}^\epsilon\right)
\partial_\xi\left(W^\epsilon_X-\widetilde{W}^\epsilon\right)
\partial_\xi \widetilde{W}^\epsilon \, dx'\, d\xi 
\notag\\
&\leq {\|B''\|}_{L^\infty}\delta
\int_{\mathbb{T}^{d-1}}\int_0^1
\mathcal{W}\mathcal{W}_z\,\dfrac{dz}{d\xi}\, dx'\, dz,
\label{eq:estimateB_21}
\end{align}
where $W^*$ lies between $W^\epsilon_X$ and $\widetilde{W}^\epsilon$.  

Using the properties of viscous shock (see \cite{MR4195742} for more details), we must have $|\partial_\xi\widetilde{W}^\epsilon|\leq C\delta^2$. Thus, it follows that 
\begin{align}\label{eq:fordz}
    \left|\dfrac{dz}{d\xi}\right| = \left|-\dfrac{\partial_\xi\widetilde{W}^\epsilon}{\delta}\right| \leq C\delta.
\end{align}
Therefore, using Young's inequality and \eqref{eq:fordz}, we further estimate \eqref{eq:estimateB_21} as
\begin{align}\label{eq:estimateB_2}
    \mathcal{B}_2(W^\epsilon_X) \leq C\delta^2{\|B''\|}_{L^\infty}\int_{\mathbb{T}^{d-1}}\int_0^1 \mathcal{W}^2\, dx'\,dz + \dfrac{\delta}{2}{\|B''\|}_{L^\infty}\int_{\mathbb{T}^{d-1}}\int_0^1 |\mathcal{W}_z|^2\,\dfrac{dz}{d\xi}\,dx'\,dz.
\end{align}
Before estimating the good term $\mathcal{G}$ in \eqref{eq:entropyestimate}, as defined in \eqref{eq:forG}, we recall the following result from \cite{MR4188324} with a minor modification. For a strictly smooth convex function $f_1$, there exist positive constants $C$ and $\delta_0\in (0,1)$ such that, for any $0<\delta<\delta_0$,  and $z\in[0,1]$, we have the following estimate.
\begin{align}\label{eq:estimateofdzdxi} 
\left|
\dfrac{1}{z(1-z)}\dfrac{dz}{d\xi}
-\dfrac{\delta}{2 \epsilon B'(W)}f_1''(W_-)
\right|
\le \dfrac{C}{\epsilon}\delta^2.
\end{align}
Using \eqref{eq:estimateofdzdxi} with the a priori bound  of $B'$ (see \eqref{eq:Bprime_bounds}), i.e.,  $km_0\leq B'\leq kM_0$, we estimate $\mathcal{G}(W^\epsilon_X)$ in \eqref{eq:entropyestimate} as follows
\begin{align}\label{eq:estimateG}
        \mathcal{G}(W^\epsilon_X) &= \epsilon\int_{\Omega_s} B'(W^\epsilon)\left|\nabla(W^\epsilon_X-\widetilde{W})\right|^2\, dx'\, d\xi \nonumber\\
        &= \epsilon\int_{\mathbb{T}^{d-1}}\int_\mathbb{R} B'(W)\left|\partial_\xi(W^\epsilon_X-\widetilde{W})\right|^2\, dx'\, d\xi + \sum_{i=2}^d\epsilon\int_{\mathbb{T}^{d-1}}\int_\mathbb{R} B'(W^\epsilon)\left|\partial_{x_i}(W^\epsilon_X-\widetilde{W})\right|^2\, dx'\, d\xi \nonumber\\
        & \geq k\epsilon m_0 \int_{\mathbb{T}^{d-1}}\int_0^1\left|\partial_z\mathcal{W}\right|^2\dfrac{dz}{d\xi}\,dx'\,dz+\sum_{i=2}^d k\epsilon m_0\int_{\mathbb{T}^{d-1}}\int_\mathbb{R} \left|\partial_{x_i}(\mathcal{W})\right|^2\,\dfrac{d\xi}{dz}\,dx'\,dz \nonumber\\
        & \geq k\epsilon m_0\left(\dfrac{\delta f_1''(W_-)}{2 \epsilon kM_0}-\dfrac{C}{\epsilon}\delta^2\right)\int_{\mathbb{T}^{d-1}}\int_0^1z(1-z)\left|\partial_z\mathcal{W}\right|^2\,dx'\,dz\nonumber\\
        &\hspace{2.9cm} +\sum_{i=2}^d k\epsilon m_0\left(\dfrac{\delta f_1''(W_-)}{2 \epsilon km_0}+\dfrac{C}{\epsilon}\delta^2\right)^{-1}\int_{\mathbb{T}^{d-1}}\int_0^1\dfrac{\left|\partial_{x_i}(\mathcal{W})\right|^2}{z(1-z)}\,dx'\,dz.
    \end{align}
Now using \eqref{eq:estimateY}, \eqref{eq:estimateB_1}, \eqref{eq:estimateB_2}, \eqref{eq:estimateofdzdxi}, and \eqref{eq:estimateG}  in \eqref{eq:entropyestimate}, we have the following inequality 
\begin{align}\label{eq:estimateofall1}
        &\dfrac{1}{(\|f''_1\|_{L^\infty}+C\delta\|B''\|_{L^\infty})}\dfrac{1}{\delta}\left(\dot{X}\mathcal{Y}(W^\epsilon_X)+B(W^\epsilon_X)-\mathcal{G}(W^\epsilon_X)\right) \nonumber\\[1ex]
        &\leq -\left(\int_{\mathbb{T}^{d-1}}\overline{\mathcal{W}}dx'\right)^2+ \int_{\mathbb{T}^{d-1}}\int_0^1 \mathcal{W}^2\, dx'\,dz\nonumber\\[2ex]
        & \quad+ \dfrac{1}{{\|f''_1\|}_{L^\infty}+C\delta}\left(\left(\dfrac{\delta{\|B''\|}_{L^\infty} f_1''(W_-)}{4\epsilon km_0}+\dfrac{C}{\epsilon}\delta^2\right)- \left(\dfrac{  m_0f_1''(W_-)}{2 M_0}-C\delta\right)\right)\int_{\mathbb{T}^{d-1}}\int_0^1z(1-z)\left|\partial_z\mathcal{W}\right|^2\,dx'\,dz\nonumber\\[0.5cm]
        &\quad-\dfrac{k\epsilon m_0}{\delta^2({\|f''_1\|}_{L^\infty}+C\delta)} \left(\dfrac{ f_1''(W_-)}{2\epsilon km_0}+\dfrac{C}{\epsilon}\delta\right)^{-1}\,\,\sum_{i=2}^{d-1}\int_{\mathbb{T}^{d-1}}\int_0^1 \dfrac{\left|\partial_{x_i} \mathcal{W}\right|^2}{z(1-z)}\,dx'\,dz\nonumber\\[0.4cm]
        & = -\left(\int_{\mathbb{T}^{d-1}}\overline{\mathcal{W}}\,dx'\right)^2+ \int_{\mathbb{T}^{d-1}} \overline{\mathcal{W}}^2\,dx'\,dz + \int_{\mathbb{T}^{d-1}}\left(\int_0^1 \mathcal{W}^2\, dz-\overline{\mathcal{W}}^2-\int_0^1\dfrac{1}{2}z(1-z)\left|\partial_z \mathcal{W}\right|^2\,dz\right)\,dx'\nonumber\\[0.3cm]
        & \quad -\dfrac{k\epsilon m_0}{\delta^2({\|f''_1\|}_{L^\infty}+C\delta)} \left(\dfrac{ f_1''(W_-)}{2\epsilon km_0}+\dfrac{C}{\epsilon}\delta\right)^{-1}\,\,\sum_{i=2}^{d-1}\int_{\mathbb{T}^{d-1}}\int_0^1 \dfrac{\left|\partial_{x_i} \mathcal{W}\right|^2}{z(1-z)}\,dx'\,dz\nonumber\\[0.3cm]
        &\quad+ \left(\dfrac{1}{2} + \dfrac{\left(\dfrac{\delta{\|B''\|}_{L^\infty} f_1''(W_-)}{4\epsilon km_0}+\dfrac{C}{\epsilon}\delta^2\right)- \left(\dfrac{  m_0f_1''(W_-)}{2 M_0}-C\delta\right)}{{\|f''_1\|}_{L^\infty}+C\delta}\right)\int_{\mathbb{T}^{d-1}}\int_0^1z(1-z)\left|\partial_z\mathcal{W}\right|^2\,dx'\,dz.
    \end{align}
Using the Poincaré-type inequality \eqref{eq: poincare_type_inequality}, we have
\begin{align*}
    \int_0^1\mathcal{W}^2\,dz-\overline{\mathcal{W}}^2-\int_0^1\dfrac{1}{2}z(1-z)\left|\partial_z \mathcal{W}\right|^2\,dz \leq 0,
\end{align*}
Now \eqref{eq:estimateofall1} becomes
    \begin{align}
        &\dfrac{1}{({\|f''_1\|}_{L^\infty}+C\delta{\|B''\|}_{L^\infty})}\dfrac{1}{\delta}\left(\dot{X}\mathcal{Y}(W^\epsilon_X)+\mathcal{B}(W^\epsilon_X)-\mathcal{G}(W^\epsilon_X)\right) \nonumber\\[1ex]
        & \leq  \underbrace{-\left(\int_{\mathbb{T}^{d-1}}\overline{\mathcal{W}}\,dx'\right)^2+ \int_{\mathbb{T}^{d-1}} \overline{\mathcal{W}}^2\,dx'\,dz- \dfrac{k\epsilon m_0\left(\dfrac{ f_1''(W_-)}{2\epsilon km_0}+\dfrac{C}{\epsilon}\delta\right)^{-1}}{\delta^2(\|f''_1\|_{L^\infty}+C\delta)}\sum_{i=2}^{d-1}\int_{\mathbb{T}^{d-1}}\int_0^1 \dfrac{\left|\partial_{x_i} \mathcal{W}\right|^2}{z(1-z)}\,dx'\,dz}_{R_1} \nonumber\\[1ex]
        & \underbrace{+ \left(\dfrac{1}{2} + \dfrac{\left(\dfrac{\delta{\|B''\|}_{L^\infty} f_1''(W_-)}{4\epsilon km_0}+\dfrac{C}{\epsilon}\delta^2\right)- \left(\dfrac{  m_0f_1''(W_-)}{2 M_0}-C\delta\right)}{{\|f''_1\|}_{L^\infty}+C\delta}\right)}_{R_{2}}\int_{\mathbb{T}^{d-1}}\int_0^1z(1-z)\left|\partial_z \mathcal{W}\right|^2\,dx'\,dz.
    \end{align}
Using the fact that $|z(1-z)|\leq \frac{1}{4}$ on $|z|\leq1$ , and the Poincaré inequality \cite{Evans} on $\mathbb{T}^{d-1}$, we estimate $R_1$ as follows

    \begin{align}
        R_1 &= -\left(\int_{\mathbb{T}^{d-1}}\overline{\mathcal{W}}\,dx'\right)^2+ \int_{\mathbb{T}^{d-1}} \overline{\mathcal{W}}^2\,dx'\,dz-\dfrac{4k\epsilon m_0\left(\dfrac{ f_1''(W_-)}{2\epsilon k m_0}+\dfrac{C}{\epsilon}\delta\right)^{-1}}{\delta^2({\|f''_1\|}_{L^\infty}+C\delta)}\sum_{i=2}^{d-1}\int_{\mathbb{T}^{d-1}}\int_0^1 \left|\partial_{x_i} \mathcal{W}\right|^2\,dx'\,dz\nonumber\\[2ex]
        & \leq \dfrac{1}{4\pi^2}\sum_{i=2}^{d-1}\int_{\mathbb{T}^{d-1}} \left|\partial_{x_i}\overline{\mathcal{W}}\right|^2\,dx'- \dfrac{4k\epsilon m_0\left(\dfrac{ f_1''(W_-)}{2\epsilon km_0}+\dfrac{C}{\epsilon}\delta\right)^{-1}}{\delta^2(\|f''_1\|_{L^\infty}+C\delta)}\sum_{i=2}^{d-1}\int_{\mathbb{T}^{d-1}}\int_0^1 \left|\partial_{x_i} \mathcal{W}\right|^2\,dx'\,dz\nonumber\\[1ex]
        & \leq \left(\dfrac{1}{4\pi^2}-\dfrac{4k\epsilon m_0\left(\dfrac{ f_1''(W_-)}{2\epsilon km_0}+\dfrac{C}{\epsilon}\delta\right)^{-1}}{\delta^2(\|f''_1\|_{L^\infty}+C\delta)}\right)\sum_{i=2}^{d-1}\int_{\mathbb{T}^{d-1}}\int_0^1 \left|\partial_{x_i} \mathcal{W}\right|^2\,dx'\,dz.
    \end{align}
Since $m_0>0$ , and $f_1''(W_-)>0$, by choosing $\delta_0=\delta_1$ sufficiently small, we obtain $R_1\leq0$. Moreover, $R_2\leq0$ if and only if
\begin{align}\label{eq:R2r}
   R_{2}:= \dfrac{C}{\epsilon}\delta^2+\left(\dfrac{\|B''\|_{L^\infty}f_1''(W_-)}{4\epsilon km_0}+\dfrac{3C}{2}\right)\delta + \dfrac{1}{2}\left({\|f_1''\|}_{L^\infty}- \dfrac{m_0}{M_0}f_1''(W_-)\right)\leq0.
\end{align}
Thus, by  Assumption \ref{assumption2}, and choosing sufficiently small $\delta_0=\delta_2$ for a fixed $\epsilon$,  \eqref{eq:R2r} follows immediately. Hence, we get the contraction for $W^\epsilon$ in $L^2(\Omega_s)$ by choosing $\delta_0=\min\{\delta_1, \delta_2\}$. For the decay estimate \eqref{eq:decayinW}, the proof is almost identical to that of Theorem 1.1 in  \cite{MR4876608}. Thus, we omit the details here. Now it remains to show the decay estimate of the function $\dot{X}(t)$. Observe that using \eqref{eq:shiftfunction} , and \eqref{eq:decayinW} , and applying  Cauchy-Schwarz inequality, one can obtain the estimate
\begin{align*}
    |\dot{X}(t)|&\leq \dfrac{M}{\delta}{\|W^\epsilon-\widetilde{W}_{-X}^\epsilon\|}_{L^2({\Omega_s})}{\|\partial_\xi\widetilde{W}^\epsilon\|}_{L^2(\mathbb{R})}\\
    & = O(t^{-\frac{1}{4}}),
\end{align*}
which completes the proof of Lemma \ref{lem:forW}.
\end{proof}
\begin{remark}\label{rem:forviscousshock}
   It is noteworthy to highlight that the results obtained in Lemma \ref{lem:existenceofviscouswave}, and  \ref{lem:forW}   are of independent interest beyond their role in the analysis of the full system \eqref{eq:homogeneous_KK_viscos_2}. In particular, the existence of the viscous shock profile and the time-asymptotic decay of the perturbation $(W^\epsilon-\widetilde{W}^\epsilon)$ provide a self-contained stability result for the viscous scalar conservation laws of the form \eqref{eq:equationforW}-\eqref{eq:initialdataforW} with nonlinear viscosity. This extends the existing results for the scalar viscous conservation laws with linear viscosity \cite{MR4876608, MR3953019}.
\end{remark}
\subsection{Proof of Theorem \ref{main_theorem_2}}
In this Section, we establish the $L^2$-decay of the perturbation  $(u_i^\epsilon-\widetilde{u}_i^\epsilon), \,i=1, 2, \ldots, n$ by exploiting the corresponding $L^2$-decay estimate for $(W^\epsilon-\widetilde{W}^\epsilon)$ obtained in Lemma \ref{lem:forW}. Recall that the initial data $\mathbf{U}_0^\eps\in \widetilde{\mathcal U}$. Moreover, the variables $\theta_i^\eps$, $i=2,\ldots,n$, satisfy the transport equations \eqref{eq:ratio_transport_equation_main}. Since the initial data for the ratios $\theta_i^{\epsilon}:=u^\epsilon_i/u_1^\eps=\gamma_i, \, i=2, 3, \ldots, n$ are constants, it follows that $\theta_i^\epsilon$ remain constant along the viscous flow for all time $t\in (0,T]$. Consequently,
\begin{align}\label{eq:fortheta}
\theta^\epsilon_i(x,t)\equiv \gamma_i, \qquad i=2,\ldots,n, \quad \forall\, (x,t)\in \Omega_s\times(0,T].
\end{align}
Since $g$ is a continuously differentiable function of $\bm{\theta}$, and $\bm{\theta}$ remains bounded, $g(\bm{\theta})$ is bounded. From \eqref{eq:ratio_transport_equation_main}, we further deduce for the viscous profile $\widetilde{\theta}_i^\epsilon(\xi)$ that $\partial_\xi \widetilde{\theta}_i^\epsilon = 0$. Therefore, $\widetilde{\theta}_i^\epsilon$ remains constant, i.e.,
\begin{align}\label{eq:viscoustheta}
\widetilde{\theta}_i^\epsilon(\xi) = \gamma_i, \qquad i=2,\ldots,n, \quad \forall\, \xi \in \mathbb{R}
\end{align}
Now, using the reconstruction formula \eqref{eq:reconstruction_formula}, and the fact that $g$ is bounded, we deduce that
\begin{align}\label{eq:differenceinu_1}
    \left|u_1^\epsilon - \widetilde{u}_1^\epsilon\right| \leq  \dfrac{1}{|g|}\left|W^\epsilon - \widetilde{W}^\epsilon\right|
\end{align}
Therefore, using Lemma \ref{lem:forW}, and \eqref{eq:differenceinu_1}, we have the $L^2$ decay estimate for $(u_1^\epsilon-\widetilde{u}_1^\epsilon)$ as
\begin{equation}\label{eq:decayinu_1}
    \begin{aligned}
     {\|u_1^\epsilon(\cdot, t)-\widetilde{u}_1^\epsilon(\cdot-\sigma t-X(t))\|}_{L^2(\Omega_s)} &\leq C {\|W^\epsilon(\cdot, t)-\widetilde{W}^\epsilon(\cdot-\sigma t-X(t))\|}_{L^2(\Omega_s)} \\
     & = O(t^{-\frac{1}{4}})
 \end{aligned}
\end{equation}
Now it remains to establish the decay estimate for $u_i^\epsilon-\widetilde{u}_i^\epsilon, \, i=2, 3, \ldots, n$. For this, using \eqref{eq:fortheta}, \eqref{eq:viscoustheta} and  \eqref{eq:decayinu_1}, we obtain
\begin{equation}
    \begin{aligned}
    {\|u_i^\epsilon(\cdot, t) - \widetilde{u}_i^\epsilon(\cdot-\sigma t-X(t))\|}_{L^2(\Omega_s)} &= \gamma_i {\|u_1^\epsilon(\cdot, t) - \widetilde{u}_1^\epsilon(\cdot-\sigma t-X(t)))\|}_{L^2(\Omega_s)}, \quad i = 2,\ldots,n\\
    &= O(t^{-\frac{1}{4}}).
\end{aligned}
\end{equation}
This completes the proof of Theorem \ref{main_theorem_2}.
\section{Conclusions and future outlook}\label{sec: conclusions}
We established the global well-posedness of weak solutions to the Cauchy problem for a wide class of $n\times n$ multi-dimensional hyperbolic systems. We achieved this by introducing a novel viscous approximation, which enabled the use of a compactness framework to extract the strong limit of the approximating sequence. Through this approximation, we transformed the viscous system into a single scalar equation for the nonlinear variable $V^\eps$, coupled with $(n-1)$ transport equations for $\theta_i^\eps$. This unique structure allowed us to derive uniform a priori estimates independent of the diffusion coefficient $\epsilon$. By utilizing these estimates, we proved the global-in-time existence and uniqueness of weak solutions for the inviscid system.

Furthermore, we proved that the proposed viscous approximation admits a viscous shock profile. Using the relative entropy method, we analyzed the $L^2$ time-decay behavior of viscous shock waves associated with the viscous system. In particular, under suitable assumptions on the initial data and an additional structural condition on the state space, we proved the asymptotic $L^2$ time-decay of the viscous shock waves of the approximation.

An important outcome of this work is the construction of a positively invariant domain for a broad class of hyperbolic systems, which enables us to design positivity-preserving numerical schemes, especially for thin film applications. Looking forward, it would be interesting to determine whether uniqueness for the constructed weak solutions holds for less regular initial data. A natural extension is to generalize our approach to systems where the flux differs in each spatial direction. Additionally, we aim to make use of the special structure of the viscous system to investigate the stability and time-asymptotic behavior of composite wave patterns, including viscous contact waves.
\bibliographystyle{abbrv}
\bibliography{citation}
\end{document}